\documentclass[11pt]{amsart}

\makeatletter
\usepackage{amssymb}
\usepackage{enumitem}
\usepackage{booktabs}
\usepackage{colortbl}
\usepackage{array}
\usepackage{xcolor}
\usepackage[margin=1.2in]{geometry}
\usepackage[skins,breakable]{tcolorbox}
\usepackage{tikz}
\usetikzlibrary{arrows.meta}
\usepackage[mathcal]{euscript}

\definecolor{deepblue}{RGB}{47,62,85}
\definecolor{midblue}{RGB}{65,99,137}
\definecolor{urlpurple}{RGB}{98,74,131}
\definecolor{palepurple}{RGB}{236,230,246}
\colorlet{keyframe}{urlpurple!65!white}
\colorlet{keyfill}{palepurple!28!white}

\usepackage{hyperref}
\hypersetup{colorlinks=true, linkcolor=deepblue, citecolor=midblue, urlcolor=urlpurple,
  pdftitle={Classification of bipartite observables III: simplicity, mixtures and quantum error correction},
  pdfauthor={Eduardo Due\~nez and Jos\'e Iovino}}

\RequirePackage{amssymb}
\RequirePackage{mathrsfs}

\newcommand{\CC}{\mathbb{C}}

\newcommand{\FF}{\mathbb{F}}

\newcommand{\MM}{\mathbb{M}}
\newcommand{\NN}{\mathbb{N}}

\newcommand{\RR}{\mathbb{R}}

\newcommand{\cP}{\mathcal{P}}

\newcommand{\cR}{\mathcal{R}}

\makeatletter
\RequirePackage{amsmath}
\RequirePackage{amssymb}
\RequirePackage{mathtools}

\DeclareMathOperator{\spn}{span}
\newcommand{\one}{\mathbf{1}}

\makeatletter
\RequirePackage{amsmath}

\providecommand{\tp}{\operatorname{tp}}

\makeatletter
\def\di@Ind#1#2{#1\setbox0=\hbox{$#1x$}\kern\wd0\hbox to 0pt{\hss$#1\mid$\hss}%
  \smash{\lower.9\ht0\hbox to 0pt{\hss$#1\smile$\hss}}\kern\wd0}
\newcommand{\ind}{\mathrel{\mathpalette\di@Ind{}}}
\makeatother

\makeatletter
\RequirePackage[nosort]{cleveref}

\newcommand{\di@crefnames}[3]{\crefname{#1}{#2}{#3}\Crefname{#1}{#2}{#3}}
\di@crefnames{section}{Section}{Sections}
\di@crefnames{subsection}{Section}{Sections}
\di@crefnames{subsubsection}{Section}{Sections}
\di@crefnames{chapter}{Chapter}{Chapters}
\di@crefnames{part}{Part}{Parts}
\di@crefnames{appendix}{Appendix}{Appendices}
\di@crefnames{table}{Table}{Tables}
\di@crefnames{figure}{Figure}{Figures}
\di@crefnames{theorem}{Theorem}{Theorems}
\di@crefnames{keytheorem}{Theorem}{Theorems}
\di@crefnames{proposition}{Proposition}{Propositions}
\di@crefnames{corollary}{Corollary}{Corollaries}
\di@crefnames{lemma}{Lemma}{Lemmas}
\di@crefnames{fact}{Fact}{Facts}
\di@crefnames{conjecture}{Conjecture}{Conjectures}
\di@crefnames{definition}{Definition}{Definitions}
\di@crefnames{example}{Example}{Examples}
\di@crefnames{remark}{Remark}{Remarks}
\di@crefnames{question}{Question}{Questions}
\di@crefnames{axiom}{Axiom}{Axioms}
\di@crefnames{notation}{Notation}{Notations}
\di@crefnames{exercise}{Exercise}{Exercises}

\AtBeginDocument{%
}
\makeatletter
\def\@currname{di-theorem-rudin}\def\@curroptions{}
\RequirePackage{amsthm}
\RequirePackage{aliascnt}

\def\di@rudin@parent{section}
\relax

\newtheoremstyle{rudin}%
  {\topsep}{\topsep}{\itshape}{}{\bfseries}{.}{ }%
  {\thmname{#1}\thmnumber{ #2}\thmnote{ (#3)}}
\newtheoremstyle{rudindef}%
  {\topsep}{\topsep}{\upshape}{}{\bfseries}{.}{ }%
  {\thmname{#1}\thmnumber{ #2}\thmnote{ (#3)}}

\theoremstyle{rudin}
\ifx\di@rudin@parent\@empty
  \newtheorem{theorem}{Theorem}
\else
  \newtheorem{theorem}{Theorem}[\di@rudin@parent]
\fi
\newcommand{\DiSharedTheorem}[2]{%
  \newaliascnt{#1}{theorem}%
  \newtheorem{#1}[#1]{#2}%
  \aliascntresetthe{#1}%
}
\DiSharedTheorem{proposition}{Proposition}
\DiSharedTheorem{corollary}{Corollary}
\DiSharedTheorem{lemma}{Lemma}
\DiSharedTheorem{fact}{Fact}
\theoremstyle{rudindef}
\DiSharedTheorem{definition}{Definition}
\DiSharedTheorem{example}{Example}
\DiSharedTheorem{remark}{Remark}
\DiSharedTheorem{question}{Question}
\makeatletter

\theoremstyle{rudin}
\DiSharedTheorem{keytheorem}{Theorem}
\tcolorboxenvironment{keytheorem}{%
  breakable, colback=keyfill, colframe=keyframe, boxrule=0.4pt, arc=1pt,
  left=6pt, right=6pt, top=2pt, bottom=2pt,
  before skip=10pt, after skip=10pt}

\theoremstyle{rudindef}

\newcommand{\forphysicist}{\par\medskip\noindent{\color{urlpurple!45}\rule{\linewidth}{0.4pt}}\par\smallskip\noindent\textcolor{deepblue}{\textbf{For the physicist or operator theorist.}}}
\newcommand{\forlogician}{\par\medskip\noindent\textcolor{urlpurple}{\textbf{For the logician.}}}
\makeatother

\title[Classification of bipartite observables III]{Classification of bipartite observables III: simplicity, mixtures and quantum error correction}
\author{Eduardo Due\~nez}
\address{Department of Mathematics, The University of Texas at San Antonio, One UTSA
Circle, San Antonio, TX 78249, USA}
\email{Eduardo.Duenez@utsa.edu}

\author{Jos\'e Iovino}
\address{Department of Mathematics, The University of Texas at San Antonio, One UTSA
Circle, San Antonio, TX 78249, USA}
\email{jose.iovino@utsa.edu}
\date{\today}
\thanks{This version: \today. The most recent version of this paper is available at \url{https://utsa-quantum-math.github.io/resources/bipartite-observables-3-simplicity/}; it may supersede the version deposited on arXiv.}
\subjclass[2020]{Primary 03C45, 81P73; Secondary 03C50, 03C66, 05C80, 81P45}
\keywords{Simple theories, tree property of the second kind, forking, Kim--Pillay theorem, Fraïssé limits, Rado graph, Pauli group, stabilizer codes, quantum error correction, Pauli channels, bipartite observables, real-valued logic}

\begin{document}

\begin{abstract}
  This is the third paper in a series that reads the dividing lines of Shelah’s classification theory as properties of bipartite quantum observables.
  Here we study simplicity through two countable structures with simple unstable theories: the Rado graph, and the symplectic space of labels of Pauli strings of finite support, in which addition records the products of the strings and the symplectic form records their commutation.
  In the second structure, forking characterizes the logical errors of stabilizer codes:
  an error undetectable by the checks of a code is logical exactly when its type over the stabilizer labels and a complete set of logical-operator labels forks over the stabilizer labels, and the independence theorem becomes a rule for gluing syndromes.
  A two-valued relation between countable sets of labels defines a diagonal bipartite observable, whose real-valued theory describes Alice’s probability mixtures of labels and their expected responses at Bob’s probes.
  Mixing changes the classification:
  if the relation has the independence property, this theory has the tree property of the second kind.
  For the observable of the symplectic form, actual mixtures of checks witness this property for the response itself;
  for the observable of the Rado graph, every array that witnesses it for a quantifier-free formula in Bob’s probe contains idealized preparations, which lie outside the original structure.
\end{abstract}

\maketitle

\setcounter{tocdepth}{1}
\tableofcontents

\section{Introduction}

In Part~I~\cite{Duenez-Iovino:2026a} we studied when the expectation value of a bipartite observable does not depend on the order in which two independently prepared quantum systems, Alice’s and Bob’s, are idealized, and in Part~II~\cite{Duenez-Iovino:2026b} we studied NIP, the absence of an infinite sequence of Alice’s preparations on which Bob’s probes realize every finite pattern of high and low values at a fixed positive gap.
Here we turn to simplicity.
Among complete first-order theories, the simple ones contain the stable ones, and simplicity and NIP do not imply one another:
a simple theory is NIP exactly when it is stable.
Simplicity excludes a formula and an infinitely branching tree of its instances whose branches are consistent while, for some fixed integer $k\ge2$, no $k$ children of any node hold together.
The standard simple unstable structure is the Rado graph, the countable homogeneous graph whose finite induced subgraphs include a copy of every finite graph;
it is the almost sure isomorphism type of the random graph on countably many vertices, in which each edge is chosen independently with probability one half \cite{Fraisse:1954a,Rado:1964,Erdos-Renyi:1963}.

Quantum error correction supplies a second simple unstable structure.
A Pauli string is a tensor product of identities, bit flips and phase flips on individual qubits, the two-level systems of quantum information.
Up to a phase, a Pauli string is determined by a label in a vector space over the two-element field $\FF_2$;
products of strings correspond to sums of labels, and two strings commute or anticommute according to the value at their labels of a nondegenerate alternating form $\omega$, the symplectic form.
On countably many qubits, the labels of the strings that act nontrivially on only finitely many qubits form a vector space $V$ of countably infinite dimension over $\FF_2$, and the structure $(V,\omega)$, which we call the Pauli group, is the Fraïssé limit of the finite alternating spaces over $\FF_2$ (see \cref{sec:pauli}).
Its theory, like that of the Rado graph, is simple and unstable.
Both facts are known (see the historical remarks), and we include proofs because the independence relations they compute are used below (see \cref{thm:rado-simple,thm:symp-simple}).
In both structures, nonforking independence is the independence given by algebraic closure:
in the Rado graph two sets are independent over a third when they meet only inside it, and in the Pauli group when their spans, together with the third set, meet only in its span.

A stabilizer code stores quantum information in the common eigenvalue-$1$ space of commuting Pauli strings, its checks, and the syndrome of an error is the list of bits that records which checks anticommute with it.
Errors with zero syndrome are undetectable, and they are of two kinds:
stabilizers, which act trivially on the encoded information, and logical errors, which change it.
Forking separates them.
Let $S$ be the set of labels of the stabilizers of a code on finitely many qubits, and let $L$ be a set of logical-operator labels such that $S\cup L$ spans the labels of the undetectable errors;
then an undetectable error $e$ is logical exactly when $\tp(e/S\cup L)$ forks over $S$ (see \cref{prop:logical}).
The base matters:
all logical errors have the same type over $S$, and this type does not fork over the empty set.
Two further translations complete the dictionary of \cref{sec:dictionary}.
Clifford unitaries, which map Pauli strings to Pauli strings under conjugation, implement the homogeneity of the Pauli group, and the existence of Clifford encoding circuits for stabilizer codes follows (see \cref{thm:witt-clifford,cor:encoding}).
The independence theorem becomes the gluing of syndromes:
two syndrome prescriptions on sets of checks whose spans, together with a common set, meet only in the span of the common set are realized by a single error as soon as they agree there (see \cref{prop:glue}).
These are statements about dependence and compatibility, not a rule for decoding.

The second half of the paper passes from relations to observables.
A two-valued relation $\varphi$ between countable sets of labels defines a diagonal observable, which reads $\varphi(a,b)$ when Alice’s register holds the label $a$ and Bob’s holds the label $b$.
Only the diagonal of Alice’s density operator enters, so her preparations are probability mixtures of labels, and the response of a mixture at a probe of Bob is the probability that the relation holds between a label drawn from the mixture and an independent draw from the probe (see \cref{sec:observables}).
For the Pauli observable, given by the symplectic form, Alice’s label is a check, Bob’s is an error, and the probes are the Pauli channels on countably many qubits;
depolarizing noise is among them, and it is concentrated on errors of infinite support (see \cref{prop:channels}).
For the Rado observable, given by the edge relation, the probes are the random sets of vertices.
We study the real-valued theory of the two-sorted structure of Alice’s mixtures and Bob’s probes, with the response as its predicate, in Keisler’s model theory of real-valued structures~\cite{Keisler:2023}.

Mixing changes the classification.
If the relation has the independence property, the theory of mixtures and probes has the tree property of the second kind, TP$_2$:
there is an array of conditions on a probe whose rows are uniformly inconsistent, no two conditions of a row holding together at a fixed gap, while every choice of one condition from each row is consistent (see \cref{thm:collapse}).
The witnessing formula is the difference of two responses.
On a finite set of labels, the difference between the responses of the uniform mixtures on a half of the set and on its complement reaches $1$ exactly when the probe answers $1$ on the half and $0$ on the complement, and two halves that meet in a quarter of the set give differences that cannot both exceed $1/2$;
the independence property makes the choices from distinct rows compatible, and compactness gives an infinite array in an elementary extension.
Thus the simple unstable theories of the label structures do not describe their mixed response structures.
The two observables are then separated by where a witnessing array can be found:
among the actual preparations, the mixtures of the original structure, or only with idealized preparations, the new elements of the sort of preparations in an elementary extension.

In the statement of the main results, labels on $n$ qubits are regarded as elements of the Pauli group through the first $n$ of countably many qubits, and the pure responses of a diagonal observable are the responses of single labels.
A check \emph{fires} at an error when it anticommutes with it.
A diagonal observable has \emph{freely independent} pure responses when every pattern of answers to Alice’s labels is a pure probe;
examples are the Rado observable and the observable of the random bipartite graph, the Fraïssé limit of the finite bipartite graphs with the two sides as two sorts.

\begin{keytheorem}[Main results]\label{thm:main}
  \leavevmode
  \begin{enumerate}[label=\textup{(\Alph*)},leftmargin=*]
  \item
    The theories of the Rado graph and of the Pauli group $(V,\omega)$ are simple and unstable (\cref{thm:rado-simple,thm:symp-simple}).
  \item
    Let $n$ be a positive integer, let $S$ be the set of labels of the stabilizer group of a stabilizer code on $n$ qubits, and let $L$ be a set of logical operators of the code.
    An undetectable error is a logical error exactly when its type over $S\cup L$ forks over $S$;
    all logical errors have the same type over $S$, and this type does not fork over the empty set (\cref{thm:forking-pauli} and \cref{prop:logical}).
    Every isomorphism between finite-dimensional substructures of the Pauli group extends to an automorphism induced by a Clifford unitary on finitely many qubits, and every stabilizer code has an encoding circuit made of Clifford gates
    (\cref{thm:witt-clifford} and \cref{cor:encoding}).
    If two sets of checks are independent over a common set, then any two syndromes, each realized by some error on its own set of checks, that agree on the common set are realized by a single error (\cref{prop:glue}).
    The probes of the Pauli observable are the Pauli channels,
    and the depolarizing noise of every positive rate on countably many qubits is concentrated on the labels of infinite support, which lie outside $V$
    (\cref{prop:channels}).
  \item
    For every diagonal observable whose pure responses have the independence property,
    and in particular for the Rado observable and the Pauli observable, the real-valued theory of the observable has the tree property of the second kind, for the formula that is the difference of two responses.
    Every diagonal observable whose real-valued theory is simple is NIP (\cref{thm:collapse}).
  \item
    For the Pauli observable, the indicator of the syndrome sector of every nonzero syndrome of finitely many linearly independent checks is,
    on pure probes, the difference of the responses of two uniform mixtures of checks.
    The response itself has the tree property of the second kind, witnessed by actual preparations:
    for blocks of checks that are linearly independent all together, the conditions that a given check is the first of its block to fire form the rows of a witnessing array
    (\cref{prop:sector} and \cref{thm:pauli-tp2}).
  \item
    For every diagonal observable with freely independent pure responses, and in particular for the Rado observable, the conditions on a probe given by the instances of a fixed quantifier-free formula with actual mixtures as parameters contain no uniformly $k$-inconsistent sequence, for any integer $k\ge2$.
    Hence, for these observables, every array that witnesses the tree property of the second kind of the formula in~(C) contains idealized preparations (\cref{thm:free,thm:free-qf}, and \cref{cor:rado}).
  \end{enumerate}
\end{keytheorem}

The distinction between~(D) and~(E) comes from the relations among pure responses.
For the Pauli observable, the arbitrary-support error labels form a compact group, and Alice’s pure responses become its characters after an affine change of scale, where a character is a continuous homomorphism into $\{\pm1\}$ (see \cref{prop:channels}).
Products of characters remain characters, so averaging over the finite group generated by independent checks expresses the indicator of a nonzero syndrome sector, on pure probes, as a difference of two uniform-mixture responses (see \cref{prop:sector}).
This identity parallels the stabilizer expression for a syndrome projector \cite{Gottesman:1997}, but requires linear independence of the checks rather than their commutation.
A uniform mixture on the coset of one check modulo the span of its predecessors consequently has response $1$ at a pure probe exactly when that check is the first in its block to fire;
in the bit-flip repetition code, whose checks compare neighboring qubits of a chain, the first check to fire marks the first domain wall of a bit flip, the first boundary between flipped and unflipped qubits (see \cref{exa:repetition}).
For the Rado observable, pure responses are coordinate functions on the whole Cantor cube, and responses to actual mixtures depend on a probe only through its coordinate marginals.
A gliding-hump argument then separates the weights of a sequence of conditions into a common finite part and nearly disjoint finite blocks, and one probe nearly satisfies any prescribed finite number of conditions of a suitable subsequence (see \cref{thm:free,thm:free-qf}).
The Pauli observable thus detects linear compatibility among syndrome bits through actual mixtures,
while the Rado observable forces the parameters of a witnessing array beyond the original preparations.

\paragraph{Organization of the paper.}
\Cref{sec:simple} recalls simple theories and the Kim--Pillay theorem, and \cref{sec:pauli} treats the Rado graph and the Pauli group.
\Cref{sec:dictionary} is the dictionary between the Pauli group and stabilizer codes.
\Cref{sec:observables} introduces diagonal observables, their probe spaces and their real-valued theories, \cref{sec:collapse} proves the tree property of the second kind for mixed preparations, and \cref{sec:pauli-tp2,sec:free} locate its witnesses for the Pauli and the Rado observables.
\Cref{sec:affine} compares mixtures with direct integrals in affine logic, and \cref{sec:questions} collects questions.

\section{Simple theories and the Kim--Pillay theorem}\label{sec:simple}

The independence theorem addresses a compatibility problem:
two nonforking extensions of the same type over a model, to parameter sets independent over that model, admit a common nonforking extension.
For Pauli labels, types specify syndrome bits and linear relations, and this compatibility is expressed by the symplectic form and linear span.
We first recall the model-theoretic formulation, including the tree configurations excluded by simplicity and the Kim--Pillay characterization by an independence relation.
In \cref{sec:pauli} we verify this characterization for the Rado graph and the symplectic space of Pauli labels.

To fix the setting, we work in a monster model $\MM$ of a complete first-order theory $T$:
a model of very large cardinality in which every set of formulas in finitely many variables with parameters from a subset of smaller cardinality is satisfied whenever it is finitely satisfiable,
and in which every elementary map between two such subsets, that is, every map under which each tuple and its image satisfy the same formulas, extends to an automorphism.
These subsets are called small, and from now on all sets of parameters are assumed to be small.
The type $\tp(a/C)$ of a tuple $a$ over a set $C$ is the set of formulas with parameters from $C$ that $a$ satisfies.
For sets $B$ and $D$ we write $BD$ for $B\cup D$, and, identifying a tuple $a$ with the set of its entries, we write $aB$ for the union of the set of its entries with $B$.
For a finite sequence $s\in\NN^{<\NN}$ of natural numbers and
$j\in\NN$, we write $s^\frown j$ for the sequence obtained by appending $j$ to $s$, and for an infinite sequence $\eta\in\NN^\NN$ and $n\in\NN$, we write $\eta\restriction n$ for the sequence of the first $n$ terms of $\eta$.
Two finite sequences are incomparable when neither is an initial segment of the other.

\begin{definition}\label{def:dividing}
  Let $\varphi(x,y)$ be a formula, where $x$ and $y$ are finite tuples of variables.
  \begin{enumerate}[label=\textup{(\arabic*)}]
  \item
    For an integer $k\ge2$, a sequence $(b_j)_{j\in\NN}$ of tuples of $\MM$ of the length of $y$ is \emph{$k$-inconsistent for $\varphi$} if for all $j_1<\dots<j_k$ in $\NN$ there is no $x$ with $\varphi(x,b_{j_1})\wedge\dots \wedge\varphi(x,b_{j_k})$.
  \item
    The formula $\varphi$ has the \emph{tree property} if, for some
    $k\ge2$, there is a tree $(b_s)_{s\in\NN^{<\NN}}$ such that for every $\eta\in\NN^\NN$ the set $\{\varphi(x,b_{\eta\restriction n}):n\in\NN\}$ is consistent and for every $s$ the sequence $(b_{s^\frown j})_{j\in\NN}$ is $k$-inconsistent for $\varphi$.
  \item
    The formula $\varphi$ has the \emph{tree property of the first kind} if there is a tree
    $(b_s)_{s\in\NN^{<\NN}}$ such that for every $\eta\in\NN^\NN$ the set $\{\varphi(x,b_{\eta\restriction n}):n\in\NN\}$ is consistent and for all incomparable $s,t\in\NN^{<\NN}$ the set $\{\varphi(x,b_s),\varphi(x,b_t)\}$ is inconsistent.
  \item
    The formula $\varphi$ has the \emph{tree property of the second kind}
    (TP$_2$) if, for some $k\ge2$, there is an array $(b_{ij})_{i,j\in\NN}$ such that every row $(b_{ij})_{j\in\NN}$ is $k$-inconsistent for $\varphi$ and for every $f\colon\NN\to\NN$ the set $\{\varphi(x,b_{if(i)}):i\in\NN\}$ is consistent.
  \item
    The theory $T$ is \emph{simple} if no formula has the tree property.
  \end{enumerate}
\end{definition}

Recall that the theory $T$ is stable when no formula $\varphi(x,y)$ has the order property, that is, when there are no tuples $a_i$ and $b_i$, for $i\in\NN$, such that, for all $i,j\in\NN$, the formula $\varphi(a_i,b_j)$ holds exactly when $i<j$.
It is NIP when no formula $\varphi(x,y)$ has the independence property, that is, when there are no tuples $b_i$, for $i\in\NN$, such that every subset of $\NN$ has the form $\{i\in\NN:\varphi(a,b_i)\text{ holds}\}$ for some tuple $a$.
Every stable theory is simple, and the tree property of the second kind implies the tree property;
by a theorem of Shelah, a theory is simple exactly when it has neither TP$_2$ nor the tree property of the first kind (see \cite[Theorem~III.7.11]{Shelah:1990a}).
A simple theory is stable if and only if it is NIP (see \cite{Shelah:1980b,Grossberg-Iovino-Lessmann:2002}), so a simple unstable theory has the independence property.
To prove simplicity we do not build trees;
instead, we exhibit an independence relation directly and show that it is nonforking independence, which we now recall.
For a formula $\varphi(x,y)$ and a tuple $b$ of $\MM$
of the length of $y$, the formula $\varphi(x,b)$ \emph{divides} over a set
$C$ when, for some integer $k\ge2$, there is a sequence
$(b_j)_{j\in\NN}$ with $b_0=b$ that is $k$-inconsistent for $\varphi$ and indiscernible over $C$, that is, such that any two of its increasing finite subsequences of the same length have the same type over $C$.
The formula \emph{forks} over $C$ when it implies a finite disjunction of formulas each of which divides over $C$;
a type forks over $C$ when one of its formulas does.
Nonforking independence is the relation under which $A\ind_C B$ holds, for small sets $A$, $B$ and $C$, when, for every finite tuple $a$ from $A$, the type $\tp(a/BC)$ does not fork over $C$.
By the Kim--Pillay theorem (see \cref{fact:kp}), every relation with the properties listed there coincides with nonforking independence.

We write $|T|$ for the number of formulas of the language of $T$.

\begin{fact}[Kim and Pillay]\label{fact:kp}
  Let $\ind$ be a ternary relation on small subsets of $\MM$, written $A\ind_C B$, and assume that it satisfies the following:
  \begin{enumerate}[label=\textup{(\roman*)},leftmargin=*]
  \item
    \emph{invariance} under automorphisms of $\MM$;
  \item
    \emph{symmetry}:
    the relation $A\ind_C B$ holds if and only if $B\ind_C A$ holds;
  \item
    \emph{monotonicity and transitivity}:
    the relation $A\ind_C BD$ holds if and only if both $A\ind_C B$ and $A\ind_{CB}D$ hold;
  \item
    \emph{finite character}:
    the relation $A\ind_C B$ holds if and only if $a\ind_C b$ holds for all finite tuples $a$ from $A$ and $b$ from $B$;
  \item
    \emph{local character}:
    for every finite tuple $a$ and every set $B$ there is $B_0\subseteq B$ with $|B_0|\le|T|$ and $a\ind_{B_0}B$;
  \item
    \emph{extension}:
    for every tuple $a$ and all sets $C\subseteq B$ there is $a'$ with $\tp(a'/C)=\tp(a/C)$ and $a'\ind_C B$;
  \item
    \emph{independence theorem over models}:
    if $M$ is a small elementary substructure of $\MM$, $B\ind_M D$, $a_1\ind_M B$, $a_2\ind_M D$ and $\tp(a_1/M)=\tp(a_2/M)$, then there is $a$ with $\tp(a/MB)=\tp(a_1/MB)$, $\tp(a/MD)=\tp(a_2/MD)$ and $a\ind_M BD$.
  \end{enumerate}
  Then the theory $T$ is simple and the relation $\ind$ coincides with nonforking independence (see \cite[Theorem~4.2]{Kim-Pillay:1997}).
\end{fact}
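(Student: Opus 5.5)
The plan is to show the two inclusions between $\ind$ and nonforking independence separately. Simplicity then follows from local character, via the standard criterion that $T$ is simple exactly when forking has local character.

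\emph{Step 1: $\ind$ implies nonforking.} First I will prove a key claim. Suppose $a\ind_C b$ and $(b_j)_{j\in\NN}$ is $C$-indiscernible with $b_0=b$. Then there is $a'$ with $a'b_j\equiv_C ab$ for every $j$.
\begin{itemize}
\item To prove it, stretch the sequence by compactness to a $C$-indiscernible sequence $(b_\alpha)_{\alpha<\lambda}$ with $\lambda$ much larger than $2^{|T|+|C|}$.
\item Use extension and invariance to build, by induction on $\alpha$, realizations of $\tp(a/Cb)$ that are $\ind$-independent over $C$ from earlier parameters. Monotonicity and transitivity let the successive independence statements be concatenated, and local character keeps the bases of size at most $|T|+|C|$.
\item Then apply Erd\H{o}s--Rado to extract a sequence of pairs $(a_\alpha b_\alpha)$ that is indiscernible over $C$. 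The independence of $a_\alpha$ from the other $b_\beta$ lets one a single $a'$ be chosen by compactness.
\end{itemize}
Given the claim, a formula $\varphi(x,b)\in\tp(a/Cb)$ cannot divide over $C$, since $a'$ realizes all the $\varphi(x,b_j)$. To upgrade non-dividing to nonforking, suppose $\tp(a/Cb)$ implied a finite disjunction of dividing formulas $\psi_i(x,d_i)$. Use extension to replace $a$ by $a'\equiv_{Cb}a$ with $a'\ind_C bd_1\dots d_n$. Then $a'$ satisfies some $\psi_i(x,d_i)$ while $a'\ind_C d_i$, contradicting non-dividing.

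\emph{Step 2: simplicity.} By Step 1 and local character~(v), for every finite $a$ and every $B$ there is $B_0\subseteq B$ with $|B_0|\le|T|$ such that $\tp(a/B)$ does not fork over $B_0$. Nonforking therefore has local character, and $T$ cannot have the tree property. If some $\varphi$ had it, a branch of a tree of height $|T|^+$ would give a type dividing over every small subset of its domain.

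\emph{Step 3: nonforking implies $\ind$.} This is where the independence theorem~(vii) enters, and I expect it to be the main obstacle. Suppose $\tp(a/Cb)$ does not fork over $C$ but $a\not\ind_C b$.
\begin{itemize}
\item First reduce to a model. Choose $M\supseteq C$ by extension with $M\ind_C ab$, and use transitivity to transfer the failure to $a\not\ind_M b$. In the simple theory, $\tp(a/Mb)$ does not fork over $M$ by the symmetry and transitivity of forking.
\item Next build an $\ind$-Morley sequence $(b_j)$ in $\tp(b/M)$: an $M$-indiscernible sequence with $b_j\ind_M b_{<j}$, obtained from extension and Erd\H{o}s--Rado as in Step 1.
\item By Kim's lemma for simple theories, $\bigcup_j p(x,b_j)$ is consistent, where $p=\tp(a/Mb)$. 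This holds because nonforking along one independent indiscernible sequence implies consistency along all of them.
\item Repeated use of the independence theorem over $M$ then shows that the set of $x$ with $x\ind_M b_j$ for all $j$ meets $\bigcup_j p(x,b_j)$ only if $p$ itself has a realization $\ind$-independent from $b$ over $M$. The delicate point is to turn this into the contradiction, by showing that $\bigcup_j p(x,b_j)$ would force some formula of $p$ to divide: an $\ind$-dependent type cannot be amalgamated in infinitely many independent copies, by a pigeonhole on the failure witnessed at a finite stage.
\end{itemize}
Symmetry~(ii) and finite character~(iv) let everything be reduced to finite tuples throughout. With both inclusions, $\ind$ coincides with nonforking independence.
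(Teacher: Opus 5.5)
\paragraph{The gap in Step~1.}
Step~1 cannot work with the tools you list there, and Step~2 fails with it. The relation that holds for all triples satisfies (i)--(vi) in every theory: for local character take $B_0=\emptyset$, and for extension take $a'=a$. Step~1 uses only extension, invariance, monotonicity and transitivity, local character, Erd\H{o}s--Rado and compactness. It would therefore prove your key claim for this relation too, and there the claim is false. In the theory of an infinite set, take $C=\emptyset$, $a=b$ and $(b_j)$ distinct; no $a'$ equals every $b_j$. Step~2 would then make every theory simple, dense linear order included.

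The sentence that fails is your last bullet. Knowing that each $a_\alpha$ is $\ind$-independent from the other $b_\beta$ gives no common realization of the conjugates $p(x,b_\alpha)$ of $p(x,b)=\tp(a/Cb)$. Producing one is exactly what the independence theorem~(vii) does, and it has to be used here, over a model. A correct Step~1 has two parts.
\begin{enumerate}[label=\textup{(\arabic*)}]
\item
  Let $N$ be a model with $a\ind_N b$, and let $(b_j)$ be $N$-indiscernible with $b_0=b$ and $b_j\ind_N b_{<j}$. Applying (vii) inductively with $B=b_{<j}$ and $D=b_j$ gives realizations of $\bigcup_{i\le j}p(x,b_i)$ that are independent from $b_{\le j}$ over $N$. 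Compactness then gives a realization of $\bigcup_jp(x,b_j)$.
\item
  Reduce an arbitrary $C$-indiscernible sequence to this case.
  Stretch it beyond a regular $\kappa>|T|$.
  Local character for $b_\kappa$ over $Cb_{<\kappa}$, base monotonicity (a consequence of (iii)) and indiscernibility give $\gamma<\kappa$ such that the tail $(b_i)_{i\ge\gamma}$ is $\ind$-Morley over $C'=Cb_{<\gamma}$.
  Extension gives a model $N\supseteq C'$ with $N\ind_{C'}(b_i)_{i\ge\gamma}$, and this keeps the tail Morley over $N$.
  Erd\H{o}s--Rado makes the tail $N$-indiscernible without changing its type over $C'$.
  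Extension and transitivity then make a conjugate of $a$ over $C$ independent over $N$ from the first element of the tail, and part~(1) applies.
\end{enumerate}

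\paragraph{Step~3 and the converse inclusion.}
Step~3 has the opposite defect. Its last bullet is not an argument. The claim before it is trivially true and says nothing, because by invariance no realization of $p(x,b_j)$ is independent from $b_j$ over $M$ when $a\ind_M b$ fails. This direction needs neither (vii), nor a model, nor Kim's symmetry of forking; local character along a Morley sequence is enough.
\begin{enumerate}[label=\textup{(\arabic*)}]
\item
  Let $(b_i)_{i<\kappa}$, with $\kappa>|T|$ regular, be $C$-indiscernible with $b_0=b$ and $b_i\ind_C b_{<i}$.
\item
  If $\tp(a/Cb)$ does not divide over $C$, some $a^*$ realizes $\bigcup_ip(x,b_i)$.
\item
  By Ramsey and compactness you may take the sequence indiscernible over $Ca^*$. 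This preserves its type over $C$ and the condition $a^*b_i\equiv_C ab$.
\item
  Local character and base monotonicity give $\gamma<\kappa$ with $a^*\ind_{Cb_{<\gamma}}b_\gamma$.
\item
  Together with $b_\gamma\ind_C b_{<\gamma}$, symmetry and transitivity give $a^*\ind_C b_\gamma$. Since $a^*b_\gamma\equiv_C ab$, invariance gives $a\ind_C b$.
\end{enumerate}

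The paper gives no proof of its own and cites Kim--Pillay. The point to take away is where the axioms are spent. Axiom~(vii) is what makes $\ind$ imply non-dividing, and hence what yields simplicity. The converse inclusion already follows from the other axioms.
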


\section{The two paradigms}\label{sec:pauli}

The Rado graph and the Pauli labels supply two concrete realizations of the independence calculus.
In each case we identify a Fraïssé limit and use quantifier elimination to describe types by graph relations and linear-algebraic data, respectively.
We prove simplicity and instability for both theories and compute their algebraic closures.
Every set is algebraically closed in the Rado graph, whereas in the Pauli group the algebraic closure of a set is its linear span.
In both cases these closures determine nonforking independence.
The theories of their mixed response structures, whose different behavior depends on the relations among the labels, are compared in \cref{sec:collapse,sec:pauli-tp2,sec:free}.

\subsection{The Rado graph}
The class of finite graphs has the amalgamation property, since two graphs that share an induced subgraph can be glued along it without new edges.
Its Fraïssé limit is the Rado graph $(R,E)$, with vertex set $R$ and edge relation $E$, and a countable graph is isomorphic to the Rado graph if and only if it has the \emph{extension property}:
for all finite disjoint sets $R_1$ and $R_0$ of vertices, there are infinitely many vertices outside $R_1\cup R_0$ adjacent to every vertex of $R_1$ and to no vertex of $R_0$ \cite{Fraisse:1954a,Rado:1964}.
The theory of $(R,E)$ is $\aleph_0$-categorical and eliminates quantifiers, so the type of a tuple over a set $D$ is determined by the equalities and edges among its entries and between its entries and the elements of $D$.
By saturation, the monster model $\MM$ of the theory of $(R,E)$ has the extension property for small disjoint sets $R_1$ and $R_0$ of vertices.
In particular, the algebraic closure of a small set $D$ of vertices of $\MM$ is $D$ itself:
for a vertex $v$ outside $D$, apply the extension property to its neighbors in $D$ and the remaining vertices of $D$.
This gives infinitely many vertices outside $D$ with the same edges to $D$ as $v$.
Quantifier elimination gives them the same type over $D$, so $v$ is not algebraic over $D$.

The relation $E(x,y)$ has the independence property, since, by compactness, it suffices to realize every finite pattern:
for every positive integer $n$, all distinct vertices $a_1,\dots,a_n$ of the Rado graph and every set of indices among $1,\dots,n$, the extension property gives a vertex adjacent to $a_i$ exactly when $i$ is in the set.
Hence the theory of the Rado graph is unstable.

\begin{theorem}\label{thm:rado-simple}
  The theory of the Rado graph is simple, and, for small sets $A$, $B$ and $C$ of vertices of the monster model $\MM$ of this theory, nonforking independence $A\ind_C B$ holds if and only if $A\cap B\subseteq C$.
\end{theorem}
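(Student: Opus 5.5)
We define $A\ind_C B$ to mean $A\cap B\subseteq C$ and verify the hypotheses (i)--(vii) of the Kim--Pillay theorem (\cref{fact:kp}). Once they hold, \cref{fact:kp} shows at once that the theory is simple and that this relation is nonforking independence.

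Most of the axioms are immediate from the definition. Invariance and symmetry are clear. For monotonicity and transitivity, $A\cap BD\subseteq C$ holds if and only if $A\cap B\subseteq C$ and $A\cap D\subseteq CB$. For the reverse direction, an element of $A\cap D$ lies in $C\cup B$; if it lies in $B$, then it lies in $A\cap B\subseteq C$. Finite character holds because intersections are detected by single elements. For local character, given a finite tuple $a$ and a set $B$, take $B_0=a\cap B$. This set is finite, so $|B_0|\le|T|$, and $a\cap B\subseteq B_0$.

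For extension, let $C\subseteq B$ and let $a$ be a tuple. We may assume $C$ contains the entries of $a$ that lie in $C$. We realize $\tp(a/C)$ by a tuple $a'$ whose entries outside $C$ are distinct new vertices outside $B$, one at a time. Each new vertex must carry the prescribed edges to $C$ and to the earlier new vertices, and prescribed non-edges to all other vertices of $B$. This is possible by the extension property of $\MM$ for small disjoint sets. Equalities and edges among the entries of $a'$ and to $C$ match those of $a$, so quantifier elimination gives $\tp(a'/C)=\tp(a/C)$. Moreover $a'\cap B\subseteq C$.

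The independence theorem (vii) is the step needing care, and we handle it by quantifier elimination and the extension property. Let $M$ be a model, and suppose $B\cap D\subseteq M$, $a_1\cap B\subseteq M$, $a_2\cap D\subseteq M$, and $\tp(a_1/M)=\tp(a_2/M)$. The common type $p$ over $M$ fixes which coordinates equal which elements of $M$, and the edges among the coordinates and to $M$. Since $\tp(a_1/M)=\tp(a_2/M)$, the coordinates of $a_1$ and $a_2$ in $M$ coincide, and we keep them.

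For each remaining coordinate we choose a new vertex outside $M\cup B\cup D$. Its edges to $M$ and to the earlier new vertices are those prescribed by $p$. Its edges to $B\setminus M$ are copied from $a_1$, and its edges to $D\setminus M$ are copied from $a_2$. These prescriptions are consistent because $(B\setminus M)\cap(D\setminus M)=\emptyset$, so no vertex receives conflicting demands.

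We must also rule out coordinates of $a_1$ or $a_2$ lying outside $M$ but inside $B$ or $D$. This cannot happen, because $a_1\cap B\subseteq M$ and $a_2\cap D\subseteq M$; this is exactly the point where those hypotheses are used. The extension property, applied one coordinate at a time, realizes these choices. Quantifier elimination then gives $\tp(a/MB)=\tp(a_1/MB)$ and $\tp(a/MD)=\tp(a_2/MD)$, and by construction $a\cap BD\subseteq M$.
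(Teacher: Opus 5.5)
Your proof is correct and follows the paper's argument essentially step by step. You use the same relation $A\cap B\subseteq C$, the same set-algebra checks of (i)--(v), and the same appeal to the extension property of $\MM$ and quantifier elimination for extension and for the independence theorem, where $B\cap D\subseteq M$ rules out conflicting edge prescriptions. One minor slip: you pick a distinct new vertex for each coordinate outside the base, which breaks the equalities among repeated entries of $a$; as the paper does, pick one new vertex for each distinct entry outside the base.
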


\begin{proof}
  We verify the hypotheses of \cref{fact:kp} for the relation $A\ind_C B$ defined by $A\cap B\subseteq C$.
  The first five are set algebra, and extension and the independence theorem follow from the extension property of $\MM$.
  Invariance, symmetry and finite character are immediate.
  Monotonicity and transitivity hold together, since the conditions $A\cap B\subseteq C$ and $A\cap D\subseteq C\cup B$ say jointly that every element of $A\cap(B\cup D)$ lies in $C$.
  For local character, given a finite tuple $a$ and a set $B$, the finite set $B_0=B\cap a$ satisfies $a\cap B\subseteq B_0$, which is the relation $a\ind_{B_0}B$.

  In the two constructions below, we temporarily group the coordinates whose entries lie in the base before the remaining coordinates, and restore the original order at the end.
  For two tuples with the same type over the base, use the same coordinate permutation.
  Repeated entries retain their equality pattern:
  one new vertex is chosen for each distinct entry outside the base.
  For extension, let $a$ and $C\subseteq B$ be given, and write $a=(c,a_0)$, where $c$ lists the entries of $a$ in $C$.
  The type of $a_0$ over $C$ prescribes only the equalities and edges of $a_0$ within itself and to $C$.
  Hence the extension property of $\MM$, applied to the distinct entries in turn, with
  $B\setminus C$ added to the set of vertices to which nonadjacency is prescribed, gives a tuple $a_0'$ disjoint from $B$ with the same type over $C$, and the tuple $a'=(c,a_0')$ satisfies $a'\cap B\subseteq C$.

  For the independence theorem, let $M$, $B$, $D$, $a_1$ and $a_2$ be as in \cref{fact:kp}(vii).
  Since $\tp(a_1/M)=\tp(a_2/M)$, the two tuples have the same entries in $M$ and the same remaining equalities;
  write $a_\ell=(m,a_\ell')$ for $\ell=1,2$, where the tuple $m$ lists these common entries and each $a_\ell'$ is disjoint from $M$.
  The hypotheses $a_1\ind_MB$ and $a_2\ind_MD$ say that $a_1'$ is disjoint from $B$ and $a_2'$ from $D$.
  We look for a tuple $x$, disjoint from $M\cup B\cup D$, whose equalities and edges within itself and edges to
  $M$ are those of $a_1'$ (equivalently, of $a_2'$), whose edges to $B\setminus M$ are those of $a_1'$, and whose edges to $D\setminus M$ are those of $a_2'$.
  These prescriptions concern disjoint sets of vertices, because $B\cap D\subseteq M$;
  the extension property of $\MM$ therefore provides such an $x$, and by quantifier elimination the tuple $a=(m,x)$ is as required.
\end{proof}

\subsection{Pauli strings}
The Hilbert space of a qubit is $\CC^2$, and the bit flip and the phase flip are two of the Pauli matrices, which anticommute:
\[
  X=\begin{pmatrix}0&1\\1&0\end{pmatrix},\qquad
  Z=\begin{pmatrix}1&0\\0&-1\end{pmatrix},\qquad XZ=-ZX.
\]
For a positive integer $n$, a Pauli string on $n$ qubits is, up to a phase, a tensor product
$X^{x_1}Z^{z_1}\otimes\dots\otimes X^{x_n}Z^{z_n}$ with $x_k$ and $z_k$ in the two-element field $\FF_2$ for $k=1,\dots,n$, and it is labeled by the vector $(x,z)\in\FF_2^{2n}$, where $x=(x_1,\dots,x_n)$ and $z=(z_1,\dots,z_n)$;
products of strings correspond to sums of labels.
For a label $v=(x,z)$ we write $P_v$ for the Hermitian string $\mathrm{i}^{N(v)}X^{x_1}Z^{z_1}\otimes\dots\otimes X^{x_n}Z^{z_n}$, where $N(v)$ is the number of indices $k$ with $x_k=z_k=1$;
it satisfies $P_v^2=I$, where $I$ is the identity, and the label $(1,1)$ of one qubit gives the third Pauli matrix $P_{(1,1)}=\mathrm{i}XZ=Y$.
Two strings either commute or anticommute, and
\[
  P_{(x,z)}P_{(x',z')}=(-1)^{\omega((x,z),(x',z'))}P_{(x',z')}P_{(x,z)},
  \qquad \omega((x,z),(x',z'))=x\cdot z'+z\cdot x',
\]
where $x\cdot z'=\sum_{k=1}^n x_kz'_k$, computed in $\FF_2$.
The form $\omega$ is bilinear, alternating and nondegenerate on $\FF_2^{2n}$.
Recall that in error correction an error is a Pauli string that acts on an encoded state as noise, and a check is a Pauli string whose measurement detects errors;
we identify each Pauli string, up to its phase, with its label.
The \emph{syndrome} of an error $e$ with respect to checks $g_1,\dots,g_m$, for a positive integer $m$, is the vector $(\omega(g_1,e),\dots,\omega(g_m,e))$:
when the checks commute pairwise, measuring them on a state that they fix, after the error has acted on it, reveals which of them anticommute with the error.
On countably many qubits we write $V$ for the direct sum $\bigoplus_{k\in\NN}\FF_2^2$, with the form given by the same formula, now a finite sum for each pair of vectors;
its elements are the labels of the Pauli strings that act nontrivially on only finitely many qubits.
The label map identifies the group of finite-support Pauli strings modulo its scalars $\{\pm I,\pm\mathrm{i}I\}$ with the group $(V,+)$, and the form $\omega$ records commutation;
we call the structure $(V,\omega)$ the \emph{Pauli group}.

\subsection{The Pauli group as a Fraïssé limit}
In the language with $+$, $0$ and a binary relation for $\omega(x,y)=1$, substructures are $\FF_2$-subspaces with the restricted form.
Finite alternating spaces over $\FF_2$, the finite vector spaces over $\FF_2$ with an alternating bilinear form, amalgamate:
if $W_0\subseteq W_1$ and $W_0\subseteq W_2$, write $W_1=W_0\oplus W_1'$ and $W_2=W_0\oplus W_2'$, and give $W_0\oplus W_1'\oplus W_2'$ the bilinear form that extends the forms of $W_1$ and $W_2$ and vanishes on $W_1'\times W_2'$ and on $W_2'\times W_1'$.
Every finite alternating space embeds in a nondegenerate one, and by Witt’s theorem \cite{Artin:1957} every isometry between subspaces of a finite nondegenerate alternating space extends to an isometry of the whole space.
A back-and-forth argument along symplectic bases therefore shows that $(V,\omega)$ is the Fraïssé limit of the finite alternating spaces over $\FF_2$.
Its theory is $\aleph_0$-categorical and eliminates quantifiers, so the type of a tuple $a$ over a set $D$ is determined by the linear relations of $a$ over $\spn(D)$ and the values of $\omega$ on $\spn(aD)$.
We show after \cref{lem:generic} that the algebraic closure of a set is its span.

\begin{lemma}\label{lem:generic}
  Let $n$ be a positive integer.
  If $a_1,\dots,a_n\in V$ are linearly independent and $\varepsilon_1,\dots,\varepsilon_n\in\FF_2$, then there is $b\in V$ with $\omega(a_i,b)=\varepsilon_i$ for $i=1,\dots,n$.
\end{lemma}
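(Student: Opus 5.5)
The plan is to reduce the statement to nondegeneracy of $\omega$ on a finite-dimensional symplectic subspace of $V$, followed by elementary linear algebra. Since $a_1,\dots,a_n$ have finite support, there is a positive integer $N$ such that all of them lie in the subspace $V_N=\bigoplus_{k<N}\FF_2^2$, the labels supported on the first $N$ qubits. The restriction of $\omega$ to $V_N$ is the standard form $x\cdot z'+z\cdot x'$ on $\FF_2^{2N}$, which is nondegenerate, as recorded earlier in the paper. I will look for $b$ inside $V_N$.

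The key step is to consider the linear map $\Phi\colon V_N\to\FF_2^n$ given by $\Phi(b)=(\omega(a_1,b),\dots,\omega(a_n,b))$ and to show that it is surjective. Equivalently, I will show that its dual map is injective. A linear functional on $\FF_2^n$ is given by a vector $(\lambda_1,\dots,\lambda_n)$, and it vanishes on the image of $\Phi$ exactly when
\[
  \omega\Bigl(\sum_{i=1}^n\lambda_ia_i,\;b\Bigr)=0
\]
for every $b\in V_N$. By nondegeneracy of $\omega$ on $V_N$, this forces $\sum_i\lambda_ia_i=0$. Linear independence of the $a_i$ then gives $\lambda_i=0$ for every $i$. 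Hence no nonzero functional annihilates the image of $\Phi$, so the image is all of $\FF_2^n$, and some $b\in V_N\subseteq V$ realizes the prescribed values $\varepsilon_1,\dots,\varepsilon_n$.

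An alternative route, if a more explicit construction is preferred, uses the nondegenerate pairing to write $V_N\cong V_N^*$ through $b\mapsto\omega(\cdot,b)$. One extends $a_1,\dots,a_n$ to a basis of $V_N$, takes the dual basis functionals, and chooses $b$ as the preimage of $\sum_i\varepsilon_i a_i^*$. There is no real obstacle in either route. The only point requiring care is to carry out the argument in a finite-dimensional nondegenerate subspace, since nondegeneracy of $\omega$ on a proper subspace $\spn(a_1,\dots,a_n)$ may fail and the dimension count needs finite dimension. Truncating to the first $N$ qubits handles both issues.
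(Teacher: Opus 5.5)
Your proof is correct and follows essentially the same route as the paper: the paper also places $a_1,\dots,a_n$ in the labels of the first $m$ qubits, where $\omega$ is nondegenerate, deduces that the functionals $\omega(a_i,\cdot)$ are linearly independent, and concludes that $b\mapsto(\omega(a_i,b))_{i\le n}$ is onto. Your dual-map argument simply spells out why nondegeneracy together with linear independence yields this surjectivity.
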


\begin{proof}
  The vectors $a_i$ lie in the span $W$ of the labels of $X$ and $Z$ on the first $m$ qubits, for a large enough integer $m$, and the form is nondegenerate on $W$.
  Hence the functionals $\omega(a_i,\cdot)$ restricted to $W$ are linearly independent, and the linear map $b\mapsto(\omega(a_i,b))_{i\le n}$ from $W$ to $\FF_2^n$ is onto.
\end{proof}

By \cref{lem:generic} and compactness, the formula $\omega(x,y)=1$ has the independence property:
every syndrome with respect to finitely many linearly independent checks is the syndrome of some error.
Hence the theory of $(V,\omega)$ is unstable.

We also use the finite-extension property of the Fraïssé limit.
Every finite alternating extension of a finite subspace of $V$ embeds into $V$ over the subspace:
embed the extension into $V$, then use homogeneity to match the given copy of the subspace.
For each pair of finite alternating spaces, this property is expressed by a first-order extension sentence, so it holds in the monster model $\MM$ as well.
Consequently, for every small subspace $D$ of $\MM$ and every positive integer $n$, the conditions describing an alternating extension $D\oplus\spn(x_1,\dots,x_n)$ by $n$ independent vectors are finitely satisfiable:
each finite set of conditions concerns only a finite-dimensional subspace of $D$ and a finite alternating extension of it.
Saturation realizes these conditions over $D$.
In particular, every linear functional $f$ on $D$ has the form $\omega(\cdot,b)$ on $D$ for some $b\in\MM$.

The same property computes algebraic closure.
Let $A$ be a small subset of $\MM$, let $D=\spn(A)$, and let $v$ be a vector of $\MM$ outside $D$.
For a positive integer $n$, prescribe an alternating form on $D\oplus\spn(x_1,\dots,x_n)$ by requiring
\[
 \omega(d,x_i)=\omega(d,v)\quad(d\in D,\ 1\le i\le n),\qquad
 \omega(x_i,x_j)=0\quad(1\le i,j\le n).
\]
The finite-extension property and saturation realize this space over $D$ by vectors linearly independent over $D$.
These vectors are distinct, and quantifier elimination gives each of them the type of $v$ over $D$.
Hence the type of $v$ over $A$ has infinitely many realizations, and $v$ is not algebraic over $A$;
since every element of $\spn(A)$ is the value of a term at elements of $A$, the algebraic closure of $A$ is $\spn(A)$.

\begin{theorem}\label{thm:symp-simple}
  The theory of $(V,\omega)$ is simple, and, for small subsets $A$, $B$ and $C$ of the monster model $\MM$ of this theory, nonforking independence $A\ind_C B$ holds if and only if $\spn(AC)\cap\spn(BC)=\spn(C)$.
\end{theorem}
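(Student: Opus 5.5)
We verify the hypotheses of \cref{fact:kp} for the relation $A\ind_C B$ defined by $\spn(AC)\cap\spn(BC)=\spn(C)$, in the same way as in the proof of \cref{thm:rado-simple}.
Linear span now plays the role that plain sets played for the Rado graph.
The relation is always an inclusion $\supseteq$, so it really says $\spn(AC)\cap\spn(BC)\subseteq\spn(C)$.

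\emph{Invariance} holds because automorphisms preserve spans, and \emph{symmetry} is immediate.
\emph{Finite character} holds because a vector in $\spn(AC)\cap\spn(BC)$ is a finite combination of elements of $A$, $B$ and $C$.
Only finitely many elements of $C$ enter, and a relation $u=c+a'=c'+b'$ with $a'\in\spn(a)$, $b'\in\spn(b)$ for finite tuples $a,b$ already witnesses failure for those tuples.

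\emph{Monotonicity and transitivity} reduce to linear algebra: the modular law for subspaces.
Write $U=\spn(AC)$.
Suppose $U\cap\spn(BC)=\spn(C)$ and $\spn(ACB)\cap\spn(DCB)=\spn(CB)$.
Take $u\in U\cap\spn(BCD)$.
Then $u\in\spn(CB)$, hence $u\in U\cap\spn(BC)=\spn(C)$.
The converse direction is similar, using $\spn(ABC)=U+\spn(BC)$ and the modular law.

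\emph{Local character} holds with $B_0$ finite.
For a finite tuple $a$, the space $\spn(a)$ is finite-dimensional.
Choose a finite $B_0\subseteq B$ such that $\spn(aB_0)\cap\spn(B)$ is spanned by $B_0$ together with $\spn(aB_0)\cap\spn(B)$'s intersection basis.
Concretely, pick $B_0$ finite spanning a complement witness: $\spn(aB)\cap\spn(B)$ restricted to $\spn(a)+\spn(B)$ has $\spn(a)\cap\spn(B)$ finite-dimensional.
Let $B_0$ contain enough of $B$ to span $\spn(a)\cap\spn(B)$.
Then $\spn(aB_0)\cap\spn(B)=\spn(B_0)$.

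\emph{Extension} is a realization argument.
Given $a$ and $C\subseteq B$, decompose $a$, after a linear change of coordinates in the style of the Rado proof, into a part in $\spn(C)$ and a tuple $a_0$ linearly independent over $\spn(C)$.
This change is harmless since types over $C$ determine linear relations over $\spn(C)$.
Prescribe an alternating extension $\spn(B)\oplus\spn(x)$.
On $\spn(C)$ and $x$, copy the form values of $a_0$.
Choose the values $\omega(b,x_i)$ for $b$ in a complement of $\spn(C)$ in $\spn(B)$ arbitrarily, say $0$.
By the finite-extension property and saturation, this space is realized by vectors independent over $\spn(B)$.
Quantifier elimination gives the correct type over $C$, and independence over $\spn(B)$ yields $\spn(a'C)\cap\spn(B)=\spn(C)$.

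The \emph{independence theorem} is the main step and needs the most care.
Let $M$, $B$, $D$, $a_1$, $a_2$ be as in \cref{fact:kp}(vii).
We may replace $B$ and $D$ by $\spn(MB)$ and $\spn(MD)$, so $B\cap D=M$ with $M$ a subspace.
Reduce as before to tuples $a_\ell'$ independent over $M$, which carry the same linear and form data over $M$.
By $a_1\ind_MB$, the tuple $a_1'$ is independent over $B$, and similarly $a_2'$ is independent over $D$.

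We want $x$ independent over $B+D$ with the following data:
\begin{itemize}
\item on $\spn(Mx)$, the form data common to $a_1'$ and $a_2'$;
\item $\omega(b,x_i)=\omega(b,a_{1,i}')$ for $b\in B$;
\item $\omega(d,x_i)=\omega(d,a_{2,i}')$ for $d\in D$.
\end{itemize}
The key point is that these are consistent linear functionals on $B+D$.
Since $B\cap D=M$ and the two prescriptions agree on $M$ (as $\tp(a_1/M)=\tp(a_2/M)$), each pair glues to a well-defined functional on $B+D$.
Extend it to $\spn(B+D)$, which is $B+D$ itself.
We must also check that the form already existing on $B+D$ is unaffected: the prescription only involves pairs with some $x_i$.
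The finite-extension property and saturation then realize $x$ independent over $B+D$.
By quantifier elimination, $a=(m,x)$ has the required types over $MB$ and $MD$.
Independence over $B+D$ gives $\spn(aM)\cap\spn(BD)=M$, that is, $a\ind_M BD$.

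Once all the hypotheses are checked, \cref{fact:kp} yields both simplicity and the stated characterization of nonforking independence.
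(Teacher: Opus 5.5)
Your proposal is correct and follows essentially the same route as the paper. Both verify the Kim--Pillay axioms for $\spn(AC)\cap\spn(BC)=\spn(C)$, prove extension by realizing a tuple orthogonal to a complement of $\spn(C)$ in $\spn(BC)$, and prove the independence theorem by gluing the functionals $\omega(\cdot,a_{1,i})$ on $\spn(MB)$ and $\omega(\cdot,a_{2,i})$ on $\spn(MD)$ along their common domain $M$. Two things need tidying: the local-character paragraph is garbled and should state only the one-line check that $u+w\in\spn(B)$ with $u\in\spn(a)$ and $w\in\spn(B_0)$ forces $u\in\spn(a)\cap\spn(B)\subseteq\spn(B_0)$; and you need a compactness remark, as in the paper, to pass from finite to small tuples in extension and in the independence theorem.
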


\begin{proof}
  The proof follows that of \cref{thm:rado-simple}, with linear spans in place of the sets themselves:
  the first five axioms are linear algebra, and extension and the independence theorem come from the freedom to prescribe the form on a complement.
  We verify the hypotheses of \cref{fact:kp} for the relation $A\ind_C B$ defined by $\spn(AC)\cap\spn(BC)=\spn(C)$.
  Invariance and symmetry are immediate, and finite character holds because a vector of $\spn(AC)\cap\spn(BC)$ outside $\spn(C)$ is a combination of finitely many elements of $A$, $B$ and $C$.

  For monotonicity and transitivity, let $Q_1=\spn(AC)$, $Q_2=\spn(BC)$, $Q_3=\spn(BCD)$ and $Q_0=\spn(C)$;
  these subspaces satisfy $Q_2\subseteq Q_3$ and $\spn(ABC)=Q_1+Q_2$.
  It remains to be shown that $Q_1\cap Q_3=Q_0$ if and only if $Q_1\cap Q_2=Q_0$ and $(Q_1+Q_2)\cap Q_3=Q_2$.
  Suppose that $Q_1\cap Q_3=Q_0$.
  Since $Q_2\subseteq Q_3$, the intersection $Q_1\cap Q_2$ is contained in $Q_0$.
  If $u_1+u_2\in Q_3$ with $u_1\in Q_1$ and $u_2\in Q_2$, then the vector $u_1$ also lies in $Q_3$, hence in $Q_1\cap Q_3=Q_0\subseteq Q_2$;
  thus the sum lies in $Q_2$.
  Conversely, every $u\in Q_1\cap Q_3$ lies in $(Q_1+Q_2)\cap Q_3=Q_2$, and therefore in $Q_1\cap Q_2=Q_0$.

  Local character follows from finite dimension.
  Given a finite tuple $a$ and a set $B$, the space $\spn(a)\cap\spn(B)$ is finite, so it is contained in $\spn(B_0)$ for some finite $B_0\subseteq B$.
  If $u+w\in\spn(B)$ with $u\in\spn(a)$ and $w\in\spn(B_0)$, then the vector $u$ lies in $\spn(a)\cap\spn(B)\subseteq\spn(B_0)$.
  It follows that $\spn(aB_0)\cap\spn(B)=\spn(B_0)$, which is the relation $a\ind_{B_0}B$.

  For extension and the independence theorem, we first treat finite tuples.
  For a small tuple, each finite subset of the required type and of the span-independence conditions involves only finitely many coordinates;
  the finite-tuple constructions therefore make these conditions finitely satisfiable, and compactness and saturation realize them together.
  In either construction, choose a maximal subtuple linearly independent over the base span.
  Every omitted coordinate is a linear combination of the chosen coordinates and base parameters, and is restored by the same equation after the construction.
  In the independence theorem, equality of the two types over the base ensures that the same coordinate indices can be chosen and that these restoring equations agree.
  For extension, let $a$ and $C\subseteq B$ be given, with $a$ now linearly independent over $\spn(C)$.
  Choose a complement $B'$ of $\spn(C)$ in $\spn(BC)$.
  Take a tuple $x$ of new vectors of the same length as $a$ and prescribe the alternating form on $\spn(BC)\oplus\spn(x)$.
  On $\spn(xC)$ use the values prescribed by $\tp(a/C)$, and require $x$ to be orthogonal to $B'$.
  The consequence of the finite-extension property established before the theorem realizes this space over $\spn(BC)$ by a tuple $a'$.
  This tuple has the type of $a$ over $C$ and is linearly independent over $\spn(BC)$.
  Hence $\spn(a'C)\cap\spn(BC)=\spn(C)$, which is the relation $a'\ind_C B$.

  The independence theorem reduces to a gluing of functionals.
  Let $M$, $B$, $D$, $a_1$ and $a_2$ be as in \cref{fact:kp}(vii).
  As in the proof of extension, we may assume that $a_1$ is linearly independent over $M$;
  then so is $a_2$, and the hypotheses $a_1\ind_MB$ and $a_2\ind_MD$ make $a_1$ linearly independent over $\spn(MB)$ and $a_2$ over $\spn(MD)$.
  Let $n$ be the common length of $a_1$ and $a_2$, and, for $\ell=1,2$ and $k=1,\dots,n$, let $a_{\ell,k}$ be the $k$th entry of $a_\ell$.
  For each $k$, the functionals $\omega(a_{1,k},\cdot)$ on $\spn(MB)$ and $\omega(a_{2,k},\cdot)$ on $\spn(MD)$ agree on $M$, because $\tp(a_1/M)=\tp(a_2/M)$.
  Since $B\ind_MD$, their domains meet in $\spn(MB)\cap\spn(MD)=M$, and the two functionals glue to a linear functional on $\spn(MBD)$.
  Realize over $\spn(MBD)$ a tuple $a$, linearly independent over $\spn(MBD)$, with these functionals and with the values of $\omega$ on $a$ given by $\tp(a_1/M)$.
  By quantifier elimination, the type of $a$ over $MB$ is that of $a_1$, and its type over $MD$ is that of $a_2$;
  the relation $a\ind_MBD$ holds by construction.
\end{proof}

In both structures, then, nonforking independence is the independence given by algebraic closure:
for small sets $A$, $B$ and $C$, the relation $A\ind_C B$ holds exactly when the closures of $AC$ and $BC$ meet in the closure of $C$.
The independence theorem then reduces to the freedom to prescribe the remaining relations, edges or values of the form, on each side separately.

\begin{remark}\label{rem:fields}
  The proof of \cref{thm:symp-simple} applies over every finite field $\FF_q$, for a prime power $q$, in the language with a unary function for multiplication by each scalar and a binary relation for each value of the form.
  For symmetric forms in characteristic two, and for arbitrary bilinear forms, the identification of the limit through symplectic bases and Witt’s theorem is no longer available,
  but the Fraïssé limit still exists, since the finite spaces with such a form have free amalgamation.
  The verification of \cref{fact:kp} goes through with the values $\omega(x,x)$ prescribed by the type and, for forms that are not symmetric, with the functionals $\omega(x,\cdot)$ and $\omega(\cdot,x)$ glued separately.
\end{remark}

\section{Stabilizer codes as a structure: a dictionary}\label{sec:dictionary}

The dictionary rests on the linear algebra of Pauli labels:
multiplication modulo phases is vector addition, commutation is a symplectic pairing, and a syndrome is the restriction of the pairing to the span of the checks.
These identifications connect three model-theoretic constructions with stabilizer codes.
Finite-substructure homogeneity is expressed by Clifford conjugation,
forking of a single label is detected by a change of linear span over a specified base, and the independence theorem has a finite-dimensional counterpart in the compatibility of syndrome functionals.
After recalling stabilizer codes, we state and prove the corresponding results and give two readings, translating model theory for the physicist and stabilizer-code algebra for the logician.
The Laflamme--Miquel--Paz--Zurek five-qubit code~\cite{Laflamme-Miquel-Paz-Zurek:1996}, independently found by Bennett, DiVincenzo, Smolin and Wootters~\cite{Bennett-DiVincenzo-Smolin-Wootters:1996}, is the running example;
it is the smallest code correcting an arbitrary error on one qubit.
The translations are collected in \cref{tab:dictionary} at the end of the section;
those for the response structure of the Pauli observable, with its mixtures and probes, are collected in \cref{tab:response-dictionary} at the end of \cref{sec:pauli-tp2}.

\subsection{Stabilizer codes in brief}
We recall what the logician needs (see \cite{Gottesman:1997}).
For a positive integer $n$, a \emph{stabilizer group} on $n$ qubits is a group $\mathcal{G}$ of pairwise commuting Pauli strings that does not contain $-I$.
Its common eigenspace for the eigenvalue $1$ is the \emph{code space};
if $\mathcal{G}$ has $n-k$ independent generators, for an integer $k$ with $0\le k\le n$, then the code space has dimension $2^k$ and encodes $k$ qubits.
In the labels of \cref{sec:pauli}, the group $\mathcal{G}$ corresponds, up to signs, to an isotropic subspace $S\subseteq\FF_2^{2n}$, that is, a subspace on which the symplectic form $\omega$ vanishes.
An error is a Pauli string $e$;
measuring independent generators $g_1,\dots,g_{n-k}$ of $\mathcal{G}$ on the corrupted state returns the \emph{syndrome} $(\omega(g_1,e),\dots,\omega(g_{n-k},e))$, and the decoder chooses a recovery operation from the syndrome.
Errors that differ by a stabilizer have the same syndrome, and the decoder need not distinguish them, since they have the same action on the code space up to a phase.
An error with zero syndrome is undetectable.
It is harmless if it lies in $\mathcal{G}$ up to a phase, since it then acts on the code space as a scalar, and it is a \emph{logical error} otherwise.
On labels, the errors with zero syndrome form the \emph{centralizer} $S^\perp=\{e\in\FF_2^{2n}:\omega(s,e)=0\text{ for all }s\in S\}$, which contains $S$ and has dimension $2n-(n-k)=n+k$;
a set $L$ of \emph{logical operators} is a set of $2k$ labels with $S^\perp=S\oplus\spn(L)$, and the values $\omega(\ell,e)$ for $\ell\in L$ record which logical operator an error of $S^\perp$ applies to the encoded qubits.
If $k\ge1$, the \emph{distance} of the code is the least number of qubits on which a logical error acts.
A \emph{Clifford unitary} is a unitary operator $U$ that maps Pauli strings to Pauli strings under conjugation, up to a phase;
on labels, conjugation by $U$ is a linear map of $\FF_2^{2n}$ that preserves $\omega$.

\begin{example}\label{exa:five-qubit}
  The $[[5,1,3]]$ code has the stabilizer generators
  \[
    g_1=XZZXI,\quad g_2=IXZZX,\quad g_3=XIXZZ,\quad g_4=ZXIXZ,
  \]
  where each word stands for the tensor product of its letters (thus $XZZXI$ stands for $X\otimes Z\otimes Z\otimes X\otimes I$), the letters being the Pauli matrices $X$, $Z$ and $Y=\mathrm{i}XZ$ of \cref{sec:pauli} and the identity $I$ of $\CC^2$.
  The code encodes one qubit, with logical operators $\bar{X}=XXXXX$ and $\bar{Z}=ZZZZZ$.
  Each of the fifteen errors that act as $X$, $Y$ or $Z$ on a single qubit has its own nonzero syndrome, and there are exactly fifteen nonzero syndromes of four bits;
  for instance, the error $XIIII$ has syndrome $(0,0,0,1)$ and the error $IIZII$ has syndrome $(0,0,1,0)$.
  The code therefore corrects every error on one qubit, and its distance is at least $3$;
  after \cref{prop:logical} we exhibit a logical error that acts on exactly three qubits, which shows that the distance is exactly $3$.
\end{example}

\subsection{Fraïssé limits and Clifford circuits}
Finite Pauli configurations carry both linear relations among their labels and the alternating form that records commutation.
Every finite alternating commutation matrix is realized on linearly independent labels (see \cref{prop:age}), and every isometry between finite-dimensional label subspaces extends to a Clifford conjugation on finitely many qubits (see \cref{thm:witt-clifford});
the isometry preserves addition and the form, not only the commutation graph.
For isotropic subspaces, whose labels commute pairwise, the signs of chosen generators can also be matched, yielding the encoding statement of \cref{cor:encoding}.
Here a Clifford circuit is a product of elementary Clifford unitaries on one or two qubits;
every finite-qubit Clifford unitary admits such a decomposition up to a phase~\cite{Gottesman:1997,Dehaene-deMoor:2003}.
Recall from \cref{sec:pauli} that $P_v$ is the Hermitian Pauli string with label $v$, and, for a positive integer $i$, write $Z_i$ for the phase flip on the $i$th qubit.

\begin{proposition}\label{prop:age}
  Let $m$ be a positive integer.
  If $\Theta$ is a symmetric $m\times m$ matrix over
  $\FF_2$ with zero diagonal, then there are Pauli strings $P_1,\dots,P_m$ with linearly independent labels, on at most $m$ qubits, such that, for all $i,j\in\{1,\dots,m\}$, the strings $P_i$ and $P_j$ anticommute exactly when $\Theta_{ij}=1$.
\end{proposition}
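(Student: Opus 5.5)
The plan is to write the labels down explicitly on exactly $m$ qubits. Each string $P_i$ will carry a bit flip on its own qubit $i$, and phase flips on the later qubits selected by the $i$th row of the strictly upper triangular part of $\Theta$. In the notation of \cref{sec:pauli}, let $e_i\in\FF_2^m$ be the $i$th standard basis vector. Let $u_i\in\FF_2^m$ be the vector with $(u_i)_j=\Theta_{ij}$ for $j>i$ and $(u_i)_j=0$ for $j\le i$. Define $v_i=(e_i,u_i)\in\FF_2^{2m}$ and $P_i=P_{v_i}$. Concretely, $P_i$ is (up to phase) $X$ on qubit $i$ tensored with $Z$ on each qubit $j>i$ with $\Theta_{ij}=1$, and the identity elsewhere.

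Linear independence is immediate: the $x$-parts of the labels are $e_1,\dots,e_m$, which are linearly independent. So any vanishing combination of the $v_i$ must already vanish in the $x$-coordinates, and hence has all coefficients zero.

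The commutation relations come from the formula for $\omega$ recalled in \cref{sec:pauli}. We have
\[
  \omega(v_i,v_j)=e_i\cdot u_j+u_i\cdot e_j=(u_j)_i+(u_i)_j .
\]
For $i<j$ the first term vanishes because $u_j$ is supported on indices greater than $j$. The second term equals $\Theta_{ij}$. Hence $\omega(v_i,v_j)=\Theta_{ij}$, and by symmetry of $\omega$ and of $\Theta$ the same holds for $i>j$. For $i=j$ both sides vanish, since $\omega$ is alternating and $\Theta$ has zero diagonal. By the commutation identity $P_vP_{v'}=(-1)^{\omega(v,v')}P_{v'}P_v$, the strings $P_i$ and $P_j$ therefore anticommute exactly when $\Theta_{ij}=1$.

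There is no real obstacle; the only point requiring care is the choice of a triangular splitting of $\Theta$. Putting the whole symmetric matrix into the $z$-parts would double-count each entry and cancel it in characteristic two. Restricting to the strictly upper triangle avoids this, and it is exactly the place where both the symmetry and the zero diagonal of $\Theta$ are used.
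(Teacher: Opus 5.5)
Your proof is correct, and it takes a different route from the paper's.

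The paper regards $\Theta$ as an alternating form on $\FF_2^m$ and splits off its radical $W_0$. It chooses a symplectic basis of $r$ pairs for a complement $W$. It then sends the pairs to the labels of $X$ and $Z$ on the first $r$ qubits, and a basis of $W_0$ to labels of $X$ on $\dim W_0$ further qubits. The required strings are the images of the standard basis vectors.

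That argument rests on the normal form of alternating forms, in keeping with the symplectic-basis and Witt-theorem setting of the surrounding section. It uses $r+\dim W_0=m-r$ qubits, which is the least possible number for labels that are linearly independent with the prescribed commutation pattern. Indeed, under any such linear isometric embedding into $\FF_2^{2n}$, the image of $W$ is a nondegenerate subspace of dimension $2r$. The image of $W_0$ is then an isotropic subspace of its orthogonal complement, which is nondegenerate of dimension $2n-2r$, so $\dim W_0\le n-r$.

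Your construction needs no structure theorem and no change of basis. Each label is read directly off a row of $\Theta$: a bit flip on qubit $i$, with phase flips on the later qubits $j$ for which $\Theta_{ij}=1$. Linear independence and the commutation pattern are then checked coordinatewise. The price is that you always use exactly $m$ qubits, which still meets the bound in the statement. Both constructions give an isometric linear embedding of $(\FF_2^m,\Theta)$ into the labels.
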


\begin{proof}
  The matrix $\Theta$ defines an alternating form on $\FF_2^m$.
  Let $W_0$ be its radical and choose a complement $W$ of $W_0$;
  the form is nondegenerate on $W$, which therefore has a symplectic basis of $r$ pairs, where $2r=\dim W$.
  Send the $r$ pairs to the labels of $X$ and $Z$ on the first $r$ qubits, and a basis of $W_0$ to the labels of $X$ on $\dim W_0$ further qubits.
  This linear map is injective and preserves the form, and the images of the standard basis vectors of $\FF_2^m$ are the required strings, on $r+\dim W_0\le m$ qubits.
\end{proof}

\begin{theorem}\label{thm:witt-clifford}
  Let $n$ be a positive integer, let $W$ and $W'$ be subspaces of $\FF_2^{2n}$, and let $f\colon W\to W'$ be a linear bijection with $\omega(f(v),f(w))=\omega(v,w)$ for all $v,w\in W$.
  Then there is a Clifford unitary $U$ on $n$ qubits with $UP_vU^*=\pm P_{f(v)}$ for all $v\in W$.
  If the vectors $v_1,\dots,v_m$ form a basis of $W$, then the unitary $U$ can be chosen with $UP_{v_i}U^*=P_{f(v_i)}$ for all $i\le m$.
\end{theorem}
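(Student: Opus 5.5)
First extend $f$ to a symplectic isometry $F$ of all of $\FF_2^{2n}$, using Witt's theorem for the nondegenerate alternating form $\omega$ on $\FF_2^{2n}$ \cite{Artin:1957}. Next, show that every symplectic linear map $F$ of $\FF_2^{2n}$ is implemented by a Clifford unitary up to signs. Let $e_1,\dots,e_n,f_1,\dots,f_n$ be the standard symplectic basis of labels of $X_i$ and $Z_i$. The Hermitian strings $Q_i=P_{F(e_i)}$ and $R_i=P_{F(f_i)}$ have the same commutation relations as $X_i$ and $Z_i$: $Q_iR_i=-R_iQ_i$, and all other pairs commute.

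Consider the commuting family $R_1,\dots,R_n$. The labels $F(f_i)$ are linearly independent, so no nontrivial product of the $R_i$ is $\pm I$. Hence the joint eigenspaces of the $R_i$ for each sign pattern all have dimension one; the $Q_i$ permute them transitively, so they have equal dimension and there are $2^n$ of them. Choose a unit vector $\psi$ in the joint $+1$ eigenspace, and set $\psi_x=Q_1^{x_1}\cdots Q_n^{x_n}\psi$ for $x\in\FF_2^n$. This is an orthonormal basis on which the $Q_i$ and $R_i$ act exactly as $X_i$ and $Z_i$ act on the computational basis $|x\rangle$. Define $U|x\rangle=\psi_x$. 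Then $UX_iU^*=Q_i$ and $UZ_iU^*=R_i$. Since every $P_v$ is, up to a phase, a product of the $X_i$ and $Z_i$, the conjugate $UP_vU^*$ is a phase times $P_{F(v)}$. Both operators are Hermitian and square to $I$, so the phase is $\pm1$. Thus $U$ is Clifford and satisfies $UP_vU^*=\pm P_{f(v)}$ on $W$.

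For the sign refinement, correct $U$ by a Pauli string: replace $U$ by $UP_u$, or equivalently precompose with conjugation by a Pauli. Conjugation by $P_u$ multiplies $P_v$ by $(-1)^{\omega(u,v)}$. Let $\sigma_i\in\FF_2$ record the wrong sign, with $UP_{v_i}U^*=(-1)^{\sigma_i}P_{f(v_i)}$. Since the $v_i$ are linearly independent, \cref{lem:generic}, or rather its finite-dimensional proof inside $\FF_2^{2n}$, gives $u\in\FF_2^{2n}$ with $\omega(u,v_i)=\sigma_i$ for all $i\le m$. Then $U'=UP_u$ satisfies $U'P_{v_i}U'^*=UP_uP_{v_i}P_uU^*=(-1)^{\sigma_i}UP_{v_i}U^*=P_{f(v_i)}$. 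Only the basis elements are fixed this way; other elements of $W$ keep a sign, which is consistent with the statement.

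The main obstacle is the second step: realizing an abstract symplectic map by an actual unitary, with the sign bookkeeping that makes the images Hermitian and the eigenspace construction well defined. Witt extension and the Pauli correction are routine once that step is in place.
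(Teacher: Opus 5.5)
Your proof is correct. Its outline matches the paper's: Witt extension, then implementation of the extended isometry by a Clifford unitary, then a Pauli correction of the signs. The difference is in the middle step. The paper cites \cite{Calderbank-Rains-Shor-Sloane:1998,Dehaene-deMoor:2003} for the fact that every symplectic map of $\FF_2^{2n}$ is induced by conjugation by a Clifford unitary, whereas you prove it. The $R_i$ are commuting Hermitian involutions, and each $Q_i$ commutes with the others, squares to $I$ and flips exactly one joint eigenvalue. Hence the $2^n$ joint eigenspaces have equal dimension, so dimension one, and the basis $\psi_x$ intertwines the computational representation with the new one. This is the finite Stone--von Neumann argument, and it makes the theorem self-contained.

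A small point on presentation: the equal-dimension count via the $Q_i$ already gives dimension one. Your remark that no nontrivial product of the $R_i$ is $\pm I$ would give it only together with the tracelessness of non-scalar Pauli strings, via the trace of $2^{-n}\prod_i(I\pm R_i)$. So the ``Hence'' should either be dropped or completed; nothing is lost either way.

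What the citation gives the paper in addition is a decomposition of $U$ into one- and two-qubit gates. The surrounding text relies on this when it speaks of Clifford circuits; your $U$ is Clifford by construction but comes without a circuit.

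Your sign correction $UP_u$, with $\omega(u,v_i)=\sigma_i$, acts on the input side, while the paper's $P_bU$, with $(-1)^{\omega(f(v_i),b)}=\varepsilon_i$, acts on the output side. Since $UP_u=\pm P_{F(u)}U$, the two corrections agree up to a global sign, with $b=F(u)$.
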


\begin{proof}
  By Witt’s theorem, the map $f$ extends to a linear bijection $\widetilde{f}$ of $\FF_2^{2n}$ that preserves $\omega$.
  Every such $\widetilde{f}$ is induced by conjugation by a Clifford unitary \cite{Calderbank-Rains-Shor-Sloane:1998,Dehaene-deMoor:2003};
  let $U$ be one.
  Since the operator $UP_vU^*$ is a Pauli string up to a phase, Hermitian and with square $I$, the phase is a sign.
  For the second statement, let $\varepsilon_i\in\{\pm1\}$ be the sign with $UP_{v_i}U^*=\varepsilon_iP_{f(v_i)}$.
  The vectors $f(v_i)$ are linearly independent, so by nondegeneracy there is $b\in\FF_2^{2n}$ with $(-1)^{\omega(f(v_i),b)}=\varepsilon_i$ for all $i$.
  Conjugation by $P_b$ multiplies $P_{f(v_i)}$ by $(-1)^{\omega(f(v_i),b)}$, so the Clifford unitary $P_bU$ has the required property.
\end{proof}

\begin{corollary}\label{cor:encoding}
  Let $n$ be a positive integer.
  Any two stabilizer groups on $n$ qubits with the same number of independent generators are conjugate by a Clifford unitary.
  In particular, for every integer $k$ with $0\le k\le n$, every stabilizer code that encodes $k$ qubits into $n$ has an encoding circuit made of Clifford gates:
  a Clifford unitary that maps the code space of the trivial code, with generators $Z_1,\dots,Z_{n-k}$, onto the code space of the given code.
\end{corollary}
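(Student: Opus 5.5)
The plan is to apply \cref{thm:witt-clifford} to an isometry between the label spaces of the two stabilizer groups, and then to use its second statement to fix the signs of the generators.

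Let $\mathcal{G}$ and $\mathcal{G}'$ be stabilizer groups on $n$ qubits, each with $n-k$ independent generators. By the discussion preceding \cref{exa:five-qubit}, they correspond to isotropic subspaces $S,S'\subseteq\FF_2^{2n}$ of the same dimension $n-k$. Choose generators $g_1,\dots,g_{n-k}$ of $\mathcal{G}$ and $g'_1,\dots,g'_{n-k}$ of $\mathcal{G}'$. Each generator is a Pauli string with square $I$ (since $-I\notin\mathcal{G}$), so $g_i=\sigma_iP_{v_i}$ with $\sigma_i\in\{\pm1\}$, where $v_1,\dots,v_{n-k}$ is a basis of $S$. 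Similarly $g'_i=\sigma'_iP_{v'_i}$, where $v'_1,\dots,v'_{n-k}$ is a basis of $S'$. Define $f\colon S\to S'$ by $f(v_i)=v'_i$. This map is a linear bijection, and it preserves $\omega$ trivially, because $\omega$ vanishes on both $S$ and $S'$.

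By the second statement of \cref{thm:witt-clifford}, there is a Clifford unitary $U_0$ with $U_0P_{v_i}U_0^*=P_{v'_i}$ for all $i$. It remains to match the signs $\sigma_i\sigma'_i$. The $v'_i$ are linearly independent, so by nondegeneracy there is $b$ with $(-1)^{\omega(v'_i,b)}=\sigma_i\sigma'_i$ for all $i$. Put $U=P_bU_0$. Then
\[
  Ug_iU^*=\sigma_i(-1)^{\omega(v'_i,b)}P_{v'_i}=\sigma'_iP_{v'_i}=g'_i
\]
for every $i$. Conjugation is a group homomorphism, so $U\mathcal{G}U^*=\mathcal{G}'$. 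This sign adjustment is the only point that needs care: without it, $U$ would map $\mathcal{G}$ onto a group that agrees with $\mathcal{G}'$ only up to signs.

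For the encoding statement, take $\mathcal{G}$ to be the group generated by $Z_1,\dots,Z_{n-k}$. It is a stabilizer group: the $Z_i$ commute, their labels are independent, and the group does not contain $-I$. Take $\mathcal{G}'$ to be the stabilizer group of the given code. If $\psi$ lies in the code space of $\mathcal{G}$, then for each $g'=UgU^*$ in $\mathcal{G}'$,
\[
  g'U\psi=Ug\psi=U\psi,
\]
so $U$ maps the code space of $\mathcal{G}$ into that of $\mathcal{G}'$. Applying the same argument to $U^*$ gives the reverse inclusion, so $U$ maps the first code space onto the second. Finally, $U$ decomposes into one- and two-qubit Clifford gates up to a phase, by the decomposition cited before \cref{prop:age}; this decomposition yields the encoding circuit.
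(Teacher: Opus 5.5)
Your proposal is correct and follows the paper's proof essentially step for step. You use the isometry $v_i\mapsto v'_i$ between the isotropic label spaces, apply the second clause of \cref{thm:witt-clifford}, show that each generator is $\pm P_{v_i}$ because $-I\notin\mathcal{G}$, and correct the signs by conjugating with $P_b$. Your explicit check that $U$ carries the code space of the trivial code onto that of the given code, together with the appeal to the one- and two-qubit decomposition, fills in a step the paper leaves implicit after establishing $U\mathcal{G}U^*=\mathcal{G}'$.
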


\begin{proof}
  Let $\mathcal{G}$ and $\mathcal{G}'$ be the two stabilizer groups, let
  $m$ be their common number of independent generators, and let $g_1,\dots,g_m$ and $g'_1,\dots,g'_m$ be independent generators of $\mathcal{G}$ and of $\mathcal{G}'$, respectively, with labels $v_i$ and $v'_i$ for $i=1,\dots,m$.
  Both families span isotropic subspaces, so the linear map $v_i\mapsto v'_i$ preserves $\omega$, and \cref{thm:witt-clifford} gives a Clifford unitary $U_0$ with $U_0P_{v_i}U_0^*=P_{v'_i}$ for all $i$.
  The square of each generator is a scalar, $I$ or $-I$, that lies in its group, and it is therefore $I$, since a stabilizer group does not contain $-I$;
  hence $g_i=\varepsilon_iP_{v_i}$ and $g'_i=\varepsilon'_iP_{v'_i}$ for signs $\varepsilon_i,\varepsilon'_i\in\{\pm1\}$.
  As in the proof of \cref{thm:witt-clifford}, there is $b\in\FF_2^{2n}$ with $(-1)^{\omega(v'_i,b)}=\varepsilon_i\varepsilon'_i$ for all $i$, and the Clifford unitary $U=P_bU_0$ satisfies $Ug_iU^*=g'_i$ for all $i$, whence $U\mathcal{G}U^*=\mathcal{G}'$.
\end{proof}

On countably many qubits, every finite-dimensional subspace of $V$ lies in the labels of the first $n$ qubits for some $n$.
Applying \cref{thm:witt-clifford} there, and extending by the identity on the remaining qubits, shows that every isometry between finite-dimensional subspaces of
$(V,\omega)$ extends to an automorphism of $(V,\omega)$ implemented by a Clifford circuit on finitely many qubits.

\forphysicist{}
A Fraïssé limit is a countable
structure whose finite configurations are, up to isomorphism, exactly those of a given kind and in which every symmetry between two finite configurations extends to a symmetry of the whole structure;
any two such structures are isomorphic.
By \cref{prop:age}, every finite pattern of commutation relations occurs among Pauli strings, and by \cref{thm:witt-clifford}, every symmetry between finite pieces extends to a global one, implemented by a Clifford circuit.
Thus the label structure $(V,\omega)$ of finite-support Pauli strings modulo phases is the Fraïssé limit of the finite alternating spaces over $\FF_2$.
These spaces record finite commutation patterns together with linear relations among labels.
Clifford circuits on finitely many qubits implement the symmetries that witness the limit’s homogeneity.
The existence of encoding circuits for stabilizer codes, usually proved by an explicit construction, is the shadow of this homogeneity:
by \cref{cor:encoding}, the five-qubit code is the image of the trivial code with generators
$Z_1,Z_2,Z_3,Z_4$, the phase flips on the first four qubits, under a Clifford circuit, because both groups have four independent generators, all commuting, and the two generating sets therefore have the same commutation pattern.

\forlogician{}
The structure $(V,\omega)$ of
\cref{sec:pauli} is the Pauli group modulo phases, and, for each positive integer $n$, the Clifford unitaries on the first $n$ qubits act on the finite piece $(\FF_2^{2n},\omega)$ through its automorphism group, the symplectic group.
Ultrahomogeneity, which in \cref{sec:pauli} came from Witt’s theorem and a back-and-forth argument, is implemented by Clifford circuits.
On labels a stabilizer code becomes an isotropic subspace that determines the code up to the signs of its generators.
The encoding result in \cref{cor:encoding} follows from homogeneity together with a correction of these signs by Pauli conjugation.
Nondegeneracy of $\omega$ on $\FF_2^{2n}$ gives the correcting label within the code’s own qubits, as in the proof of \cref{thm:witt-clifford}.

\subsection{Forking and logical errors}
In a simple theory, an extension of the type of an element forks when,
in the sense recalled in \cref{sec:simple} and explained after \cref{prop:logical}, it confines the element to one of infinitely many alternatives that the original type cannot tell apart.
The next theorem computes forking for the Pauli group, using the description of nonforking independence in \cref{thm:symp-simple}.

\begin{theorem}\label{thm:forking-pauli}
  Let $e$ be an element of a monster model $\MM$ of the theory of $(V,\omega)$, and let $D\subseteq B$ be small subsets of $\MM$.
  Then the type $\tp(e/B)$ forks over $D$ if and only if $e\in\spn(B)\setminus\spn(D)$.
\end{theorem}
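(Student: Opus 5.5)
We reduce the statement to the description of nonforking independence in \cref{thm:symp-simple}. By definition, $\tp(e/B)$ forks over $D$ exactly when $e\ind_D B$ fails. By \cref{thm:symp-simple}, $e\ind_D B$ holds if and only if $\spn(eD)\cap\spn(BD)=\spn(D)$. Since $D\subseteq B$, we have $\spn(BD)=\spn(B)$. So the theorem amounts to the linear-algebra claim
\[
  \spn(eD)\cap\spn(B)\ne\spn(D)\iff e\in\spn(B)\setminus\spn(D).
\]
The single element $e$ counts as a finite tuple, so finite character plays no role here.

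For the direction from right to left, suppose that $e\in\spn(B)\setminus\spn(D)$. Then $e$ lies in $\spn(eD)\cap\spn(B)$ but not in $\spn(D)$, so the intersection strictly contains $\spn(D)$, and the type forks.

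For the converse we argue by contraposition, in two cases.
\begin{enumerate}[label=\textup{(\alph*)},leftmargin=*]
\item If $e\in\spn(D)$, then $\spn(eD)=\spn(D)\subseteq\spn(B)$, and the intersection is $\spn(D)$.
\item If $e\notin\spn(B)$, we use that $\spn(eD)=\spn(D)\cup(e+\spn(D))$, since the field is $\FF_2$. Every vector of $e+\spn(D)$ lies outside $\spn(B)$, because $\spn(D)\subseteq\spn(B)$. So again the intersection equals $\spn(D)$.
\end{enumerate}
In both cases $e\ind_D B$ holds, so the type does not fork.

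There is no serious obstacle, because the content lies in \cref{thm:symp-simple} via the Kim--Pillay theorem (\cref{fact:kp}). The only point to check is the identification of ``$\tp(e/B)$ forks over $D$'' with the failure of $e\ind_D B$. With the paper's definition, $e\ind_D B$ means that $\tp(e/BD)=\tp(e/B)$ does not fork over $D$, and this identification is immediate.
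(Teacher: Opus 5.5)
Your proof is correct and takes the same approach as the paper's. Both reduce, via \cref{thm:symp-simple} and $\spn(BD)=\spn(B)$, to deciding when $\spn(eD)\cap\spn(B)=\spn(D)$, and both split on whether $e\in\spn(D)$, using $\spn(eD)=\spn(D)\cup(e+\spn(D))$ in the other case; the paper merely states that case as a single equivalence instead of your two directions.
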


\begin{proof}
  By \cref{thm:symp-simple}, the type $\tp(e/B)$ does not fork over $D$ exactly when $\spn(eD)\cap\spn(B)=\spn(D)$.
  If $e\in\spn(D)$, the spans $\spn(eD)$ and $\spn(D)$ coincide, and the equality holds.
  Otherwise the span of $eD$ is $\spn(D)\oplus\{0,e\}$, and the equality fails if and only if some vector $e+d$ with $d\in\spn(D)$ lies in $\spn(B)$, that is, if and only if $e\in\spn(B)$.
\end{proof}

By \cref{thm:forking-pauli}, commutation data never fork:
the type of an element $e$ of $\MM$ over a small set $B$ records the values $\omega(b,e)$ for $b\in B$, and these values, constrained only by linearity and by the type of $e$ over a subset $D$ of $B$, can be prescribed without forking over $D$.
Forking happens only when $e$ becomes a linear combination of elements of $B$;
for Pauli strings, this means that $e$ becomes, up to a phase, a product of strings in $B$.
In a stabilizer code this criterion separates the two kinds of nonzero undetectable error.
To state the consequence for the distance of a code, we call the number of qubits on which an error acts nontrivially its \emph{weight}.

\begin{corollary}\label{cor:distance}
  Let $n$ be a positive integer, and regard $\FF_2^{2n}$ as the subspace of $V$ formed by the labels of the strings on the first $n$ qubits.
  Let $\mathcal{G}$ be a stabilizer group on $n$ qubits with set of labels $S$, and let $e\in\FF_2^{2n}$ be a nonzero label with zero syndrome.
  Then the type $\tp(e/S)$ forks over $\emptyset$ if and only if $e\in S$.
\end{corollary}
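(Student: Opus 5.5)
The plan is to apply \cref{thm:forking-pauli} with base $D=\emptyset$ and parameter set $B=S$, regarded inside the monster model $\MM$. The only point that needs care is the transfer from the countable structure $V$ to $\MM$. Since the theory of $(V,\omega)$ is complete, $V$ embeds elementarily into $\MM$, so we may regard $V$, and with it $\FF_2^{2n}$ and $S$, as a subset of $\MM$. Types over the small set $S$ are the same whether computed in $V$ or in $\MM$, because $V\preceq\MM$. Linear spans are also unaffected: $\spn(S)$ is computed by the same finite sums in either structure, and since $S$ is a subspace, $\spn(S)=S$. Finally, $\spn(\emptyset)=\{0\}$.

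By \cref{thm:forking-pauli}, the type $\tp(e/S)$ forks over $\emptyset$ exactly when $e\in\spn(S)\setminus\spn(\emptyset)=S\setminus\{0\}$. The hypothesis $e\neq0$ reduces this condition to $e\in S$, which gives the equivalence.

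The hypothesis of zero syndrome plays no role in the equivalence itself; it only places $e$ in $S^\perp$, where the dichotomy between stabilizers and logical errors is meaningful. So I would remark that the criterion holds for every nonzero label $e$. For $e\notin S$, I would add that $\tp(e/S)$ does not fork over $\emptyset$, which is the case of logical errors when $e\in S^\perp$.

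There is no real obstacle in this argument. The step that needs a sentence of justification is the identification of $S$ with a small subset of $\MM$ without changing its span or the types over it, and this follows from the elementary embedding together with quantifier elimination.
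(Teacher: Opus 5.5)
Your proposal is correct and follows the paper's proof: apply \cref{thm:forking-pauli} with $D=\emptyset$ and $B=S$, using $\spn(S)=S$ and $\spn(\emptyset)=\{0\}$. The extra justification for placing $S$ inside $\MM$, and your remark that the zero-syndrome hypothesis is not used in the equivalence, are both accurate.
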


\begin{proof}
  Apply \cref{thm:forking-pauli} with $D=\emptyset$ and $B=S$;
  the span of $S$ is $S$ itself, and $\spn(\emptyset)=\{0\}$.
\end{proof}

Thus the type over $S$ of a nonzero label with zero syndrome forks over $\emptyset$ exactly when the label is harmless.
Since a logical error is a nonzero label with zero syndrome outside $S$, the distance of a code that encodes at least one qubit is the least weight of a nonzero label in $\FF_2^{2n}$ with zero syndrome whose type over $S$ does not fork over $\emptyset$.

The stabilizer group alone does not tell logical errors apart, because they all have the same type over
its set of labels, but the logical operators, once known, characterize them by forking (see \cref{prop:logical} below).

\begin{proposition}\label{prop:logical}
  Let $n$ be a positive integer, regard $\FF_2^{2n}$
  as a subspace of $V$ as in \cref{cor:distance}, let $\mathcal{G}$ be a stabilizer group on $n$ qubits with set of labels $S$, let $L$ be a set of logical operators of its code, and let $e\in\FF_2^{2n}$ be a label with zero syndrome.
  Then the following hold:
  \begin{enumerate}[label=\textup{(\alph*)}]
  \item
    the label $e$ is a logical error if and only if $\tp(e/S\cup L)$ forks over $S$;
  \item
    all logical errors have the same type over $S$, and this type does not fork over $\emptyset$.
  \end{enumerate}
\end{proposition}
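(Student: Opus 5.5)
For part~(a) I will apply \cref{thm:forking-pauli} with $D=S$ and $B=S\cup L$. It says that $\tp(e/S\cup L)$ forks over $S$ exactly when $e\in\spn(S\cup L)\setminus\spn(S)$. The set $S$ is a subspace, so $\spn(S)=S$. By the definition of a set of logical operators, $\spn(S\cup L)=S\oplus\spn(L)=S^\perp$. Since $e$ has zero syndrome, $e\in S^\perp$ holds automatically. The forking condition therefore reduces to $e\notin S$, and a label of $S^\perp$ outside $S$ is precisely a logical error.

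For part~(b) I will use quantifier elimination for the theory of $(V,\omega)$. The type of $e$ over $S$ is determined by two pieces of data: the linear relations of $e$ over $\spn(S)=S$, and the values of $\omega$ on $\spn(eS)$. For a logical error $e\notin S$, so $e$ is linearly independent over $S$. The only new values of $\omega$ are $\omega(s,e)$ for $s\in S$, together with $\omega(e,e)=0$, and these all vanish because $e$ has zero syndrome. Hence any two logical errors $e,e'$ give the same data, and the map fixing $S$ with $e\mapsto e'$ extends to an isometry $S\oplus\{0,e\}\to S\oplus\{0,e'\}$. It follows that $\tp(e/S)=\tp(e'/S)$.

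For the nonforking claim in~(b) I will apply \cref{thm:forking-pauli} with $D=\emptyset$ and $B=S$, or equivalently \cref{cor:distance}. The type $\tp(e/S)$ forks over $\emptyset$ iff $e\in S\setminus\{0\}$, and this fails for a logical error.

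The one point needing care is the phrase ``this type does not fork over the empty set.'' In the paper's definition, forking of a type is relative to a base: $\tp(e/S)$ forks over $\emptyset$. For the common type this follows from the previous paragraph applied to any realization. The edge case $k=0$, where no logical errors exist, makes the statement vacuous. I expect no real obstacle beyond correctly identifying $\spn(S\cup L)$ with $S^\perp$.
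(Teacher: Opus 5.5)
Your proposal is correct and follows the paper's proof essentially step for step. Part~(a) applies \cref{thm:forking-pauli} with $D=S$ and $B=S\cup L$, using $e\in S^\perp=\spn(S\cup L)$. Part~(b) gets the common type over $S$ from quantifier elimination, since $e$ has no linear relations over $S$ and all new values of $\omega$ vanish, and gets nonforking over $\emptyset$ from \cref{cor:distance}; your explicit isometry $S\oplus\{0,e\}\to S\oplus\{0,e'\}$ only spells out the quantifier-elimination step that the paper leaves implicit.
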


\begin{proof}
  (a) Since the syndrome of $e$ is zero, the label $e$ lies in $S^\perp=\spn(S\cup L)$.
  By \cref{thm:forking-pauli} with $D=S$ and $B=S\cup L$, the type $\tp(e/S\cup L)$ forks over $S$ if and only if $e\in\spn(S\cup L)\setminus S=S^\perp\setminus S$, which is the set of logical errors.

  (b) Suppose that the label $e$ is a logical error, that is, $e\in S^\perp\setminus S$.
  By quantifier elimination, the type of $e$ over $S$ is determined by the linear relations of $e$ over $S$, of which there are none since $e\notin S$, and by the values of $\omega$ on $\spn(eS)$.
  These values are fixed, through bilinearity, by the values of $\omega$ on $S$, by $\omega(e,e)$, which is $0$ because $\omega$ is alternating, and by the values $\omega(s,e)$ for $s\in S$, which are $0$ because $e$ lies in $S^\perp$.
  Hence all logical errors have the same type over $S$.
  The label $e$ is nonzero, since the zero label lies in $S$;
  as it has zero syndrome and lies outside $S$, its type over $S$ does not fork over $\emptyset$, by \cref{cor:distance}.
\end{proof}

In the five-qubit code, the errors $g_1g_2=XYIYX$ (up to a phase) and $\bar{X}=XXXXX$ both have zero syndrome.
The first lies in the stabilizer group, whose set of labels we denote by $S$, so its type over $S$ forks over $\emptyset$;
it is harmless.
The second does not, so its type does not fork over $\emptyset$;
it flips the encoded qubit, and it acts on five qubits.
By \cref{exa:five-qubit}, an error of the second kind acts on at least three qubits, and the logical error $\bar{X}g_1=IYYIX$, up to a phase, acts on exactly three;
the distance of the code is therefore $3$.
With $L=\{\bar{X},\bar{Z}\}$, the logical errors $\bar{X}$, $\bar{Z}$ and $\bar{X}g_1$ have the same type over $S$;
over $S\cup L$ each of them lies in $\spn(S\cup L)$, whence its type becomes algebraic and forks over $S$, as \cref{prop:logical}(a) predicts.

\forphysicist{}
Forking is the model-theoretic notion of an observation that singles out one of infinitely many alternatives that the data known in advance cannot tell apart and of which, for some fixed integer $k\ge2$, no $k$ can hold together.
To be precise, in a simple theory, by a theorem of Kim \cite{Kim:1998}, a condition
$\varphi(x,b)$ on an unknown element $x$ forks over a set
$D$ of known parameters exactly when, for some $k\ge2$, the parameter
$b$ is the first term of a sequence $(b_j)_{j\in\NN}$ that is indiscernible over $D$ and $k$-inconsistent for $\varphi$, in the sense that no $k$ of the conditions $\varphi(x,b_j)$ can hold together (see \cref{def:dividing}).
A sequence is indiscernible over $D$ if any two of its increasing finite subsequences of the same length satisfy the same formulas with parameters from $D$.
A type forks over $D$ when one of its conditions does;
such a type narrows the possibilities for the element but need not determine it.
For a single Pauli label $e$, \cref{thm:forking-pauli} gives the exact criterion:
if $D\subseteq B$ are parameter sets, then $\tp(e/B)$ forks over $D$ precisely when $e\in\spn(B)\setminus\spn(D)$.
Thus a forking extension fixes the label as a sum of parameters from $B$, a sum that lies outside $\spn(D)$, although a complete commutation pattern need not fix a label.
For the stabilizer label space $S$ of a finite code and a nonzero error label $e$ with zero syndrome, the base matters.
Over $\emptyset$, the type $\tp(e/S)$ forks exactly when $e\in S$, the harmless case;
a logical error lies in $S^\perp\setminus S$ and its type over $S$ does not fork over $\emptyset$ (see \cref{cor:distance}).
For a code encoding at least one qubit, its distance is the least weight of such a logical error on the code’s qubits.
Now add logical labels $L$ with $S^\perp=S\oplus\spn(L)$.
The same logical error has $\tp(e/S\cup L)$ forking over $S$, because it is in the enlarged span but not in $S$ (see \cref{prop:logical}).
Without these added labels, all logical errors have the same type over $S$;
nonforking extensions to $S\cup L$ remain possible outside $\spn(S\cup L)$, for example on qubits beyond the code, as the next reading explains.
These are statements about dependence and available parameter sets, not a rule for inferring a unique error from its measured syndrome.

\forlogician{}
The theory of $(V,\omega)$ has a vector space pregeometry for algebraic closure, and by \cref{thm:forking-pauli} its forking is the dependence of this pregeometry:
the type of a single element over a set forks over a subset exactly when the element is algebraic over the set but not over the subset.
Thus the theory has SU-rank one, as does the theory of the Rado graph (see \cref{thm:rado-simple}), with linear span in place of equality.
In finite dimension, the information carried by a syndrome is measured by counting:
if $n$ and $m$ are positive integers and $s_1,\dots,s_m$
are linearly independent labels in $\FF_2^{2n}$, then for each syndrome $\sigma\in\FF_2^m$ exactly $2^{2n-m}$ labels $e$ satisfy $\omega(s_i,e)=\sigma_i$ for all $i$, since the map $e\mapsto(\omega(s_i,e))_{i\le m}$ is linear and onto.
This count records codimension, which forking ignores.
With $S$ the set of labels of the stabilizer group of a code, the logical errors of the code are the elements of the centralizer of $S$ outside $S$, and by \cref{prop:logical} they share one type over $S$.
For a set $L$ of logical operators, the nonforking extensions of this type to $S\cup L$ are realized in $(V,\omega)$ by labels outside $\spn(S\cup L)$ that commute with $S$, for instance by errors on qubits that the code does not touch;
the logical errors realize the forking extensions of the same type, which become algebraic once the logical operators are added.

\subsection{The independence theorem and the gluing of syndromes}
The Kim--Pillay independence theorem combines two nonforking extensions of one type over a model when their parameter sets are independent over that model.
For Pauli labels, the corresponding compatibility condition is linear:
the spans of the two sets of checks meet only in the span of their common base, and the two syndrome functionals agree there.
The proposition below states this finite-dimensional counterpart over the span of an arbitrary common set, rather than assuming that the base is a model.
It asserts existence of a label with both prescribed syndromes;
it imposes neither a weight bound nor a choice of decoder.
Pairwise commutation is needed to interpret a set of checks as simultaneous stabilizer measurements, but not for the linear compatibility statement itself.

\begin{proposition}\label{prop:glue}
  \emergencystretch=1em
  Let $n$ be a positive integer, let $C$, $B$ and $D$ be subsets of $\FF_2^{2n}$ with $\spn(BC)\cap\spn(DC)=\spn(C)$, and let $\xi_1$ and $\xi_2$ be linear functionals on $\spn(BC)$ and on $\spn(DC)$, respectively, that agree on $\spn(C)$.
  Then there is a label $e\in\FF_2^{2n}$ with $\omega(s,e)=\xi_1(s)$ for $s\in\spn(BC)$ and $\omega(s,e)=\xi_2(s)$ for $s\in\spn(DC)$.
\end{proposition}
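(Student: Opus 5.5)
The plan is to glue the two functionals into one functional on $\spn(BCD)$ and then represent that functional by a label, using nondegeneracy of $\omega$ on $\FF_2^{2n}$.

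Write $Q_1=\spn(BC)$, $Q_2=\spn(DC)$ and $Q_0=\spn(C)$, so that $Q_1\cap Q_2=Q_0$ by hypothesis. First define $\xi$ on $Q_1+Q_2=\spn(BCD)$ by $\xi(u_1+u_2)=\xi_1(u_1)+\xi_2(u_2)$ for $u_1\in Q_1$ and $u_2\in Q_2$. To see that $\xi$ is well defined, suppose $u_1+u_2=u_1'+u_2'$. Then $u_1-u_1'=u_2'-u_2$ lies in $Q_1\cap Q_2=Q_0$. Since $\xi_1$ and $\xi_2$ agree on $Q_0$, we get $\xi_1(u_1)-\xi_1(u_1')=\xi_2(u_2')-\xi_2(u_2)$, which is the required equality. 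Linearity of $\xi$ is immediate, and $\xi$ restricts to $\xi_1$ on $Q_1$ and to $\xi_2$ on $Q_2$. This is the same gluing step used in the proof of the independence theorem in \cref{thm:symp-simple}. It is the only place the hypothesis on the spans enters, and it is the only point needing any care.

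Next, extend $\xi$ to a linear functional $\widetilde\xi$ on all of $\FF_2^{2n}$ by choosing a complement of $Q_1+Q_2$ and setting $\widetilde\xi$ to zero there. The form $\omega$ is nondegenerate on $\FF_2^{2n}$, so the map $e\mapsto\omega(\cdot,e)$ from $\FF_2^{2n}$ to its dual is injective, hence bijective by dimension count. Take $e$ to be the preimage of $\widetilde\xi$. Then $\omega(s,e)=\xi_1(s)$ for $s\in Q_1$ and $\omega(s,e)=\xi_2(s)$ for $s\in Q_2$, as required.

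Equivalently, one can choose a basis of $Q_1+Q_2$ and apply the surjectivity argument of \cref{lem:generic} inside $\FF_2^{2n}$ to prescribe the values of $\omega(\cdot,e)$ on that basis.
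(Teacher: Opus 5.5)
Your proof is correct and follows the paper's argument. You glue $\xi_1$ and $\xi_2$ to a functional on $\spn(BC)+\spn(DC)$, using that they agree on the intersection $\spn(C)$, then extend it to $\FF_2^{2n}$ and represent the extension as $\omega(\cdot,e)$ by nondegeneracy; the only difference is that you write out the well-definedness check for the gluing, which the paper leaves implicit.
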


\begin{proof}
  Since $\xi_1$ and $\xi_2$ agree on the intersection of their domains, there is a linear functional $\xi$ on $\spn(BC)+\spn(DC)$ extending both.
  Extend $\xi$ to $\FF_2^{2n}$.
  Because $\omega$ is nondegenerate, every linear functional on $\FF_2^{2n}$ has the form $\omega(\cdot,e)$ for a unique $e$.
\end{proof}

In the five-qubit code, take $C=\emptyset$, $B=\{g_1,g_2\}$ and $D=\{g_3,g_4\}$.
The spans of $B$ and $D$ meet only in $0$, so any two bits of syndrome on $g_1,g_2$ and any two on $g_3,g_4$ are realized together by one error;
by \cref{exa:five-qubit}, every nonzero syndrome is realized even by an error on a single qubit, for instance the syndrome $(1,0\,|\,1,1)$ by $YIIII$.
If $D$ is replaced by $\{g_1g_2\}$, the hypothesis fails, and so does the conclusion:
the syndrome bit of every error with respect to $g_1g_2$ is the sum of its bits with respect to $g_1$ and $g_2$.

\forphysicist{}
For finite-support Pauli labels, compatible syndrome data are linear functionals on the spans of the checks.
If these spans intersect only in the common base span, agreement there gives a joint syndrome, as \cref{prop:glue} states.
The five-qubit example also shows why the overlap must include linear relations:
a prescribed bit on $g_1g_2$ must equal the sum of the prescribed bits on $g_1$ and $g_2$.
Together with the other axioms of \cref{fact:kp}, the model-theoretic independence theorem characterizes simplicity~\cite{Kim-Pillay:1997}.

\forlogician{}
The statement of \cref{prop:glue} is the
counterpart, in finite dimension and over the span of an arbitrary set rather than over a model, of the independence theorem verified in the proof of \cref{thm:symp-simple}:
the whole argument is the gluing of two functionals whose domains, the spans of two sets of checks together with a common set, meet only in the span of the common set.
Its physical content is that the syndrome bits of an error with respect to a set of checks are the values of a linear functional on the span of the checks, and the consistency of prescribed syndromes is therefore a matter of linear algebra on that span;
the surjectivity of the syndrome map of linearly independent checks is \cref{lem:generic}.

\Cref{tab:dictionary} collects the translations of this section.
It concerns the Pauli group $(V,\omega)$ of \cref{sec:pauli}, in which Pauli phases are discarded.
For a positive integer $n$ and a code on $n$ qubits, $S\subseteq\FF_2^{2n}$ is the set of labels of its stabilizer group, $S^\perp$ is the centralizer of $S$ in $\FF_2^{2n}$, and $L$ is a set of logical operators, so that $S^\perp=S\oplus\spn(L)$;
in the two rows on codes, $e\in S^\perp\setminus\{0\}$ is a nonzero error label with zero syndrome.
In the general type rows, $D\subseteq B$ are parameter sets in a sufficiently saturated elementary extension of the label structure, and $e$ is one label in that extension.
The gluing row assumes that the two check spans intersect exactly in their common base span.

\begin{table}[ht]
  \centering
  \small
  \renewcommand{\arraystretch}{1.22}
  \setlength{\tabcolsep}{3pt}
  \arrayrulecolor{urlpurple!55!white}
  \begin{tabular}{@{}>{\raggedright\arraybackslash}p{0.27\textwidth}>{\raggedright\arraybackslash}p{0.51\textwidth}>{\raggedright\arraybackslash}p{0.18\textwidth}@{}}
    \toprule
    \rowcolor{palepurple}
    \textbf{Model theory} & \textbf{Label algebra and codes} & \textbf{Result} \\
    \midrule
    $(V,\omega)$ & finite-support Pauli labels, addition and commutation & \cref{sec:pauli} \\
    finite substructures & alternating spaces; commutation and linear relations & \cref{prop:age} \\
    finite-substructure homogeneity & isometries extend by finite Clifford conjugation & \cref{thm:witt-clifford} \\
    isotropic subspaces, with generator signs & stabilizer codes and Clifford encoding & \cref{cor:encoding} \\
    algebraic closure of $B$ & $\spn(B)$: generated strings modulo phases & \cref{sec:pauli} \\
    $\tp(e/B)$ & syndrome values and product relations over $B$ & \cref{thm:forking-pauli} \\
    $\tp(e/B)$ forks over $D$ & $e\in\spn(B)\setminus\spn(D)$ & \cref{thm:forking-pauli} \\
    $\tp(e/S)$ does not fork over $\emptyset$ & $e\notin S$: a logical error & \cref{cor:distance} \\
    $\tp(e/S\cup L)$ forks over $S$ & $e\notin S$: the same logical error, with $L$ added & \cref{prop:logical} \\
    independence theorem: finite counterpart & syndrome functionals agreeing on the base glue & \cref{prop:glue} \\
    \bottomrule
  \end{tabular}
  \vspace{6pt}
  \caption{From Pauli labels to stabilizer codes: model-theoretic notions and their counterparts in the label algebra and codes.}
  \label{tab:dictionary}
\end{table}

\section{Observables, mixtures and dividing}\label{sec:observables}

We now pass from a relation on labels to an observable and its mixed preparations.
This changes the structure whose theory is being studied:
a probability distribution on labels becomes one preparation, and the response records its average against a probe.
Although the Pauli and Rado label theories are both simple and unstable, their observables have different probe spaces:
linear functionals on the Pauli space and arbitrary sets of Rado vertices describe the pure probes.
We construct the two response structures and formulate the tree properties for their real-valued predicates, preparing the comparison in the following sections.

\subsection{Diagonal observables}
Let $A$ and $B$ be countable sets and let $\varphi\colon A\times B\to\{0,1\}$ be a function.
The \emph{diagonal observable} of $\varphi$ is the operator
\[
  C_\varphi=\sum_{a\in A,\,b\in B}\varphi(a,b)\,|a\rangle\langle a|\otimes
  |b\rangle\langle b|\quad\text{on }\ell^2(A)\otimes\ell^2(B).
\]
Here $|a\rangle\langle a|$ and $|b\rangle\langle b|$ are the orthogonal projections onto the lines spanned by the basis vectors of $\ell^2(A)$ and $\ell^2(B)$ indexed by $a$ and $b$, respectively.
In plain words, Alice’s register holds a label $a$, Bob’s a label $b$, and the observable reads $\varphi(a,b)$.
A density operator $\rho$ of Alice, that is, a positive operator of trace one on $\ell^2(A)$ that describes her preparation, enters only through its diagonal, which is a probability measure $\mu$ on $A$, a \emph{mixture} of her labels:
the preparation draws $a$ at random with law $\mu$.
For a fixed mixture $\mu$, the actual response is the function $r_\mu\colon B\to[0,1]$ defined by
\[
  r_\mu(b)=\sum_{a\in A}\mu(a)\varphi(a,b)\qquad(b\in B).
\]
The operator on Bob’s space obtained by averaging $C_\varphi$ over Alice’s preparation, the effective observable of $\rho$ in the terminology of Part~II~\cite{Duenez-Iovino:2026b}, is the diagonal operator of multiplication by $r_\mu$.
The probe algebra of Part~II is the unital C$^*$-algebra generated by these effective observables,
that is, the smallest algebra of operators on Bob’s space that contains them and the identity and is closed under adjoints and under limits in the operator norm;
it is the commutative C$^*$-algebra of bounded functions on $B$ generated by the constants and the indicators $b\mapsto\varphi(a,b)$, $a\in A$.
Bob’s probes are the states of the probe algebra, the positive linear functionals on it with value $1$ at the identity, and they form the probe space, which carries the weak$^*$ topology, the topology of pointwise convergence on the probe algebra.
The probe space is described through the closure $K_\varphi$ of the set of functions $\varphi(\cdot,b)$, $b\in B$, in $\{0,1\}^A$ with the product topology.
The space $K_\varphi$ is compact metrizable because $A$ is countable.
On this space Radon probability measures are exactly Borel probability measures.

\begin{lemma}\label{lem:probe-space}
  Let $A$ and $B$ be countable sets, let $\varphi\colon A\times B\to\{0,1\}$ be a function, and let $K_\varphi$ be the closure of the set of functions $\varphi(\cdot,b)$, $b\in B$, in $\{0,1\}^A$.
  Then the probe algebra of the diagonal observable
  $C_\varphi$ is isomorphic to the algebra $C(K_\varphi)$
  of continuous complex functions on $K_\varphi$, the probe space is the space $\cP(K_\varphi)$ of Radon probability measures on $K_\varphi$ with the weak$^*$ topology, and the completed response of a mixture $\mu$ is the affine continuous function $\mathfrak{r}_\mu\colon\cP(K_\varphi)\to[0,1]$ given by
  \[
    \mathfrak{r}_\mu(\nu)=\int_{K_\varphi}\sum_{a\in A}\mu(a)\lambda(a)\,d\nu(\lambda)
    \qquad(\nu\in\cP(K_\varphi)).
  \]
\end{lemma}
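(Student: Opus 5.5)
The plan is to identify the probe algebra with $C(K_\varphi)$ through Gelfand duality, using the map that sends a label $b$ to its column $\varphi(\cdot,b)$.

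\textbf{Step 1: the restriction map.} By the description recalled before the lemma, the probe algebra $\mathcal{A}$ is the unital C$^*$-subalgebra of $\ell^\infty(B)$ generated by the indicators $\chi_a(b)=\varphi(a,b)$, $a\in A$. Let $\iota\colon B\to K_\varphi$ be $\iota(b)=\varphi(\cdot,b)$, which has dense image by definition of $K_\varphi$. Composition with $\iota$ gives a map $\Phi\colon C(K_\varphi)\to\ell^\infty(B)$, $\Phi(g)=g\circ\iota$. This map is a unital $*$-homomorphism. It is isometric because $\iota(B)$ is dense and each $g$ is continuous, so $\sup_B|g\circ\iota|=\|g\|_\infty$.

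\textbf{Step 2: the image of $\Phi$.} Let $\pi_a$ be the coordinate function $\lambda\mapsto\lambda(a)$ on $K_\varphi$. Then $\Phi(\pi_a)=\chi_a$. The functions $\pi_a$, together with the constants, separate points of $K_\varphi\subseteq\{0,1\}^A$ and are closed under conjugation, being real. By Stone--Weierstrass they generate $C(K_\varphi)$ as a C$^*$-algebra. Since $\Phi$ is an isometric $*$-homomorphism, its image is closed and equals the C$^*$-algebra generated by $\one$ and the $\chi_a$, which is $\mathcal{A}$. Hence $\Phi\colon C(K_\varphi)\to\mathcal{A}$ is an isomorphism. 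This is the step that needs care: one must check that the generated algebra is exactly $\Phi(C(K_\varphi))$, and the isometry argument is what gives closedness of the image.

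\textbf{Step 3: states and responses.} By Riesz--Markov, the states of $C(K_\varphi)$ are integration against Radon probability measures. The weak$^*$ topology on states corresponds to the weak$^*$ topology on $\cP(K_\varphi)$, so the probe space is $\cP(K_\varphi)$. The effective observable of $\mu$ is $r_\mu=\sum_a\mu(a)\chi_a$. This series converges in norm in $\ell^\infty(B)$, since $\sum_a\mu(a)=1$ and $\|\chi_a\|\le1$. Its preimage under $\Phi$ is $g_\mu=\sum_a\mu(a)\pi_a$, again norm convergent, so $g_\mu$ is continuous with $g_\mu(\lambda)=\sum_a\mu(a)\lambda(a)$. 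The completed response at a probe $\nu$ is the value of $\nu$ on $r_\mu$, namely $\mathfrak{r}_\mu(\nu)=\int g_\mu\,d\nu$, which is the displayed formula.

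Affinity of $\mathfrak{r}_\mu$ in $\nu$ is linearity of the integral. Weak$^*$ continuity holds because $g_\mu\in C(K_\varphi)$. The values lie in $[0,1]$ because $0\le g_\mu\le1$.
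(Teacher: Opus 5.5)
Your proposal is correct and follows essentially the same route as the paper: restriction along $b\mapsto\varphi(\cdot,b)$ is isometric by density, Stone--Weierstrass applied to the coordinate functions identifies $C(K_\varphi)$ with the probe algebra, and Riesz representation together with uniform convergence of $\sum_a\mu(a)\lambda(a)$ gives the probe space and the response formula. You also state explicitly that the isometry makes the image closed, so that it is exactly the generated C$^*$-algebra, which the paper leaves implicit; the argument is direct and does not actually use Gelfand duality.
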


\begin{proof}
  For each $a\in A$, let $q_a$ be the coordinate function $q_a(\lambda)=\lambda(a)$ on $K_\varphi$.
  These functions are real-valued and separate points of $K_\varphi$.
  The unital complex algebra they generate is therefore closed under complex conjugation and uniformly dense in $C(K_\varphi)$ by the Stone--Weierstrass theorem.
  Restriction along $b\mapsto\varphi(\cdot,b)$ preserves the supremum norm, because this map has dense image in $K_\varphi$, and sends $q_a$ to the indicator $b\mapsto\varphi(a,b)$.
  It thus identifies $C(K_\varphi)$ with the probe algebra.

  By the Riesz representation theorem, a state of $C(K_\varphi)$ is integration against a unique Radon probability measure;
  pointwise convergence on $C(K_\varphi)$ is precisely the stated weak$^*$ topology on these measures.
  For a mixture $\mu$, the series $\sum_a\mu(a)q_a$ converges uniformly, since the norm of a tail is bounded by the corresponding probability mass.
  This function corresponds to the effective observable of the preparation with diagonal $\mu$.
  Integrating it against the measure $\nu$ gives the displayed response formula, which is affine and weak$^*$ continuous in $\nu$.
\end{proof}

The pure probes are the point masses $\delta_\lambda$, $\lambda\in K_\varphi$, and we identify each of them with its point $\lambda$;
a pure probe is thus an \emph{idealized label} of Bob, recording which of Alice’s labels it answers with $1$.
The actual labels, the functions $\varphi(\cdot,b)$ for $b\in B$, are dense among them, since $K_\varphi$ is the closure of the set of these functions.
For $\lambda\in K_\varphi$, we write $\mathfrak{r}_\mu(\lambda)$ as shorthand for $\mathfrak{r}_\mu(\delta_\lambda)$, the completed response at the corresponding pure probe.
For every actual label $b\in B$, the completed response recovers the actual response:
\[
  \mathfrak{r}_\mu(\delta_{\varphi(\cdot,b)})=r_\mu(b).
\]
The response $r_\mu$ is defined on $B$, whereas the completed response $\mathfrak{r}_\mu$ is defined on the probe space, which contains the point masses of the actual labels.

\subsection{The structure of an observable}
The theory of the observable concerns its mixtures and probes, rather than its label structure alone.
For a diagonal observable $C_\varphi$, let $\mathcal{M}_\varphi$ be the following two-sorted structure.
Its preparation sort consists of Alice’s probability mixtures on $A$, and its probe sort is $\cP(K_\varphi)$.
Its only predicate is the response $R(\mu,\nu)=\mathfrak{r}_\mu(\nu)$ with values in $[0,1]$.
As in Part~II we use Keisler’s model theory for real-valued structures~\cite{Keisler:2023}, without prescribing a metric in advance.
We call $C_\varphi$ \emph{simple} when the real-valued theory of $\mathcal{M}_\varphi$ is simple in the sense of Ben Yaacov~\cite{BenYaacov:2003b}.
An elementary extension is a larger structure in which every formula with parameters from $\mathcal{M}_\varphi$ retains its value;
its new elements of the preparation sort are called idealized preparations.
For the Pauli observable, the error labels of arbitrary support already occur as pure probes in the original structure, as we show in \cref{prop:channels};
elementary extension is a further enlargement.
Every tree property of the theory of $\mathcal{M}_\varphi$ is witnessed in an $\aleph_1$-saturated elementary extension.
Here $\aleph_1$-saturation means that a countable set of conditions with parameters is realized whenever each finite subset can be satisfied within every positive error.
We say that actual preparations witness the property when the preparation parameters of a witnessing array lie in $\mathcal{M}_\varphi$ itself and the finite parts of its paths are realized in the original probe sort, in the sense of \cref{def:family-tp}.
For the Pauli observable, this does not require a whole infinite path to be realized by an error of finite support.

\subsection{Dividing for families of functions}
The passage from truth values to expectations expresses inconsistency through uniformly separated real values.
As in Part~II, an independent sequence of responses realizes every finite high/low pattern at two fixed thresholds.
For the tree properties, individual conditions are satisfiable, but fixed-size subfamilies within rows or sibling sets are uniformly inconsistent, while finite portions of each path remain jointly satisfiable.
The common positive gap supplies the inequalities required in the real-valued formulation, and these finite configurations persist in elementary extensions of the response structure $\mathcal{M}_\varphi$.
We state the function-family formulation before constructing the arrays of mixtures.

\begin{definition}\label{def:family-tp}
  Let $Y$ be a set and $\cR$ a family of real functions on $Y$.
  \begin{enumerate}[label=\textup{(\arabic*)}]
  \item
    For an integer $k\ge2$, a sequence $(r_j)_{j\in\NN}$ in $\cR$ is \emph{uniformly
      $k$-inconsistent} if there are reals $\alpha<\beta$ such that each $r_j$ takes a value $\ge\beta$ somewhere on $Y$, and for all $j_1<\dots<j_k$ no $y\in Y$ satisfies $r_{j_\ell}(y)>\alpha$ for $\ell=1,\dots,k$.
  \item
    The family $\cR$ has the \emph{tree property of the second kind}
    (TP$_2$) if there are $k\ge2$, reals $\alpha<\beta$
    and an array $(r_{ij})_{i,j\in\NN}$ in $\cR$ such that every row is uniformly $k$-inconsistent with the constants $\alpha$ and $\beta$, and for every $f\colon\NN\to\NN$ and every $n\in\NN$ there is $y\in Y$ with $r_{if(i)}(y)\ge\beta$ for all $i<n$.
  \end{enumerate}
\end{definition}

The tree property of \cref{def:dividing}, its variant of the first kind, and TP$_2$ all require infinite sequences of instances that are $k$-inconsistent, for some integer $k\ge2$:
the children of a node, or a row.
For a family of functions, we define the tree property and the tree property of the first kind from \cref{def:dividing} in the same way as TP$_2$ in \cref{def:family-tp}:
fix real thresholds $\alpha<\beta$ and express branch consistency by simultaneous values $\ge\beta$ for every finite set of functions along the branch.
Express inconsistency of a specified finite set by the absence of a point where all its functions have values $>\alpha$.
In each of the three properties, the children of a node, or the members of a row, then form a uniformly $k$-inconsistent sequence, with $k=2$ for the tree property of the first kind.
A family that, for no integer $k\ge2$, contains a uniformly $k$-inconsistent sequence therefore has none of the three tree properties.
Applied to the family of responses of an observable, condition~(2) of \cref{def:family-tp} says that actual preparations witness the tree property of the second kind of the atomic formula, in the real-valued logic of Part~II;
the gap between the constants $\alpha$ and $\beta$ of \cref{def:family-tp} is what the definitions of dividing in real-valued logic require \cite{BenYaacov:2003b}.

\subsection{The Pauli observable}
The \emph{Pauli observable} is the diagonal observable of the symplectic form
$\omega$ on $V\times V$, where $V$ is the space of labels of the Pauli strings on countably many qubits that act nontrivially on only finitely many of them (see \cref{sec:pauli}):
Alice and Bob each hold a Pauli label, and the observable reports whether the two Pauli strings anticommute.
We call Alice’s label the check and Bob’s label the error;
the observable then reports one syndrome bit.
For a mixture $\mu$, the response at an error $e$ is the probability that a check drawn with law $\mu$ anticommutes with $e$.
By \cref{lem:generic}, which provides an error with any prescribed syndrome with respect to finitely many linearly independent checks, the observable has the independence property already on pure preparations.

\begin{lemma}\label{lem:pauli-probes}
  For the Pauli observable, the space $K_\omega$ is the group $V^*$ of all linear functionals $V\to\FF_2$.
\end{lemma}

\begin{proof}
  Each $\omega(\cdot,e)$ is linear, and the linear functionals form a closed subset of $\{0,1\}^V$.
  Conversely, a basic neighborhood of a linear functional $\lambda$ prescribes the values of $\lambda$ on finitely many vectors, which may be taken linearly independent, and \cref{lem:generic} gives $e$ with $\omega(\cdot,e)$ in the neighborhood.
\end{proof}

The functionals of $V^*$ have a direct physical meaning.
Let $\hat{V}=\prod_{k\in\NN}\FF_2^2$, with the product topology, be the group of labels of Pauli strings of arbitrary support, the support of a string being the set of qubits on which it acts nontrivially;
it contains $V$ as the strings of finite support.
For $a\in V$ and $u\in\hat{V}$, the value $\omega(a,u)$ is defined by the formula of \cref{sec:pauli}, which is a finite sum because $a$ has finite support.
For $p$ with $0<p\le1$, let $\nu_p$ be the \emph{depolarizing noise} of rate $p$:
the product measure on $\hat{V}$ under which the labels on distinct qubits are independent, each equal to $(0,0)$ with probability $1-p$ and to each of $(1,0)$, $(0,1)$ and $(1,1)$ with probability $p/3$.
A Radon probability measure on $\hat{V}$ specifies a \emph{Pauli channel} on the norm closure of the algebra of operators supported on finitely many qubits:
on each finite set of qubits, the channel averages conjugation by the Pauli strings given by the restrictions of the random label to that set.
We identify the channel with this probability law.
An arbitrary-support label thus specifies compatible actions on all finite sets of qubits;
no infinite tensor product unitary on a chosen Hilbert space is required.

\begin{proposition}\label{prop:channels}
  Let $\hat{V}$ be the group of labels of Pauli strings of arbitrary support, and for $0<p\le1$ let $\nu_p$ be the depolarizing noise of rate $p$.
  Then the following hold:
  \begin{enumerate}[label=\textup{(\alph*)}]
  \item
    the map $u\mapsto\omega(\cdot,u)$ is an isomorphism of topological groups from $\hat{V}$ onto $V^*$, which carries $V$ onto the functionals of the actual errors;
  \item
    the probes of the Pauli observable are the Pauli channels on countably many qubits, and the response of a mixture
    $\mu$ at a Pauli channel $\nu$ is the probability that a check drawn with law $\mu$ anticommutes with an independent error drawn with law $\nu$;
  \item
    the depolarizing noise $\nu_p$ is concentrated on errors of infinite support, and for a label $a\in V$ that acts nontrivially on $w$ qubits we have
    \[
      \mathfrak{r}_{\delta_a}(\nu_p)=\tfrac12\bigl(1-(1-\tfrac{4p}3)^w\bigr).
    \]
  \end{enumerate}
\end{proposition}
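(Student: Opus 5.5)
For (a), I will first check that $\Phi\colon u\mapsto\omega(\cdot,u)$ is a well-defined continuous homomorphism $\hat V\to V^*$. For fixed $a\in V$, the value $\omega(a,u)$ depends only on the coordinates of $u$ on the finite support of $a$. Hence each coordinate $u\mapsto\omega(a,u)$ of $\Phi$ is continuous and additive, and $\Phi$ is continuous into $\{0,1\}^V$, landing in the linear functionals.

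\emph{Injectivity.} If $u\neq0$, some qubit $k$ has $u_k\neq(0,0)$. The symplectic form on $\FF_2^2$ at qubit $k$ is nondegenerate, so some label $a$ supported on qubit $k$ has $\omega(a,u)=1$.

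\emph{Surjectivity.} Given $\lambda\in V^*$, define $u_k=(x_k,z_k)$ by $z_k=\lambda(X_k)$ and $x_k=\lambda(Z_k)$, where $X_k$ and $Z_k$ denote the labels of $X$ and $Z$ on qubit $k$. The functionals $\lambda$ and $\omega(\cdot,u)$ then agree on the basis $\{X_k,Z_k\}$ of $V$, hence everywhere by linearity.

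Since $\hat V$ is compact and $V^*\subseteq\{0,1\}^V$ is Hausdorff, the continuous bijection $\Phi$ is a homeomorphism. The elements $u\in V$ are by definition sent to the functionals $\omega(\cdot,e)$ of actual errors. This, combined with \cref{lem:pauli-probes}, identifies $K_\omega$ with $\hat V$.

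For (b), I will combine (a) with \cref{lem:probe-space}. That lemma identifies probes with Radon probability measures on $K_\omega$, and the homeomorphism of (a) transports these to Radon probability measures on $\hat V$, which by definition are the Pauli channels. Under this transport, the response formula of \cref{lem:probe-space} becomes $\int_{\hat V}\sum_a\mu(a)\omega(a,u)\,d\nu(u)$. Fubini (the sum has nonnegative terms and is bounded) rewrites this as $\mathbb P(\omega(a,u)=1)$ for independent draws $a\sim\mu$ and $u\sim\nu$.

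For (c), the infinite-support claim is Borel--Cantelli. The events $u_k\neq(0,0)$ are independent with probability $p>0$ each, so almost surely infinitely many occur, and therefore $\nu_p(V)=0$.

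For the formula, let $S_a$ be the support of $a$, with $|S_a|=w$. Then $\omega(a,u)=\sum_{k\in S_a}\omega(a_k,u_k)$ is a sum of independent bits. For each $k\in S_a$, the label $a_k\neq0$ anticommutes with exactly two of the three nonzero single-qubit labels, so the bit $\omega(a_k,u_k)$ equals $1$ with probability $q=2p/3$. Writing $\mathbb E$ for expectation under $\nu_p$, we get $\mathbb E[(-1)^{\omega(a_k,u_k)}]=1-2q=1-\tfrac{4p}{3}$. By independence, $\mathbb E[(-1)^{\omega(a,u)}]=(1-\tfrac{4p}3)^w$. Finally,
\[
\mathfrak r_{\delta_a}(\nu_p)=\nu_p(\omega(a,u)=1)=\tfrac12\bigl(1-\mathbb E[(-1)^{\omega(a,u)}]\bigr)=\tfrac12\bigl(1-(1-\tfrac{4p}3)^w\bigr).
\]

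None of these steps is a real obstacle. The most care is needed in surjectivity in (a), where the identification of $K_\omega$ with $V^*$ from \cref{lem:pauli-probes} has to be composed correctly with the coordinate assignment $z_k=\lambda(X_k)$, $x_k=\lambda(Z_k)$.
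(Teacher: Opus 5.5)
Your proposal is correct and follows the paper's proof essentially step for step. In (a) both arguments use finite dependence for continuity, the coordinate identities $\omega(X_k,u)=z_k$ and $\omega(Z_k,u)=x_k$ for injectivity and surjectivity, and the compact-to-Hausdorff argument for the homeomorphism; (b) is the transport of \cref{lem:probe-space} followed by Fubini; and (c) is Borel--Cantelli together with the per-qubit anticommutation probability $2p/3$. Your computation of the expected sign $(1-\tfrac{4p}{3})^w$ simply makes explicit the parity count that the paper states directly.
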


\begin{proof}
  (a) The map is a homomorphism.
  Write $X_k$ and $Z_k$ for the labels of the strings $X$ and $Z$ on the qubit $k$.
  Then the values $\omega(X_k,u)$ and $\omega(Z_k,u)$ are the two coordinates of
  $u$ at the qubit $k$, so the map is injective, and every
  $\lambda\in V^*$ is the image of the vector $u=(x,z)$ with $z_k=\lambda(X_k)$ and $x_k=\lambda(Z_k)$ for every $k$, because $\lambda$ and $\omega(\cdot,u)$ are linear and agree on the basis $\{X_k,Z_k:k\in\NN\}$ of $V$.
  The value $\omega(a,u)$ depends on finitely many coordinates of $u$, so the map is continuous, and a continuous bijection between compact Hausdorff spaces is a homeomorphism.

  (b) By \cref{lem:probe-space}, \cref{lem:pauli-probes} and~(a), the probes are the Radon probability measures on $\hat{V}$, and $\mathfrak{r}_\mu(\nu)=\int\sum_a\mu(a)\,\omega(a,u)\,d\nu(u)$, which is the probability stated, by Fubini’s theorem.

  (c) Under $\nu_p$, the events “the error acts nontrivially on the qubit $k$” are independent and have probability $p$, so, by the second Borel--Cantelli lemma, almost every error acts nontrivially on infinitely many qubits.
  At each of the $w$ qubits where the string with label $a$ acts nontrivially, its factor anticommutes with two of the three nontrivial Pauli matrices, hence with the factor of the error with probability $2p/3$, independently of the other qubits.
  The strings anticommute exactly when an odd number of these $w$ events occur, and this has probability $\frac12\bigl(1-(1-\frac{4p}3)^w\bigr)$.
\end{proof}

In what follows we identify $V^*$ with $\hat{V}$ through \cref{prop:channels}(a) and call its elements \emph{idealized errors}.
By \cref{prop:channels}(c), the depolarizing noise $\nu_p$ of each rate $p$, which models the noise of a memory on countably many qubits, is a probe of the Pauli observable concentrated on idealized errors that are not actual:
with probability one, an error drawn from $\nu_p$ acts on infinitely many qubits.
Alice’s checks see such an error through finitely many qubits at a time, and the response of a check of weight $w$, one that acts nontrivially on $w$ qubits, tends to $\frac12$, the value of a fair coin, as $w$ grows.
For $a\in V$, let $\chi_a$ be the character of $V^*$ given by $\chi_a(\lambda)=(-1)^{\lambda(a)}$, a continuous homomorphism of $V^*$ into the multiplicative group $\{\pm1\}$.
Alice’s pure responses are the characters of this compact group up to an affine change:
the pure response $\mathfrak{r}_{\delta_a}$ of a label $a$ takes the value $\lambda(a)=(1-\chi_a(\lambda))/2$ at an idealized error $\lambda$.

\subsection{The Rado observable}\label{sec:rado-obs}
The \emph{Rado observable} is the diagonal observable of the edge relation $E$ on $R\times R$, where $(R,E)$ is the Rado graph of \cref{sec:pauli}:
Alice and Bob each hold a site of a random network of couplings, and the observable reports whether the two sites are coupled.
By the extension property of the Rado graph, which provides a vertex adjacent to every vertex of a finite set and to none of another finite set disjoint from it, the observable has the independence property already on pure preparations.
We say that the diagonal observable $C_\varphi$ of a function
$\varphi\colon A\times B\to\{0,1\}$ has \emph{freely independent} pure responses when $K_\varphi=\{0,1\}^A$, that is, when every pattern of answers to all of Alice’s labels is an idealized label of Bob.

\begin{lemma}\label{lem:rado-probes}
  For the Rado observable, the space $K_E$ is the whole Cantor cube $\{0,1\}^R$.
  Hence the Rado observable has freely independent pure responses, while the Pauli observable does not.
\end{lemma}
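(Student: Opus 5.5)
The plan has two parts. First, $K_E$ is dense in $\{0,1\}^R$, and it is closed by definition, so it is the whole Cantor cube. Second, $K_\omega$ is a proper subset of $\{0,1\}^V$, which gives the contrast with the Pauli observable.

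For the first part, recall that $K_E$ is the closure in $\{0,1\}^R$ of the functions $E(\cdot,b)$, $b\in R$. Take a basic open set of $\{0,1\}^R$. It prescribes values $\varepsilon_1,\dots,\varepsilon_n$ at finitely many distinct vertices $a_1,\dots,a_n$. Let $R_1=\{a_i:\varepsilon_i=1\}$ and $R_0=\{a_i:\varepsilon_i=0\}$. These are finite disjoint sets, so the extension property of the Rado graph from \cref{sec:pauli} gives a vertex $b$, outside $R_1\cup R_0$, adjacent to every vertex of $R_1$ and to none of $R_0$. The choice of $b$ outside these sets matters, because $E$ is irreflexive and the prescription at $a_i=b$ would otherwise be constrained. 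Then $E(\cdot,b)$ lies in the given neighborhood. Hence the actual labels are dense, and $K_E=\{0,1\}^R$. This says exactly that the Rado observable has freely independent pure responses.

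For the Pauli observable, \cref{lem:pauli-probes} identifies $K_\omega$ with $V^*$, the set of linear functionals $V\to\FF_2$. It suffices to exhibit a point of $\{0,1\}^V$ that is not linear. The function that is $1$ at $0\in V$ and $0$ elsewhere will do, since every linear functional vanishes at $0$. Equivalently, any $\lambda$ with $\lambda(a)=\lambda(b)=1$ and $\lambda(a+b)=1$ for some $a,b$ fails additivity. So $K_\omega\neq\{0,1\}^V$.

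There is no real obstacle here. The only point needing care is the choice of $b$ outside the prescribed finite set, and the extension property as stated already supplies it.
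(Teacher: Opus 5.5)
Your proof is correct and follows the paper's. The first claim is the density of the actual labels, obtained from the extension property of the Rado graph, and the second is $K_\omega=V^*\subsetneq\{0,1\}^V$ by \cref{lem:pauli-probes}. Your explicit non-linear point, the indicator of $\{0\}$, only makes concrete the properness that the paper asserts without comment.
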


\begin{proof}
  A basic open subset of $\{0,1\}^R$ prescribes adjacency to the vertices of a finite set $R_1$ and nonadjacency to those of a finite set $R_0$, and the extension property gives a vertex outside $R_1\cup R_0$ with that pattern.
  By \cref{lem:pauli-probes}, the space $K_\omega$ is the group $V^*$, which is a proper subset of $\{0,1\}^V$.
\end{proof}

The pure probes of the Rado observable are arbitrary sets of neighbors, that is, arbitrary subsets of $R$, and its probes are random sets of neighbors.
The response of a mixture $\mu$ at a probe $\nu$ is the probability that a site drawn with law $\mu$ lies in a set of neighbors drawn, independently, with law $\nu$;
it depends on $\nu$ only through the marginals $\nu\{\lambda:\lambda(a)=1\}$, for $a\in R$, the probabilities that the individual sites are neighbors.
The Pauli observable sees more of a probe:
its pure responses are, up to an affine change, the characters of the compact group of idealized errors, and products of characters are again characters.
In \cref{sec:pauli-tp2,sec:free} we show what this difference does to the tree property.

\section{The theory of mixed preparations has the tree property of the second kind}\label{sec:collapse}

Passing from labels to mixtures changes the structure whose theory is being considered.
For a diagonal observable with the independence property, we show that the real-valued theory of preparations and probes has the tree property of the second kind, witnessed by the difference of two responses.
The construction compares the uniform mixture on a half of a finite set of labels with the uniform mixture on its complement.
Their difference is $1$ precisely when the probe answers $1$ on the first half and $0$ on the second;
two halves meeting in a quarter of the set cannot both give a difference greater than $1/2$.
The independence property supplies compatible answers on disjoint blocks, yielding finite arrays of actual preparation parameters of arbitrarily large size.
Compactness then supplies the infinite array in an elementary extension (see \cref{thm:collapse});
whether its parameters can all be actual preparations depends on the observable.

The construction uses the independence property on actual labels, whereas NIP for the observable concerns all mixed responses on the probe space.
Recall the fixed-gap definition from Part~II for a family $\cR$ of real functions on a set $Y$.
A sequence $(r_n)_{n\in\NN}$ in $\cR$ is independent if there are real thresholds $\alpha<\beta$ with the following property.
For all disjoint finite sets $N_0,N_1\subseteq\NN$, some $y\in Y$ satisfies $r_n(y)\le\alpha$ for $n\in N_0$ and $r_n(y)\ge\beta$ for $n\in N_1$.
The family has the independence property when it contains such a sequence, and is NIP otherwise.
We first separate the analytic fact that identifies the two conditions.
For a diagonal observable $C_\varphi$ of a function $\varphi\colon A\times B\to\{0,1\}$, recall from \cref{lem:probe-space} that $K_\varphi$ is the closure of $\{\varphi(\cdot,b):b\in B\}$ in $\{0,1\}^A$ and that the probe space is $\cP(K_\varphi)$.
For $a\in A$, let $f_a(\lambda)=\lambda(a)$ for $\lambda\in K_\varphi$, and, for a probability distribution $\mu$ on $A$, let $g_\mu=\sum_{a\in A}\mu(a)f_a$;
by \cref{lem:probe-space}, the response of $\mu$ is $\mathfrak{r}_\mu(\nu)=\int g_\mu\,d\nu$ for $\nu\in\cP(K_\varphi)$.

\begin{lemma}[NIP for pure and mixed responses]\label{lem:diagonal-nip-transfer}
  Let $A$ and $B$ be countable sets and let $C_\varphi$ be the diagonal observable of a function $\varphi\colon A\times B\to\{0,1\}$.
  Then the following are equivalent:
  \begin{enumerate}[label=\textup{(\roman*)}]
  \item
    the family $\{\varphi(a,\cdot):a\in A\}$ is NIP on $B$;
  \item
    the family $\{f_a:a\in A\}$ of pure responses is NIP on $K_\varphi$;
  \item
    the family $\{\mathfrak{r}_\mu:\mu\text{ is a probability distribution on }A\}$ of mixed responses is NIP on the probe space $\cP(K_\varphi)$.
  \end{enumerate}
\end{lemma}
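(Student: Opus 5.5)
The proof will go around the cycle (iii)$\Rightarrow$(ii)$\Rightarrow$(i)$\Rightarrow$(iii), the first two arrows being formal and the last being the substantive one.

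\textbf{The formal arrows.} For (iii)$\Rightarrow$(ii), an independent sequence $(f_{a_n})$ on $K_\varphi$ with thresholds $\alpha<\beta$ is also an independent sequence of mixed responses $\mathfrak{r}_{\delta_{a_n}}$. Indeed, $\mathfrak{r}_{\delta_a}(\delta_\lambda)=f_a(\lambda)$, so one evaluates at the point masses $\delta_\lambda$. For (ii)$\Rightarrow$(i), an independent sequence $\varphi(a_n,\cdot)$ on $B$ gives an independent sequence $f_{a_n}$ on $K_\varphi$, since $f_a(\varphi(\cdot,b))=\varphi(a,b)$. Both contrapositives are therefore immediate.

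\textbf{The main step, (i)$\Rightarrow$(iii).} Assume the $\{0,1\}$-valued family $\{\varphi(a,\cdot)\}$ is NIP on $B$. I first pass to $K_\varphi$. A pattern realized by some $\lambda\in K_\varphi$ on finitely many coordinates $a_1,\dots,a_n$ is realized by some $\varphi(\cdot,b)$, by density. Hence $\{f_a\}$ is a NIP family of continuous $\{0,1\}$-valued functions on the compact space $K_\varphi$.

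The key analytic fact I would invoke is the Bourgain--Fremlin--Talagrand dichotomy, which characterizes NIP of a uniformly bounded family of continuous functions on a compact space: the family is NIP at every gap exactly when it is relatively compact in the space of Borel functions on $K_\varphi$ with the pointwise topology. The paper calls such a family Rosenthal; this is presumably the fact already used in Part~II. Two closure properties of this condition then carry NIP from $\{f_a\}$ to the responses.

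\begin{enumerate}
\item \textbf{Closed convex hulls.} The pointwise-closed convex hull of a relatively compact set of Borel functions is again of this kind, and is therefore NIP. This is a theorem of Bourgain--Fremlin--Talagrand (or follows via Rosenthal's $\ell_1$ theorem). Each $g_\mu$ is a norm-convergent convex combination of the $f_a$, so the family $\{g_\mu\}$ is NIP on $K_\varphi$.
\item \textbf{Passage to measures.} For a family of functions on $K_\varphi$ that is relatively compact among Borel functions, the integrals $\nu\mapsto\int g\,d\nu$ form a NIP family on $\cP(K_\varphi)$. Equivalently, a bounded sequence in $C(K)$ with no weakly Cauchy subsequence spans $\ell_1$, and the same is then true of its image.
\end{enumerate}

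For the second closure property I would argue concretely. Suppose $(\mathfrak{r}_{\mu_n})$ were independent on $\cP(K_\varphi)$ at gap $\alpha<\beta$. By Rosenthal's $\ell_1$ theorem, the sequence $(g_{\mu_n})$ in $C(K_\varphi)$ would be equivalent to the $\ell_1$ basis. On the other hand, pointwise relative compactness of the convex hull of $\{f_a\}$ among Borel functions yields a pointwise convergent subsequence of $(g_{\mu_n})$. By dominated convergence that subsequence is weakly Cauchy in $C(K_\varphi)$, and a weakly Cauchy subsequence contradicts the $\ell_1$ behaviour forced by independence.

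\textbf{Expected obstacle.} The main obstacle is this step: making the weak-Cauchy argument rigorous and citing the correct form of the Bourgain--Fremlin--Talagrand convex-hull theorem. I would try first to reduce everything to the single statement ``$\{f_a\}$ NIP $\iff$ every sequence in it has a weakly Cauchy subsequence in $C(K_\varphi)$''. That statement passes to norm-closed convex hulls by Rosenthal's theorem, and weak Cauchyness is exactly pointwise convergence of the integrals against every $\nu\in\cP(K_\varphi)$.
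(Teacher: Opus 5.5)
Your proposal is correct. The reduction in your last paragraph is the paper's argument: Rosenthal's dichotomy and dominated convergence make $\{f_a:a\in A\}$ weakly precompact in $C(K_\varphi)$, this passes to the norm-closed absolutely convex hull containing every $g_\mu$, and a weakly Cauchy subsequence of $(g_{\mu_n})$ converges pointwise on $\cP(K_\varphi)$, which an independent sequence of continuous responses on this compact space cannot do (your detour through $\ell_1$-equivalence is a valid alternative for this last step). One correction: the convex-hull step does not follow from Rosenthal's $\ell_1$ theorem alone. It is Stegall's theorem that the absolutely convex hull of a weakly precompact set is weakly precompact, which the paper cites via Rodr\'iguez, or equivalently the Bourgain--Fremlin--Talagrand convex-hull result you invoke first; cite that rather than ``Rosenthal's theorem''.
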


\begin{proof}
  The functions $f_a$ take only the values $0$ and $1$, so independence for them is equivalent to realizing every finite pattern of these two values.
  Each such pattern specifies a clopen subset of $K_\varphi$.
  If it is nonempty, it meets the dense set of actual labels $\{\varphi(\cdot,b):b\in B\}$.
  Conversely, every pattern realized by an actual label is realized in $K_\varphi$.
  This proves (i)$\Leftrightarrow$(ii).
  If the pure responses are independent, the mixed responses associated with the point masses on the same labels are independent, by evaluation at the pure probes $\delta_\lambda$.
  Thus (iii) implies (ii).

  Assume (ii).
  Rosenthal’s combinatorial dichotomy for uniformly bounded sequences of real functions~\cite{Rosenthal:1974} gives every sequence of pure responses a subsequence that converges pointwise on $K_\varphi$, since an independent subsequence is excluded.
  This subsequence is weakly Cauchy in the real Banach space $C(K_\varphi)$ of continuous real-valued functions with the supremum norm:
  its values under each continuous linear functional form a Cauchy sequence.
  Indeed, every such functional is integration against a finite signed Radon measure, and the functions are bounded by $1$, so dominated convergence applies.
  Hence $\{f_a:a\in A\}$ is weakly precompact, meaning that it is bounded and every sequence in it has a weakly Cauchy subsequence.

  As in Part~II~\cite{Duenez-Iovino:2026b}, we use Stegall’s theorem: the absolutely convex hull of a weakly precompact set is weakly precompact (see \cite{Rodriguez:2021}).
  Here the absolutely convex hull consists of the finite sums $\sum_j c_j f_{a_j}$ with $\sum_j|c_j|\le1$.
  Its norm closure is also weakly precompact:
  given a sequence $(h_n)_{n\in\NN}$ in the closure, choose functions $q_n$ in the hull with $\|h_n-q_n\|_\infty<2^{-n}$ for $n\in\NN$ and a weakly Cauchy subsequence of $(q_n)$;
  the corresponding subsequence of $(h_n)$ is weakly Cauchy as well.
  Each $g_\mu$ belongs to this closure, because its finite partial sums lie in the hull and converge uniformly, with the omitted probability mass bounding the error.

  Every sequence $(g_{\mu_n})$ therefore has a weakly Cauchy subsequence.
  Since integration against each probe $\nu\in\cP(K_\varphi)$ is a continuous linear functional on $C(K_\varphi)$, the corresponding subsequence of responses $(\mathfrak{r}_{\mu_n})$ converges pointwise on the probe space.
  An independent sequence of these continuous responses cannot have a pointwise convergent subsequence.
  To see this, prescribe alternating low and high values along any subsequence at its fixed thresholds.
  Every finite prescription is realized, and the associated closed subsets of the compact probe space have the finite intersection property;
  hence one probe realizes the entire alternating prescription, preventing convergence there.
  Thus the mixed family is NIP, proving (iii).
\end{proof}

We now use an independent sequence of actual labels to construct the arrays of mixtures.

\begin{keytheorem}[Mixed preparations and the tree property of the second kind]\label{thm:collapse}
  Let $C_\varphi$ be the diagonal observable of a function $\varphi\colon A\times B\to\{0,1\}$ on countable sets $A$ and $B$, and assume that there are distinct labels
  $a_0,a_1,\ldots$ in $A$ such that for all finite disjoint $N_1,N_0\subseteq\NN$ there is a label $b\in B$ with $\varphi(a_n,b)=1$ for $n\in N_1$ and $\varphi(a_n,b)=0$ for $n\in N_0$.
  Then the affine formula
  \[
    \psi(x;y_1,y_2)=R(y_1,x)-R(y_2,x),
  \]
  in which the predicate $R$ is the response, the variable
  $x$ is of the sort of probes and the variables $y_1,y_2$
  are of the sort of preparations, has the tree property of the second kind in the real-valued theory of the structure $\mathcal{M}_\varphi$ of Alice’s mixtures and Bob’s probes:
  in an $\aleph_1$-saturated elementary extension of $\mathcal{M}_\varphi$, the family of instances of $\psi$, as functions on the probes, has the tree property of the second kind in the sense of \cref{def:family-tp}, with $k=2$, $\alpha=1/2$ and $\beta=1$.
  Moreover, every simple diagonal observable is NIP (see \cref{lem:diagonal-nip-transfer}).
\end{keytheorem}

\begin{proof}
  We first build finite arrays in $\mathcal{M}_\varphi$ itself and then pass to an elementary extension by compactness.

  Let $N\ge1$ be an integer, and choose an integer $t$ with $2^t>N$.
  Choose $2^tN$ of the labels $a_n$ and partition them into $N$ blocks $J_0,\dots,J_{N-1}$ of size $2^t$, and identify each block with $\FF_2^t$.
  Choose distinct nonzero vectors $w_0,\dots,w_{N-1}$ in $\FF_2^t$, and for $i,j<N$ let $H_{ij}\subseteq J_i$ be the set of $v\in\FF_2^t$ with $w_j\cdot v=1$, where the dot product $w_j\cdot v$ is computed in $\FF_2$.
  Each $H_{ij}$ has $2^{t-1}$ elements, and for $j\ne j'$ the set $H_{ij}\cap H_{ij'}$ has $2^{t-2}$ elements, since $w_j$ and $w_{j'}$ are linearly independent.
  Let $\mu^+_{ij}$ and $\mu^-_{ij}$ be the uniform probability measures on $H_{ij}$ and on $J_i\setminus H_{ij}$.

  For a probe $\nu$, write $\pi_a(\nu)=\nu\{\lambda:\lambda(a)=1\}\in[0,1]$ for its marginal at the label $a$;
  by \cref{lem:probe-space}, the response is then $R(\mu,\nu)=\sum_a\mu(a)\pi_a(\nu)$.
  With $m=2^t$, we have
  \[
    \psi(\nu;\mu^+_{ij},\mu^-_{ij})=\frac2m\Bigl(\sum_{a\in H_{ij}}\pi_a(\nu)
    -\sum_{a\in J_i\setminus H_{ij}}\pi_a(\nu)\Bigr)\le1 .
  \]
  For $j\ne j'$ in the same row, adding the two values gives
  \[
    \frac4m\Bigl(\sum_{a\in H_{ij}\cap H_{ij'}}\pi_a(\nu)-\sum_{a\in J_i\setminus(H_{ij}\cup
      H_{ij'})}\pi_a(\nu)\Bigr)\le\frac4m\cdot\frac m4=1 ,
  \]
  so the two values are never both greater than $1/2$.
  For every function $f$ from $\{0,\dots,N-1\}$ to itself, the hypothesis applied to the finite disjoint sets
  $\bigcup_iH_{if(i)}$ and $\bigcup_i(J_i\setminus H_{if(i)})$ gives a label $b$, and at the pure probe of $\varphi(\cdot,b)$ every value $\psi(\cdot;\mu^+_{if(i)},\mu^-_{if(i)})$ with $i<N$ equals $1$.

  Thus, for every $N$, the structure $\mathcal{M}_\varphi$ satisfies the following conditions on $2N^2$ preparations $y^\pm_{ij}$, for $i,j<N$:
  for every function $f$ from $\{0,\dots,N-1\}$ to itself, the value of $\sup_x\min_{i<N}\psi(x;y^+_{if(i)},y^-_{if(i)})$ is $1$;
  and for all $i<N$ and all distinct $j,j'<N$, the value of $\sup_x\min\bigl(\psi(x;y^+_{ij},y^-_{ij}),\psi(x;y^+_{ij'},y^-_{ij'})\bigr)$ is at most $1/2$.
  Impose these closed conditions on an infinite array of preparation variables.
  Every finite subset of the conditions concerns a finite subarray and is satisfied by one of the arrays just constructed, so compactness yields an array $(y^\pm_{ij})_{i,j\in\NN}$ in an elementary extension of $\mathcal{M}_\varphi$.
  Pass to an $\aleph_1$-saturated elementary extension containing this array.
  For each $f\colon\NN\to\NN$, the countable set of conditions $\psi(x;y^+_{if(i)},y^-_{if(i)})=1$, for $i\in\NN$, is approximately satisfiable on every finite subset, by the supremum conditions;
  saturation therefore realizes all of them at one probe.
  Together with the row bound, this gives the tree property of the second kind, which is incompatible with simplicity in real-valued logic \cite{BenYaacov:2003b}.

  For the final assertion, suppose that the diagonal observable is not NIP.
  By \cref{lem:diagonal-nip-transfer}, its pure responses on the actual labels $B$ have an independent sequence.
  Since these responses take only the values $0$ and $1$, the sequence satisfies the hypothesis of the theorem, and the construction above gives TP$_2$ in the theory of $\mathcal{M}_\varphi$.
  The theory is therefore not simple, proving the assertion by contraposition.
\end{proof}

The array that witnesses the tree property of the second kind in the proof of \cref{thm:collapse} lies in an $\aleph_1$-saturated elementary extension of $\mathcal{M}_\varphi$.
For the Pauli observable, actual preparations witness the tree property of the second kind already for the response itself (see \cref{sec:pauli-tp2});
for the Rado observable, every array that witnesses it for a quantifier-free formula in a probe variable contains idealized preparations (see \cref{sec:free}).

\section{The Pauli observable: syndrome sectors and the first check to fire}\label{sec:pauli-tp2}

For the Pauli observable, the linear relations among check labels allow an infinite array of actual mixtures to witness the tree property of the second kind for the response predicate itself.
The construction uses the indicator of a syndrome sector, the set of idealized errors with specified commutation values against finitely many checks.
Within each block of linearly independent checks, Alice draws a label uniformly from the coset of a chosen check modulo the span of its predecessors.
The response is $1$ when that check is the first to anticommute with Bob’s error, and $1/2$ when an earlier check already anticommutes with it.
The resulting row conditions are incompatible at a fixed gap even for mixed probes, whereas choices from finitely many rows are realized by a finite-support error when all the checks together are linearly independent.
Thus the array parameters are actual preparations;
an entire infinite path is realized by a pure probe already in the original probe space, whose error label may have infinite support, as specified in \cref{thm:pauli-tp2}(a).

\subsection{Syndrome sectors}
Let $d$ be a positive integer, let $g_1,\dots,g_d$ be linearly independent checks in the space $V$ of labels of Pauli strings of finite support, with span $G$, and let a syndrome $\sigma\in\FF_2^d$ be given.
We regard $\sigma$ as the linear functional on $G$ with $\sigma(g_i)=\sigma_i$, and we call the set $\Sigma_\sigma=\{\lambda\in V^*:\lambda(g_i)=\sigma_i\text{ for }i\le d\}$ of idealized errors with syndrome $\sigma$ its \emph{syndrome sector}.
For $s\in G$ we write $\mathfrak{r}_s$ for the pure response $\mathfrak{r}_{\delta_s}$, the response of the point mass at $s$, and we recall that $\chi_s$ is the character of $V^*$ given by $\chi_s(\lambda)=(-1)^{\lambda(s)}$.

\begin{proposition}\label{prop:sector}
  Let $d$ be a positive integer, let $g_1,\dots,g_d\in V$ be linearly independent checks with span $G$, and let $\sigma\in\FF_2^d$ be a syndrome, regarded as the linear functional on $G$ with $\sigma(g_i)=\sigma_i$ for $i\le d$.
  Then the indicator of the syndrome sector $\Sigma_\sigma$ satisfies, at every pure probe $\lambda\in V^*$,
  \[
    \one_{\Sigma_\sigma}(\lambda)=2^{-d}\sum_{s\in G}(-1)^{\sigma(s)}\chi_s(\lambda)
      =2^{-d}\sum_{s\in G}(-1)^{\sigma(s)}-2^{1-d}\sum_{s\in G}(-1)^{\sigma(s)}\mathfrak{r}_s(\lambda).
  \]
  Consequently, if $\sigma\ne0$, then $\one_{\Sigma_\sigma}=\mathfrak{r}_{\mu^1_\sigma}-\mathfrak{r}_{\mu^0_\sigma}$ on pure probes, where $\mu^1_\sigma$ and $\mu^0_\sigma$ are the uniform probability measures on $\{s\in G:\sigma(s)=1\}$ and on $\{s\in G:\sigma(s)=0\}$;
  and $\one_{\Sigma_0}=1-2\mathfrak{r}_{\mu_G}$, where $\mu_G$ is the uniform probability measure on $G$.
  In particular, for every probe $\nu$, the probability that an error drawn from the Pauli channel $\nu$ has syndrome $\sigma$ is $\mathfrak{r}_{\mu^1_\sigma}(\nu)-\mathfrak{r}_{\mu^0_\sigma}(\nu)$ if $\sigma\ne0$, and $1-2\mathfrak{r}_{\mu_G}(\nu)$ if $\sigma=0$.
\end{proposition}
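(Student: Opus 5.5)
The plan is character orthogonality on the finite group $G\cong\FF_2^d$. Fix a pure probe $\lambda\in V^*$ and consider the restriction $\lambda|_G$, a linear functional on $G$. For $s\in G$ we have $(-1)^{\sigma(s)}\chi_s(\lambda)=(-1)^{(\sigma+\lambda|_G)(s)}$, where the sum $\sigma+\lambda|_G$ is taken in the dual $G^*$. Summing over $s\in G$ gives $2^d$ when $\lambda|_G=\sigma$ and $0$ otherwise. The first case holds because every term is $1$. In the second case the functional $\sigma+\lambda|_G$ is a nonzero homomorphism $G\to\FF_2$, so exactly half of $G$ maps to $1$, and the terms cancel. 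Since $\lambda\in\Sigma_\sigma$ exactly when $\lambda(g_i)=\sigma_i$ for all $i$, which by linearity of $\lambda$ and linear independence of the $g_i$ means $\lambda|_G=\sigma$, the first identity follows. The second identity is then the affine substitution $\chi_s(\lambda)=1-2\lambda(s)=1-2\mathfrak{r}_s(\lambda)$, noted after \cref{prop:channels}.

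For the consequences I will use the same counting. If $\sigma\ne0$, then $\sum_{s\in G}(-1)^{\sigma(s)}=0$, and the sets $\{\sigma=1\}$ and $\{\sigma=0\}$ each have $2^{d-1}$ elements. So the right-hand side becomes
\[
-2^{1-d}\Bigl(\sum_{\sigma(s)=0}\mathfrak{r}_s-\sum_{\sigma(s)=1}\mathfrak{r}_s\Bigr).
\]
Because $\mathfrak{r}_\mu(\lambda)=\sum_a\mu(a)\lambda(a)$ is linear in $\mu$ by \cref{lem:probe-space}, each average $2^{1-d}\sum$ over a set of size $2^{d-1}$ is the response of the corresponding uniform mixture. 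This gives $\mathfrak{r}_{\mu^1_\sigma}-\mathfrak{r}_{\mu^0_\sigma}$. For $\sigma=0$ the constant term is $2^{-d}\cdot2^d=1$, and the remaining sum is $2\mathfrak{r}_{\mu_G}$, which gives $1-2\mathfrak{r}_{\mu_G}$.

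For the final assertion I will integrate against a probe $\nu\in\cP(K_\omega)$, which is identified with a Pauli channel by \cref{prop:channels}. Both sides are continuous functions on $V^*$. The set $\Sigma_\sigma$ is clopen, since it is determined by finitely many coordinates, so $\nu(\Sigma_\sigma)=\int\one_{\Sigma_\sigma}\,d\nu$. By \cref{lem:probe-space} the completed response $\mathfrak{r}_\mu(\nu)$ is exactly the integral of the function $\lambda\mapsto\mathfrak{r}_\mu(\lambda)$ against $\nu$, so the identity of continuous functions on pure probes integrates to the stated identity on all probes.

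There is no real obstacle here. The only care needed is in the second case of the orthogonality count, where the sum vanishes because $\sigma+\lambda|_G$ is a nonzero functional and so takes the values $0$ and $1$ equally often on $G$. Linear independence of the $g_i$ is used so that $\sigma$ is a well-defined functional on $G$ and $|G|=2^d$. Commutation of the checks is never used.
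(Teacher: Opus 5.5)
Your proposal is correct and follows essentially the same route as the paper's proof. The paper likewise uses character orthogonality on $G$, the substitution $\chi_s=1-2\mathfrak{r}_s$, grouping by the level sets of $\sigma$ (each of size $2^{d-1}$ when $\sigma\ne0$), and integration of the pure-probe identity against $\nu$ via the identification of probes with Pauli channels.
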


\begin{proof}
  The map $s\mapsto(-1)^{\sigma(s)}\chi_s(\lambda)=(-1)^{(\sigma+\lambda)(s)}$ is a character of the finite group $G$;
  its sum over $G$ is $2^d$ if the linear functional $\sigma+\lambda$ vanishes on $G$, that is, if $\lambda\in\Sigma_\sigma$, and $0$ otherwise, since a nonzero functional on $G$ takes the value $1$ on exactly half of $G$.
  This is the first equality, and the second follows from $\chi_s=1-2\mathfrak{r}_s$.
  If $\sigma\ne0$, the first sum on the right vanishes, and the two level sets of $\sigma$ have $2^{d-1}$ elements each;
  grouping the second sum by the value of $\sigma$ gives $\mathfrak{r}_{\mu^1_\sigma}-\mathfrak{r}_{\mu^0_\sigma}$.
  If $\sigma=0$, the first sum is $2^d$, and the second is $2^d\mathfrak{r}_{\mu_G}$.
  Both sides of each identity are affine in the probe, and integration against $\nu$, together with \cref{prop:channels}(b), yields the last statement.
\end{proof}

When the checks commute pairwise and carry signs, the operator
$2^{-d}\sum_{s\in G}(-1)^{\sigma(s)}g_s$, where $g_s$ is the element with label $s$ of the group generated by the signed checks, is the projector onto the states of syndrome
$\sigma$ \cite{Gottesman:1997}, and the probability of the outcome $\sigma$ when the checks are measured on a code state corrupted by a Pauli channel $\nu$ is given by \cref{prop:sector}.
The proposition requires no commutation:
the checks need only be linearly independent.
The coefficients of the indicator, as an affine combination of Alice’s responses, have total absolute value at most $1+2=3$, whatever the number $d$ of checks.

\subsection{The first check to fire}
The next theorem arranges the syndrome sectors of the initial segments of blocks of checks in an array that witnesses the tree property of the second kind.

\begin{keytheorem}[The first check to fire]\label{thm:pauli-tp2}
  Let $(g_{im})_{i,m\in\NN}$ be linearly independent vectors of $V$, let $W_{ij}=\spn(g_{im}:m<j)$ for $i,j\in\NN$, and let $\mu_{ij}$ be the uniform probability measure on the coset $g_{ij}+W_{ij}$.
  For the Pauli observable, the following hold:
  \begin{enumerate}[label=\textup{(\alph*)}]
  \item
    for every $f\colon\NN\to\NN$ and every $n\in\NN$, there is an error $e\in V$ with $r_{\mu_{if(i)}}(e)=1$ for all $i<n$, and there is an idealized error $\lambda\in V^*$ with $\mathfrak{r}_{\mu_{if(i)}}(\lambda)=1$ for all $i$;
  \item
    for every $i$, every $j\ne j'$ and every probe $\nu\in\cP(V^*)$, we have $\min\bigl(\mathfrak{r}_{\mu_{ij}}(\nu),\mathfrak{r}_{\mu_{ij'}}(\nu)\bigr)\le 2/3$, and the bound $2/3$ is attained.
  \end{enumerate}
  Consequently, the family of responses of Alice has the tree property of the second kind in the sense of \cref{def:family-tp}, with $k=2$, $\alpha=2/3$ and $\beta=1$.
\end{keytheorem}

\begin{proof}
  We first compute the response of $\mu_{ij}$ at a pure probe $\lambda\in V^*$.
  If $\lambda$ does not vanish on $W_{ij}$, then the functional $\lambda$
  takes the value $1$ on exactly half of the coset $g_{ij}+W_{ij}$, since a nonzero functional on a finite $\FF_2$-space is balanced, and translation by $g_{ij}$ either preserves or exchanges its level sets.
  If $\lambda$ vanishes on $W_{ij}$, then the functional $\lambda$ is constant on the coset, with value $\lambda(g_{ij})$.
  Hence
  \begin{equation}\label{eq:first-check}
  \mathfrak{r}_{\mu_{ij}}(\lambda)=
  \begin{cases}
  1   & \text{if }\lambda(g_{im})=0\text{ for }m<j\text{ and }\lambda(g_{ij})=1, \\
  1/2 & \text{if }\lambda(g_{im})=1\text{ for some }m<j,                         \\
  0   & \text{otherwise.}
  \end{cases}
  \end{equation}
  In words, the response is $1$ exactly when $g_{ij}$ is the first check of the block $(g_{im})_{m\in\NN}$, which we call block $i$, to fire, that is, to take the value $1$ under $\lambda$.

  (a) Let $f$ and $n$ be given.
  The conditions $\lambda(g_{im})=0$ for $m<f(i)$ and
  $\lambda(g_{if(i)})=1$, for $i<n$, concern finitely many linearly independent vectors, so \cref{lem:generic} gives an error
  $e$ with $\omega(\cdot,e)$ satisfying them, and by~\eqref{eq:first-check} the response $r_{\mu_{if(i)}}(e)$ equals $1$ for all $i<n$.
  The conditions for all $i$ together define a nonempty closed subset of $V^*$, since the vectors $g_{im}$ are linearly independent;
  any $\lambda$ in it works.

  (b) By symmetry, we may assume that $j<j'$.
  By~\eqref{eq:first-check}, the pair $\bigl(\mathfrak{r}_{\mu_{ij}}(\lambda),\mathfrak{r}_{\mu_{ij'}}(\lambda)\bigr)$ at a pure probe takes one of the values
  \[
    (1,\tfrac12),\quad(\tfrac12,\tfrac12),\quad(0,\tfrac12),\quad(0,1),\quad(0,0).
  \]
  Indeed, if the first check of block $i$ to fire among $g_{i0},\dots,g_{ij'}$ has index $m$, the pair is $(\tfrac12,\tfrac12)$ for $m<j$, $(1,\tfrac12)$ for $m=j$, $(0,\tfrac12)$ for $j<m<j'$ and $(0,1)$ for $m=j'$;
  if none fires, the pair is $(0,0)$.
  Responses are affine in the probe, so at a mixed probe $\nu$ the pair lies in the convex hull of these five points.
  Every listed pair $(s,t)$ satisfies $s+2t\le2$, and the same inequality holds throughout their convex hull.
  Hence $3\min(s,t)\le s+2t\le2$, so the minimum of the two coordinates is at most $2/3$.
  Equality requires $s=t=2/3$, and this point is $\tfrac23(1,\tfrac12)+\tfrac13(0,1)=(\tfrac23,\tfrac23)$.
  This point is the pair of responses at the probe $\tfrac23\delta_{\lambda_1}+\tfrac13\delta_{\lambda_2}$, where $\lambda_1$ and $\lambda_2$ are idealized errors under which $g_{ij}$ and $g_{ij'}$, respectively, are the first checks of block $i$ to fire;
  such idealized errors exist by~(a).

  Statements (a) and (b) are the two clauses of \cref{def:family-tp} for the array $(\mathfrak{r}_{\mu_{ij}})$, with $k=2$, $\alpha=2/3$ and $\beta=1$.
\end{proof}

\begin{remark}\label{rem:qudits}
  For qudits of prime dimension $p$, the quantum systems with Hilbert space
  $\CC^p$, the labels form a symplectic space over the field
  $\FF_p$ with $p$ elements, and the response of a label at an error is $1$ when their commutation phase, the $p$th root of unity by which the products of the two strings in the two orders differ, is nontrivial and $0$ otherwise.
  The proof of \cref{thm:pauli-tp2} adapts as follows.
  A nonzero functional on a finite $\FF_p$-space takes each value equally often, so in~\eqref{eq:first-check} the conditions
  $\lambda(g_{ij})=1$ and $\lambda(g_{im})=1$ become $\lambda(g_{ij})\ne0$ and $\lambda(g_{im})\ne0$, and the response $\frac12$ becomes $\vartheta=\frac{p-1}p$.
  For $j<j'$, the pair of responses at a pure probe takes one of the values $(1,\vartheta)$, $(\vartheta,\vartheta)$, $(0,\vartheta)$, $(0,1)$ and $(0,0)$.
  The point $(\vartheta,\vartheta)$ lies below the segment from $(1,\vartheta)$ to $(0,1)$, since $\vartheta<1-\vartheta+\vartheta^2$, and on that segment the two coordinates are equal at the point with first coordinate $\frac p{p+1}$.
  Hence the bound $\frac23$ of \cref{thm:pauli-tp2}(b) becomes $\frac p{p+1}$, and it is attained.
\end{remark}

In the notation of \cref{thm:pauli-tp2} and the language of \cref{prop:sector}, the event “the check
$g_{ij}$ fires first in block $i$” is the syndrome sector of the checks $g_{i0},\dots,g_{ij}$ with syndrome $(0,\dots,0,1)$, whose two level sets are the coset $g_{ij}+W_{ij}$ and the group $W_{ij}$.
Hence its indicator is $\mathfrak{r}_{\mu_{ij}}-\mathfrak{r}_{\mu^0_{ij}}$ on pure probes, where $\mu^0_{ij}$ is the uniform probability measure on $W_{ij}$.
Only the linear independence of the checks enters \cref{thm:pauli-tp2};
when the checks commute pairwise, each $W_{ij}$ is the set of labels of a stabilizer group, and $\mu_{ij}$ draws uniformly from a coset of it.
The simplest instance is the repetition code.

\begin{example}\label{exa:repetition}
  Index the qubits by pairs $(i,m)\in\NN^2$ and call the qubits $(i,m)$ for $m\in\NN$ the chain $i$.
  Let $X_{i,m}$ and $Z_{i,m}$ denote the bit flip and phase flip on the qubit $(i,m)$.
  The check $g_{im}=Z_{i,m}Z_{i,m+1}$ compares the neighboring qubits $m$ and $m+1$ in the bit-flip repetition code on chain $i$.
  These checks commute pairwise and are linearly independent.
  Let $e\in\hat V$ be a bit-flip label with exponents $x_{i,m}\in\FF_2$ for $(i,m)\in\NN^2$.
  We write $e=\prod_{i,m}X_{i,m}^{x_{i,m}}$ as notation for this label, allowing infinitely many nonzero exponents;
  no infinite product of operators on a fixed Hilbert space is asserted.
  For the finite-support check $g_{im}$ we have $\omega(g_{im},e)=x_{i,m}+x_{i,m+1}$.
  This equals $1$ exactly when one of the neighboring qubits is flipped and the other is not, giving a domain wall between them.
  By~\eqref{eq:first-check}, the response $\mathfrak{r}_{\mu_{ij}}(e)$ of the uniform probability measure
  $\mu_{ij}$ on the coset of $g_{ij}$ modulo the span of $g_{i0},\dots,g_{i,j-1}$, as in \cref{thm:pauli-tp2}, is $1$ exactly when the first domain wall of $e$ on the chain $i$ lies between the qubits $j$ and $j+1$.
  For this code, \cref{thm:pauli-tp2} reads as follows:
  the chains are independent code blocks of a memory, an error cannot have two first domain walls on the same chain, and the positions of the first domain walls on finitely many chains can be prescribed independently by a single error.
  The bit flips on the chain $i$ with zero syndrome are the identity and the flip of every qubit of the chain, which acts on infinitely many qubits:
  the logical bit flip of an infinite chain is an idealized error, an element of $\hat{V}\setminus V$.
\end{example}

\begin{remark}\label{rem:fourier}
  The mechanism behind \cref{thm:pauli-tp2} is Fourier analysis on the compact group $V^*$:
  since the characters of $V^*$ separate points and are closed under multiplication, the linear span of Alice’s responses and the constants is dense in $C(V^*)$ by the Stone--Weierstrass theorem;
  it is an algebra, because $2\mathfrak{r}_a\mathfrak{r}_b=\mathfrak{r}_a+\mathfrak{r}_b-\mathfrak{r}_{a+b}$ for the pure responses $\mathfrak{r}_a=\mathfrak{r}_{\delta_a}$, $a\in V$, restricted to pure probes.
  By \cref{prop:sector}, the indicators of syndrome sectors, which are functions of supremum norm one, lie in this span with coefficients of total absolute value at most $3$.
  In the first-order structure $(V,\omega)$, by contrast, a formula speaks about a bounded number of labels, and, by \cref{thm:symp-simple}, no formula has the tree property.
\end{remark}

The coset mixtures of \cref{thm:pauli-tp2} thus strengthen the construction of \cref{thm:collapse}:
the response predicate itself witnesses the tree property of the second kind, with actual preparations as parameters, because a preparation of $\mathcal{M}_\omega$ can average over a finite set of labels whose size grows along the array, whereas a formula of the label structure $(V,\omega)$ speaks about a bounded number of labels.

\Cref{tab:response-dictionary} collects, for the Pauli observable, the translations of \cref{sec:observables,sec:collapse,sec:pauli-tp2}.
It concerns the real-valued structure $\mathcal{M}_\omega$ of preparations and probes of \cref{sec:observables}, for the predicate $\omega$, which has probability distributions on $V$ as preparations, and Radon probability measures on $\hat V=\prod_{k\in\NN}\FF_2^2$ as probes (see \cref{prop:channels}).
The compact space $\hat V$ consists of arbitrary-support error labels and contains $V$;
its point masses are the pure probes, with labels in $V$ giving actual finite-support errors.
The symplectic pairing of a finite-support check with any label in $\hat V$ is a finite sum, so every response is defined (see \cref{prop:channels}).
A specified nonzero syndrome is a linear functional on the finite check span;
its two level sets, the kernel and the complementary coset, are labeled by the values $0$ and $1$.

\begin{table}[ht]
  \centering
  \small
  \renewcommand{\arraystretch}{1.22}
  \setlength{\tabcolsep}{3pt}
  \arrayrulecolor{urlpurple!55!white}
  \begin{tabular}{@{}>{\raggedright\arraybackslash}p{0.27\textwidth}>{\raggedright\arraybackslash}p{0.51\textwidth}>{\raggedright\arraybackslash}p{0.18\textwidth}@{}}
    \toprule
    \rowcolor{palepurple}
    \textbf{Response structure} & \textbf{Pauli interpretation} & \textbf{Result} \\
    \midrule
    actual error labels $e\in V$ & point masses at finite-support errors & \cref{prop:channels} \\
    all pure probes & point masses at arbitrary-support labels in $\hat V$ & \cref{prop:channels} \\
    all probes & Pauli channels: probability laws on $\hat V$ & \cref{prop:channels} \\
    response of a mixture & anticommutation probability for independent draws of a check and an error & \cref{prop:channels} \\
    uniform response on the $1$-coset minus that on the $0$-coset & probability of the specified nonzero syndrome & \cref{prop:sector} \\
    commuting signed checks & operator analogue: projector onto a syndrome eigenspace & \cref{prop:sector} \\
    TP$_2$ parameters & actual finite mixtures of checks & \cref{thm:pauli-tp2} \\
    path realizations & finite path: an error in $V$; whole path: a pure probe in $\hat V$ & \cref{thm:pauli-tp2} \\
    \bottomrule
  \end{tabular}
  \vspace{6pt}
  \caption{From response functions to syndrome probabilities: the response structure and its Pauli interpretation.}
  \label{tab:response-dictionary}
\end{table}

\section{The Rado observable: marginals and idealized preparations}\label{sec:free}

For the Rado observable, the mixed theory has the same tree property as for Pauli, but the preparation parameters required to witness it are different.
The pure probe space is the full cube of adjacency patterns, and the response of an actual mixture depends only on the probabilities with which a probe declares individual vertices to be neighbors.
The freedom to prescribe these marginals separately on disjoint blocks prevents an infinite uniformly inconsistent row with actual mixture parameters,
first for bounded affine combinations and then for each fixed continuous connective applied to finitely many responses (see \cref{thm:free,thm:free-qf}).
Consequently, a witnessing array for a quantifier-free formula of this form, in a probe variable with preparation parameters, must contain preparation parameters from an elementary extension outside the original structure.
The restriction concerns the parameters of the array;
the probes in the compatibility results range over the whole probe space, including mixed probes and pure probes that are not actual labels.
The same argument applies to the random bipartite graph, whose two vertex classes serve as Alice’s and Bob’s labels.

\subsection{Affine combinations of responses}
Consider a diagonal observable with freely independent pure responses, that is, one for which every pattern $\lambda\in\{0,1\}^A$ of answers to Alice’s labels, the elements of $A$, is an idealized label of Bob.
The response of a mixture depends on a probe $\nu$ only through its marginals
$\pi_a(\nu)=\nu\{\lambda:\lambda(a)=1\}$, for $a\in A$, as for the Rado observable in \cref{sec:rado-obs}, and every vector of marginals in $[0,1]^A$ is attained by a product measure.
A bounded affine combination of responses is therefore a function
\[
  h_{c,\gamma}(\nu)=\gamma+\sum_{a\in A}c(a)\,\pi_a(\nu),\qquad \gamma\in\RR,\ c\in\ell^1(A),
\]
and its supremum over the probe space is $\gamma+\sum_a c(a)^+$, attained at the pure probe $\lambda=\one_{\{c>0\}}$.
Two such functions can conflict only through the labels on which both place weight.
A gliding hump, in the proof of \cref{thm:free} below, shows that a sequence of such functions along which
$|\gamma|+\|c\|_1$ stays bounded has a subsequence whose members place their weight, up to small errors, on a common finite set of labels and on pairwise disjoint finite blocks;
for every positive integer $k$, a single pure probe then nearly maximizes $k$ of them.

\begin{keytheorem}[Compatibility of affine conditions]\label{thm:free}
  Let $C_\varphi$ be the diagonal observable of a function $\varphi\colon A\times B\to\{0,1\}$ on countable sets $A$ and $B$, with freely independent pure responses, and let $\kappa>0$ be given.
  Let $\cR_\kappa$ be the family of functions $\gamma+\sum_{i<n}c_i\mathfrak{r}_{\mu_i}$
  on the probe space $\cP(\{0,1\}^A)$ of Radon probability measures on $\{0,1\}^A$, where $n\in\NN$, the $\mu_i$ are mixtures of Alice, and $\gamma$ and the $c_i$ are reals with $|\gamma|+\sum_i|c_i|\le\kappa$.
  Then the family $\cR_\kappa$ contains no uniformly $k$-inconsistent sequence, for any integer $k\ge2$.
  In particular, no tree property is witnessed in $\cR_\kappa$.
\end{keytheorem}

\begin{proof}
  Let $(h_j)_{j\in\NN}$ be a sequence in $\cR_\kappa$ and let $\alpha<\beta$ be such that each $h_j$ takes a value $\ge\beta$.
  We show that for every integer $k\ge2$ there are $j_1<\dots<j_k$ and a pure probe $\lambda$ with $h_{j_\ell}(\delta_\lambda)>\alpha$ for $\ell=1,\dots,k$.

  Write $h_j=\gamma_j+\sum_ic_{j,i}\mathfrak{r}_{\mu_{j,i}}$, with real coefficients $c_{j,i}$ and mixtures $\mu_{j,i}$ of Alice;
  then $h_j=h_{c_j,\gamma_j}$ with $c_j=\sum_ic_{j,i}\mu_{j,i}$, whence $\|c_j\|_1\le\kappa$.
  Since $A$ is countable, a diagonal argument gives a subsequence, still denoted $(c_j)$, that converges pointwise on $A$ to some $c$;
  by Fatou’s lemma, the limit satisfies $\|c\|_1\le\kappa$.
  Let $\eta=(\beta-\alpha)/5$, and choose a finite set $F\subseteq A$ with $\|c\restriction(A\setminus F)\|_1<\eta$.
  Let $F_0=F$, and choose inductively indices $j_1<\dots<j_k$ and finite sets $F_0\subseteq F_1\subseteq\dots\subseteq F_k$ such that
  \[
    \|(c_{j_\ell}-c)\restriction F_{\ell-1}\|_1<\eta
    \quad\text{and}\quad
    \|c_{j_\ell}\restriction(A\setminus F_\ell)\|_1<\eta;
  \]
  the first condition holds for all large indices by pointwise convergence, and the second for a large enough finite set.
  Define $\lambda\in\{0,1\}^A$ by $\lambda(a)=1$ exactly when $a\in F$ and $c(a)>0$, or $a\in F_\ell\setminus F_{\ell-1}$ and $c_{j_\ell}(a)>0$ for some $\ell\le k$.

  Let $\ell\le k$ be given.
  The deficit of $\lambda$ for $h_{j_\ell}$ is
  \[
    \sup h_{j_\ell}-h_{j_\ell}(\delta_\lambda)=
    \sum_{\lambda(a)=0}c_{j_\ell}(a)^++\sum_{\lambda(a)=1}c_{j_\ell}(a)^-,
  \]
  and we bound its contribution on four disjoint pieces of $A$.
  On $F_\ell\setminus F_{\ell-1}$ the sign pattern of $\lambda$ is that of $c_{j_\ell}$, and the contribution is $0$.
  On $F$ it is that of $c$, and a label where the signs of $c_{j_\ell}$ and $c$ disagree contributes at most $|c_{j_\ell}(a)-c(a)|$;
  the total is less than $\eta$.
  On $F_{\ell-1}\setminus F$ the contribution is at most $\|c_{j_\ell}\restriction(F_{\ell-1}\setminus F)\|_1 \le\|(c_{j_\ell}-c)\restriction F_{\ell-1}\|_1+\|c\restriction(A\setminus F)\|_1<2\eta$.
  On $A\setminus F_\ell$ it is less than $\eta$.
  Hence $h_{j_\ell}(\delta_\lambda)>\sup h_{j_\ell}-4\eta\ge\beta-4\eta>\alpha$.
\end{proof}

\begin{corollary}\label{cor:rado}
  Let $n\in\NN$ be given, and let $\gamma$ and $c_i$, for $i<n$, be reals.
  For the Rado observable, no tree property of the affine formula
  $\gamma+\sum_{i<n}c_iR(y_i,x)$, in which $R$ is the response predicate, the variables
  $y_i$ are of the sort of preparations and the variable
  $x$ is of the sort of probes, is witnessed by actual preparations, although the real-valued theory of the structure $\mathcal{M}_E$ of its mixed preparations and probes has the tree property of the second kind.
\end{corollary}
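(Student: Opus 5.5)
The corollary follows by combining \cref{thm:free} with \cref{thm:collapse}. Both are applied to the Rado observable, which has freely independent pure responses by \cref{lem:rado-probes}.

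\textbf{First clause.} Fix $n$, $\gamma$ and the $c_i$, and set
\[
  \kappa=\max\Bigl(1,\ |\gamma|+\sum_{i<n}|c_i|\Bigr).
\]
Let the preparation variables $y_0,\dots,y_{n-1}$ be replaced by actual mixtures $\mu_0,\dots,\mu_{n-1}$ of $\mathcal{M}_E$. As a function of the probe $x$, the resulting instance is $\gamma+\sum_{i<n}c_i\mathfrak{r}_{\mu_i}$. This function lies in the family $\cR_\kappa$ of \cref{thm:free}, on the probe space $\cP(\{0,1\}^R)=\cP(K_E)$.

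Suppose some tree property of the formula were witnessed by actual preparations, in the sense of \cref{def:family-tp} and of the paragraph following it. That paragraph says how the tree property and the tree property of the first kind are formulated for families of functions. In each of the three properties, the children of any node, or the members of any row, form a uniformly $k$-inconsistent sequence for some $k\ge2$, with the thresholds $\alpha<\beta$ of the witness. These children are instances with actual parameters, so they form a uniformly $k$-inconsistent sequence in $\cR_\kappa$.

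\Cref{thm:free} excludes such a sequence. It is also harmless that the witness may state path consistency only at actual probes, or only approximately. The row condition of uniform $k$-inconsistency, together with each member reaching $\ge\beta$ somewhere, already yields the contradiction. This holds whether the $\ge\beta$ value is reached on the original probe sort or on all of $\cP(K_E)$, since the former is contained in the latter.

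\textbf{Second clause.} The Rado observable satisfies the hypothesis of \cref{thm:collapse}. Take distinct vertices $a_0,a_1,\dots$ of $R$. For finite disjoint sets $N_1,N_0$, the extension property gives a vertex $b$ adjacent exactly to the $a_n$ with $n\in N_1$ among those indexed by $N_1\cup N_0$. \Cref{thm:collapse} then gives the tree property of the second kind for the formula $R(y_1,x)-R(y_2,x)$ in the theory of $\mathcal{M}_E$. This formula is itself of the affine shape in the corollary, with $n=2$, $\gamma=0$, $c_0=1$ and $c_1=-1$, so the contrast is sharp: that very formula has TP$_2$, but not with actual parameters.

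\textbf{Main obstacle.} The only delicate point is the bookkeeping in the first clause: checking that the notion of ``witnessed by actual preparations'' for all three tree properties reduces to producing a uniformly $k$-inconsistent sequence of functions in $\cR_\kappa$ on the probe space. I would handle this by quoting the paragraph after \cref{def:family-tp}, which the paper set up for exactly this reduction.
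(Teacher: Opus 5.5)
Your proposal is correct and follows the paper's proof. The paper likewise uses \cref{lem:rado-probes} to place every instance with actual parameters in the family $\cR_\kappa$ of \cref{thm:free}, for any $\kappa>0$ with $\kappa\ge|\gamma|+\sum_{i<n}|c_i|$, and cites \cref{thm:collapse} for the second clause; your explicit check of that theorem's hypothesis via the extension property, and your remark that the difference formula has the affine shape, only spell out what the paper leaves implicit.
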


\begin{proof}
  By \cref{lem:rado-probes}, the Rado observable has freely independent pure responses.
  With actual preparations as parameters, the instances of the formula lie in the family $\cR_\kappa$ of \cref{thm:free} for every $\kappa>0$ with $\kappa\ge|\gamma|+\sum_{i<n}|c_i|$, and the theorem yields the first statement.
  The second statement is \cref{thm:collapse}.
\end{proof}

The distinction between finite and infinite arrays is essential here.
For the fixed difference-of-responses formula of \cref{thm:collapse}, actual mixtures provide arrays of every finite size, but \cref{thm:free} excludes an infinite uniformly inconsistent row whose preparation parameters are all actual mixtures.
Hence each infinite row supplied by compactness must contain an idealized preparation parameter;
the arbitrarily large finite arrays do not assemble into such a row inside the original structure.
This conclusion also holds for the random bipartite graph, since its finite extension property makes every adjacency pattern a pure probe.

\subsection{Quantifier-free formulas}
In \cref{thm:free} the conditions on a probe are given by affine combinations of responses.
In the real-valued logic of Part~II, a quantifier-free formula in a variable
$x$ of the sort of probes, built from the response with parameters of the sort of preparations, is a continuous function of finitely many responses, since the responses take values in $[0,1]$ and the connectives are continuous functions.
The next theorem is the analogue of \cref{thm:free} for these formulas.
Its proof uses the same gliding hump, with one change:
a continuous function of responses is no longer maximized by reading off signs (we show this after the proof), so instead of building the probe from signs we glue maximizers of the individual instances on disjoint blocks of labels.

\begin{keytheorem}[Compatibility of quantifier-free conditions]\label{thm:free-qf}
  Let $C_\varphi$ be the diagonal observable of a function $\varphi\colon A\times B\to\{0,1\}$ on countable sets $A$ and $B$, with freely independent pure responses, let $n\ge1$ be an integer, and let $u\colon[0,1]^n\to\RR$ be a continuous function.
  Let $\cR_u$ be the family of functions
  \[
    h(\nu)=u\bigl(\mathfrak{r}_{\mu_1}(\nu),\dots,\mathfrak{r}_{\mu_n}(\nu)\bigr)
    \qquad(\nu\in\cP(\{0,1\}^A)),
  \]
  where $\mu_1,\dots,\mu_n$ range over the mixtures of Alice.
  Then the family $\cR_u$ contains no uniformly $k$-inconsistent sequence, for any integer $k\ge2$.
  In particular, no tree property of the quantifier-free formula
  $u(R(y_1,x),\dots,R(y_n,x))$, where $R$ is the response predicate, the variable $x$ is of the sort of probes and the variables $y_1,\dots,y_n$ are of the sort of preparations, is witnessed by actual preparations.
\end{keytheorem}

\begin{proof}
  For $\nu\in\cP(\{0,1\}^A)$ write $\pi(\nu)\in[0,1]^A$ for its vector of marginals $(\pi_a(\nu))_{a\in A}$, defined before \cref{thm:free}.
  Every $h\in\cR_u$ factors as $h(\nu)=H(\pi(\nu))$ with
  \[
    H(q)=u\bigl(\langle\mu_1,q\rangle,\dots,\langle\mu_n,q\rangle\bigr),
    \qquad \langle\mu,q\rangle=\sum_{a\in A}\mu(a)q_a ,
  \]
  and every $q\in[0,1]^A$ is the vector of marginals of a product measure.
  It therefore suffices to work on the cube $[0,1]^A$.

  Let $(h_j)_{j\in\NN}$ be a sequence in $\cR_u$, with parameters $\mu_{j,1},\dots,\mu_{j,n}$ and associated functions $H_j$ on the cube, and let $\alpha<\beta$ be such that each $h_j$ takes a value $\ge\beta$.
  We show that for every integer $k\ge2$ there are $j_1<\dots<j_k$ and $q\in[0,1]^A$ with $H_{j_\ell}(q)>\alpha$ for $\ell=1,\dots,k$.
  Since $u$ is uniformly continuous, there is $\theta>0$ such that $|u(s)-u(t)|<\beta-\alpha$ whenever $\max_i|s_i-t_i|<\theta$.
  For each $j$ choose $q^{(j)}\in[0,1]^A$ with $H_j(q^{(j)})\ge\beta$.
  The set $A$ is countable and the cube is compact and metrizable, so a diagonal argument provides a subsequence, still indexed by $j$, along which $\mu_{j,i}\to\mu_i$ pointwise on $A$ for each $i\le n$ and $q^{(j)}\to q^\infty$ in $[0,1]^A$.
  By Fatou’s lemma, each limit $\mu_i$ has total mass at most $1$.
  Let $\eta=\theta/5$, and choose a finite set $F\subseteq A$ with $\mu_i(A\setminus F)<\eta$ for all $i\le n$.

  Let $F_0=F$, and choose inductively indices $j_1<\dots<j_k$ and finite sets $F_0\subseteq F_1\subseteq\dots\subseteq F_k$ such that, for all $i\le n$,
  \[
    \sum_{a\in F_{\ell-1}}|\mu_{j_\ell,i}(a)-\mu_i(a)|<\eta,\qquad
    \sum_{a\in F}|q^{(j_\ell)}_a-q^\infty_a|<\eta,\qquad
    \mu_{j_\ell,i}(A\setminus F_\ell)<\eta .
  \]
  The first two conditions hold for all large indices, by convergence on the finite sets $F_{\ell-1}$ and $F$, and the third for a large enough finite set $F_\ell$.
  Define $q\in[0,1]^A$ by $q_a=q^\infty_a$ for $a\in F$, $q_a=q^{(j_\ell)}_a$ for $a\in F_\ell\setminus F_{\ell-1}$ and $\ell\le k$, and $q_a=0$ for $a\notin F_k$.

  Let $\ell\le k$ and $i\le n$ be given.
  We compare $\langle\mu_{j_\ell,i},q\rangle$ with $\langle\mu_{j_\ell,i},q^{(j_\ell)}\rangle$, on the same four pieces of $A$ as in the proof of \cref{thm:free};
  on each piece the two vectors differ by at most $1$ in each coordinate.
  On $F_\ell\setminus F_{\ell-1}$ they agree.
  On $F$ the difference is at most $\sum_{a\in F}|q^\infty_a-q^{(j_\ell)}_a|<\eta$, since $\mu_{j_\ell,i}(a)\le1$.
  On $F_{\ell-1}\setminus F$ it is at most $\mu_{j_\ell,i}(F_{\ell-1}\setminus F)\le\sum_{a\in F_{\ell-1}}|\mu_{j_\ell,i}(a) -\mu_i(a)|+\mu_i(A\setminus F)<2\eta$.
  On $A\setminus F_\ell$ it is at most $\mu_{j_\ell,i}(A\setminus F_\ell)<\eta$.
  Hence $|\langle\mu_{j_\ell,i},q\rangle-\langle\mu_{j_\ell,i},q^{(j_\ell)}\rangle|<4\eta<\theta$ for every $i\le n$, and the choice of $\theta$ gives $H_{j_\ell}(q)>H_{j_\ell}(q^{(j_\ell)})-(\beta-\alpha)\ge\alpha$.
\end{proof}

For affine combinations, the proof of \cref{thm:free} yields slightly more:
there the common witness can be taken pure.
For an arbitrary connective $u$ the witness is the product probe whose vector of marginals is the point $q$ constructed in the proof of \cref{thm:free-qf}, and it cannot always be taken pure:
for $n=1$, $\mu=\delta_a$ with $a\in A$, and $u(t)=1-|2t-1|$, the instance vanishes at every pure probe and equals $1$ at the probes $\nu$ with $\pi_a(\nu)=1/2$.

Thus, for the Rado observable, the freedom to prescribe marginals on disjoint blocks of labels lets the gliding hump assemble compatible conditions from every sequence with actual parameters.
For the Pauli observable, by contrast, the linear relations among label coordinates place the indicators of syndrome sectors in the affine span of the responses, with a coefficient bound independent of the number of checks (see \cref{prop:sector}), and these identities yield the actual-parameter array of \cref{thm:pauli-tp2}.

\section{Affine logic and correlated preparations}\label{sec:affine}

Finally, we place the results in affine logic,
in the form developed by Ben Yaacov, Ibarlucía and Tsankov \cite{BenYaacov-Ibarlucia-Tsankov:2024}, whose connectives are the affine maps and in which a mixture of structures is a direct integral.
Formula values in a direct integral are integrals of formula values, which is the rule by which a response is computed on a mixed preparation.
In this logic the type spaces are compact convex sets, and a structure is \emph{extremal} when it realizes only extreme types, the extreme points of these sets.
Ben Yaacov, Ibarlucía and Tsankov \cite{BenYaacov-Ibarlucia-Tsankov:2024} showed that every model of an affine theory is an affine elementary substructure of a direct integral of extremal models.
An affine theory is simplicial when its type spaces are Choquet simplices, that is, sets in which every point is the barycenter of exactly one probability measure that is maximal in the Choquet order;
when the set is metrizable, the maximal measures are those concentrated on its extreme points.
The same authors also showed that every model of a simplicial theory is such a direct integral, uniquely.

Ben Yaacov and Ibarlucía \cite{BenYaacov-Ibarlucia:2026} proved that an affine formula stable in almost every member of a measurable field of structures is stable in its direct integral,
and that an affine theory is stable exactly when its extremal models are stable.
Their definition of stability of a formula in a structure is Grothendieck’s double limit exchange condition, which is also the definition used in Part~I~\cite{Duenez-Iovino:2026a}:
for every two sequences of elements, one in each variable of the formula, the two iterated limits of its values agree whenever both exist.
Mixing in the sense of this paper behaves differently with respect to simplicity:
the first-order theories of the Pauli group and of the Rado graph are simple, whereas, by \cref{thm:collapse}, the real-valued theories of their observables have the tree property of the second kind.

The distinction between the two forms of averaging is already visible in the expectation kernel.
In a direct integral, the elements in both variables may depend on the same random parameter, and the formula value is averaged over that common parameter;
for a label relation, this permits correlated choices of Alice’s and Bob’s labels.
Keisler’s randomizations \cite{Keisler:1999,BenYaacov:2013} likewise use random elements defined on a common probability space.
In the observable structure considered here, Alice’s label and Bob’s pure probe are drawn independently from their respective laws, and the response is their product average (see \cref{lem:probe-space}).
The tree property established in \cref{thm:collapse} therefore already occurs with independent product preparations.

\section{Questions}\label{sec:questions}

\begin{question}\label{ques:criterion}
  Which diagonal observables with the independence property have the tree property of the second kind witnessed by actual preparations?
  For the diagonal observable $C_\varphi$ of a function
  $\varphi\colon A\times B\to\{0,1\}$ on countable sets
  $A$ and $B$, \cref{prop:sector}, \cref{thm:pauli-tp2} and \cref{thm:free} suggest a criterion in terms of the linear span of its responses in the space $C(K_\varphi)$ of continuous functions on the space $K_\varphi$ of idealized labels of Bob:
  whether it contains, with coefficients of bounded total absolute value,
  the indicators of the sets of an infinite array of clopen sets whose rows consist of pairwise disjoint sets and in which every choice of one set from each of finitely many rows has nonempty intersection.
\end{question}

\begin{question}\label{ques:qudits}
  By \cref{rem:qudits}, the proof of \cref{thm:pauli-tp2} adapts to qudits of prime dimension.
  For continuous variables, that is, for systems such as a particle on a line, whose position and momentum take real values, the commutation phase of a translation by a distance
  $x$ and a momentum kick by $\hbar k$, where $\hbar$ is the reduced Planck constant and the wavenumber $k$ is a real number measured in the inverse of the unit of length, is $e^{\mathrm{i}kx}$, whose real part $\cos kx$ describes an interference fringe.
  By Part~II~\cite{Duenez-Iovino:2026b}, the fringe observable, which asks whether a photon lands on a bright fringe of an interference pattern of wavenumber $8^n$, for a label $n\in\NN$ of Alice, is not NIP.
  Where do interference fringes of unbounded wavenumber lie with respect to simplicity?
\end{question}

\begin{question}\label{ques:nip-simple}
  In first-order model theory, a simple theory without the independence property is stable.
  Is an observable stable when its real-valued theory is simple?
  By \cref{thm:collapse}, every simple diagonal observable is NIP,
  so for diagonal observables the question is whether those of Part~II that are NIP but unstable,
  such as the comparison of photon numbers, which asks whether Bob’s system has more photons than Alice’s, have a tree property once preparations are mixed.
\end{question}

\begin{question}\label{ques:general}
  Does \cref{thm:collapse} hold for every observable whose responses have the independence property, not only for diagonal ones?
  The proof uses the values $0$ and $1$ of pure responses;
  for general observables they are replaced by two distinct levels, and the averages must be controlled with a gap.
\end{question}

\begin{question}\label{ques:affine-kp}
  Is there an analogue of the Kim--Pillay theorem for affine logic, characterizing simple affine theories by an independence relation on extremal types?
\end{question}

\section*{Historical remarks}

Simple theories were introduced by Shelah \cite{Shelah:1980b} in 1980 with the theory of the countable random graph as a motivating example (see also \cite{Grossberg-Iovino-Lessmann:2002}).
The tree properties of the first and second kind and their dichotomy are treated in Section~III.7 of his monograph \cite{Shelah:1990a}.
Kim and Pillay \cite{Kim-Pillay:1997} proved the
independence theorem and the characterization of simplicity by an independence relation in 1997, and Kim \cite{Kim:1998} proved in 1998 that forking is symmetric in simple theories.
Ben Yaacov \cite{BenYaacov:2003b} extended the theory in 2003 to compact abstract theories, a framework that contains the continuous logic of metric structures.
The Rado graph was described by Ackermann \cite{Ackermann:1937} in 1937 and by Rado \cite{Rado:1964} in 1964;
by a theorem of Erdős and Rényi \cite{Erdos-Renyi:1963} from 1963, it is the almost sure isomorphism type of the random graph on countably many vertices.
Fraïssé \cite{Fraisse:1954a} introduced his limits in 1954, and Witt’s theorem on isometries of forms is in Artin’s \emph{Geometric Algebra} \cite{Artin:1957}.
Vector spaces of countably infinite dimension over finite fields with a nondegenerate alternating form, such as the space of finite-support Pauli labels, are among the Cherlin--Hrushovski smoothly approximable structures \cite{Cherlin-Hrushovski:2003}.
Granger \cite{Granger:1999} studied vector spaces with a generic bilinear form over an arbitrary field in his thesis of 1999 and showed that, over an infinite field carried as a second sort, the theory is not simple;
Chernikov and Ramsey \cite{Chernikov-Ramsey:2016} proved in 2016 that over an algebraically closed field it is NSOP$_1$:
it lacks the $1$-strong order property, as every simple theory does.
The symplectic description of the Pauli group and of stabilizer codes is due to Gottesman \cite{Gottesman:1997} and to Calderbank, Rains, Shor and Sloane \cite{Calderbank-Rains-Shor-Sloane:1998};
the expression of the projector onto the states of a given syndrome as an average over the stabilizer group, behind \cref{prop:sector}, is standard in Gottesman’s stabilizer formalism.
Keisler \cite{Keisler:1999} introduced randomizations in 1999;
Ben Yaacov and Keisler \cite{BenYaacov-Keisler:2009a} treated them as metric structures, and Ben Yaacov \cite{BenYaacov:2013} proved in 2013 that the Keisler randomization of a theory with the independence property has TP$_2$.
Direct integrals and extremal models in affine logic are due to Ben Yaacov, Ibarlucía and Tsankov \cite{BenYaacov-Ibarlucia-Tsankov:2024}, and stability in affine logic to Ben Yaacov and Ibarlucía \cite{BenYaacov-Ibarlucia:2026}.
The first two papers of this series are \cite{Duenez-Iovino:2026a,Duenez-Iovino:2026b}.

\section*{Declarations}
\noindent\emph{Competing interests.}
The authors have no competing interests to
declare.

\noindent\emph{Data availability.}
No data were generated or analyzed in this
work.

\bibliographystyle{amsalpha}
\bibliography{references}

\def\cprime{$'$}
\providecommand{\bysame}{\leavevmode\hbox to3em{\hrulefill}\thinspace}
\providecommand{\MR}{\relax\ifhmode\unskip\space\fi MR }
\providecommand{\MRhref}[2]{%
  \href{http://www.ams.org/mathscinet-getitem?mr=#1}{#2}
}
\providecommand{\href}[2]{#2}
\begin{thebibliography}{BDSW96}

\bibitem[Ack37]{Ackermann:1937}
W.~Ackermann, \emph{Die {Widerspruchsfreiheit} der allgemeinen {Mengenlehre}},
  Math. Ann. \textbf{114} (1937), 305--315.

\bibitem[Art57]{Artin:1957}
E.~Artin, \emph{Geometric {Algebra}}, Interscience, New York, 1957.

\bibitem[BDSW96]{Bennett-DiVincenzo-Smolin-Wootters:1996}
C.~H. Bennett, D.~P. DiVincenzo, J.~A. Smolin, and W.~K. Wootters,
  \emph{Mixed-state entanglement and quantum error correction}, Phys. Rev. A
  \textbf{54} (1996), 3824--3851.

\bibitem[BY03]{BenYaacov:2003b}
I.~Ben~Yaacov, \emph{Simplicity in compact abstract theories}, J. Math. Log.
  \textbf{3} (2003), 163--191.

\bibitem[BY13]{BenYaacov:2013}
Ita\"{\i} Ben~Yaacov, \emph{On theories of random variables}, Isr. J. Math.
  \textbf{194} (2013), 957--1012.

\bibitem[BYI26]{BenYaacov-Ibarlucia:2026}
I.~Ben~Yaacov and T.~Ibarluc{\'i}a, \emph{Stability in affine logic}, Model
  Theory \textbf{5} (2026), 1--34.

\bibitem[BYIT24]{BenYaacov-Ibarlucia-Tsankov:2024}
I.~Ben~Yaacov, T.~Ibarluc{\'i}a, and T.~Tsankov, \emph{Extremal models and
  direct integrals in affine logic}, arXiv:2407.13344, 2024.

\bibitem[BYK09]{BenYaacov-Keisler:2009a}
Ita\"{\i} Ben~Yaacov and H.~Jerome Keisler, \emph{Randomizations of models as
  metric structures}, Confluentes Math. \textbf{1} (2009), no.~2, 197--223.
  \MR{2561997}

\bibitem[CH03]{Cherlin-Hrushovski:2003}
G.~Cherlin and E.~Hrushovski, \emph{Finite {Structures} with {Few} {Types}},
  Annals of Mathematics Studies, vol. 152, Princeton University Press, 2003.

\bibitem[CR16]{Chernikov-Ramsey:2016}
A.~Chernikov and N.~Ramsey, \emph{On model-theoretic tree properties}, J. Math.
  Log. \textbf{16} (2016), 1650009.

\bibitem[CRSS98]{Calderbank-Rains-Shor-Sloane:1998}
A.~R. Calderbank, E.~M. Rains, P.~W. Shor, and N.~J.~A. Sloane, \emph{Quantum
  error correction via codes over {GF(4})}, IEEE Trans. Inform. Theory
  \textbf{44} (1998), 1369--1387.

\bibitem[DDM03]{Dehaene-deMoor:2003}
J.~Dehaene and B.~De~Moor, \emph{Clifford group, stabilizer states, and linear
  and quadratic operations over {GF(2})}, Phys. Rev. A \textbf{68} (2003),
  042318.

\bibitem[DI26a]{Duenez-Iovino:2026a}
E.~Due{\~n}ez and J.~Iovino, \emph{Classification of bipartite observables {I}:
  stable and uniformly stable observables}, preprint; arXiv:2610.11002, 2026.

\bibitem[DI26b]{Duenez-Iovino:2026b}
\bysame, \emph{Classification of bipartite observables {II}: {NIP} observables
  and {Todor\v{c}evi\'c}'s trichotomy}, preprint; arXiv:2610.12908, 2026.

\bibitem[ER63]{Erdos-Renyi:1963}
P.~Erd{\H{o}}s and A.~R{\'e}nyi, \emph{Asymmetric graphs}, Acta Math. Acad.
  Sci. Hungar. \textbf{14} (1963), 295--315.

\bibitem[Fra54]{Fraisse:1954a}
R.~Fra{\"i}ss{\'e}, \emph{Sur l'extension aux relations de quelques
  propri\'et\'es des ordres}, Ann. Sci. \'Ecole Norm. Sup. \textbf{71} (1954),
  363--388.

\bibitem[GIL02]{Grossberg-Iovino-Lessmann:2002}
Rami Grossberg, Jos{\'e}~N. Iovino, and Olivier Lessmann, \emph{A primer of
  simple theories}, Arch. Math. Logic \textbf{41} (2002), no.~6, 541--580.
  \MR{MR1923196 (2003f:03043)}

\bibitem[Got97]{Gottesman:1997}
D.~Gottesman, \emph{Stabilizer codes and quantum error correction}, Ph.D.
  thesis, California Institute of Technology, 1997, arXiv:quant-ph/9705052.

\bibitem[Gra99]{Granger:1999}
N.~Granger, \emph{Stability, simplicity and the model theory of bilinear
  forms}, Ph.D. thesis, University of Manchester, 1999.

\bibitem[Kei99]{Keisler:1999}
H.~Jerome Keisler, \emph{Randomizing a model}, Adv. Math. \textbf{143} (1999),
  no.~1, 124--158. \MR{1680650}

\bibitem[Kei23]{Keisler:2023}
H.~Jerome Keisler, \emph{Model theory for real-valued structures}, Beyond
  First-Order Model Theory, Vol.\ 2 (Jos{\'e} Iovino, ed.), CRC Press, 2023,
  arXiv:2005.11851, pp.~43--101.

\bibitem[Kim98]{Kim:1998}
B.~Kim, \emph{Forking in simple unstable theories}, J. London Math. Soc. (2)
  \textbf{57} (1998), 257--267.

\bibitem[KP97]{Kim-Pillay:1997}
Byunghan Kim and Anand Pillay, \emph{Simple theories}, Ann. Pure Appl. Logic
  \textbf{88} (1997), no.~2-3, 149--164, Joint AILA-KGS Model Theory Meeting
  (Florence, 1995). \MR{MR1600895 (99b:03049)}

\bibitem[LMPZ96]{Laflamme-Miquel-Paz-Zurek:1996}
R.~Laflamme, C.~Miquel, J.~P. Paz, and W.~H. Zurek, \emph{Perfect quantum error
  correcting code}, Phys. Rev. Lett. \textbf{77} (1996), 198--201.

\bibitem[Rad64]{Rado:1964}
R.~Rado, \emph{Universal graphs and universal functions}, Acta Arith.
  \textbf{9} (1964), 331--340.

\bibitem[Rod21]{Rodriguez:2021}
Jos{\'e} Rodr{\'i}guez, \emph{$\varepsilon$-weakly precompact sets in {B}anach
  spaces}, 2021, arXiv:2102.01546.

\bibitem[Ros74]{Rosenthal:1974}
H.~P. Rosenthal, \emph{A characterization of {B}anach spaces containing
  $l\sp{1}$}, Proc. Nat. Acad. Sci. U.S.A. \textbf{71} (1974), 2411--2413.
  \MR{358307}

\bibitem[She80]{Shelah:1980b}
S.~Shelah, \emph{Simple unstable theories}, Annals of Math. Logic \textbf{19}
  (1980), 177--203.

\bibitem[She90]{Shelah:1990a}
S.~Shelah, \emph{Classification theory and the number of nonisomorphic models},
  second ed., Studies in Logic and the Foundations of Mathematics, vol.~92,
  North-Holland Publishing Co., Amsterdam, 1990. \MR{MR1083551 (91k:03085)}

\end{thebibliography}

\end{document}